\long\def\comment#1\endcomment{}
\def\shazero{\sha\rlap{${}^{\circ}$}}
\def\smallshazero{\smallsha^{\circ}}
\def\omegabar{\overline{\omega}}
\let\ge\geqslant
\let\le\leqslant
\DeclareSymbolFont{bbold}{U}{bbold}{m}{n}
\DeclareMathSymbol{\bbmu}{\mathord}{bbold}{"16}
\begin{document}

\title[Growth of $\smallsha$ in towers for isogenous curves]{Growth of $\sha$ in towers for isogenous curves}

\author{Tim and Vladimir Dokchitser}
\date{Christmas Day 2012}
\address{Dept of Mathematics, University Walk, Bristol BS8 1TW, United Kingdom}
\email{tim.dokchitser@bristol.ac.uk}
\address{Emmanuel College, Cambridge CB2 3AP, United Kingdom}
\email{v.dokchitser@dpmms.cam.ac.uk}
\subjclass[2000]{11G07 (11G05, 11G10, 11G40, 11R23)}

\begin{abstract}
We study the growth of $\smallsha$ and $p^\infty$-Selmer groups 
for isogenous abelian varieties in towers 
of number fields, with an emphasis on elliptic curves. 
The growth types are usually exponential, 
as in the setting of `positive $\mu$-invariant' in Iwasawa theory
of elliptic curves. The towers we consider are
$p$-adic and $l$-adic Lie extensions for $l\ne p$,
in particular cyclotomic and other $\Z_l$-extensions.
\end{abstract}

\llap{.\hskip 10cm} \vskip -14mm
\maketitle


\section{Introduction}

The algebraic side of the Iwasawa theory of elliptic curves is 
concerned with the study of
the structure of Selmer groups in cyclotomic $\Zp$-extensions 
of $\Q$, and other towers of number fields. 
%
The aim of the present paper is to systematically study 
the behaviour of Selmer groups for 
isogenous elliptic curves $E, E'$ or abelian varieties.
The isogeny makes it possible to bypass Iwasawa theory
and, in particular, avoid any assumptions on the reduction types.
Moreover, it allows us to work with $p^\infty$-Selmer groups in general 
\hbox{$l$-adic} towers, both for $l\!=\!p$ and $l\!\ne\! p$.
For instance, we construct elliptic cur\-ves whose $p$-primary part of 
the Tate-Shafarevich group goes off to infinity in all 
$l$-cyc\-lotomic extensions of~$\Q$,
in contrast to Washington's theorem that the $p$-part of the ideal
class group is bounded in these extensions for $l\ne p$. 

We show that in the $n$th layer of the $p$-cyclotomic tower of $\Q$,
the quotient
$|\sha_E[p^\infty]|/|\sha_{E'}[p^\infty]|$ (if finite)
is $p^{\,\mu p^n+O(1)}$, as though it came from 
an Iwasawa module with $\lambda$-invariant 0 and $\mu$-invariant $\mu$,
except that $\mu$ may be fractional when $E$ has potentially supersingular 
reduction at $p$. 
Our formula for $\mu$ is explicit and surprisingly simple, and there 
is similar behaviour in other $p$-adic and $l$-adic towers.
It would be interesting to understand~the structure theory of the associated 
Selmer groups that gives rise to such growth.

Our main results for $\Z_l$-extensions and general Lie groups are as follows:

\def\sfrac#1#2{\kern.05em\raise.5ex\hbox{\rm\smaller[2]#1}%
  \kern-.2em/\kern-.1em\lower.25ex\hbox{\rm\smaller[2]#2}}
    
\begin{theorem}
\label{imain}
Let $\cup_n \Q(l^n)$ be the $l$-cyclotomic tower,
$p$ a prime and $E, E'$ two isogenous elliptic curves over $\Q$. 
Then for all large enough $n$,
$$
  \frac{|\shazero_{E/\Q(l^n)}[p^\infty]|}{|\shazero_{E'\!/\Q(l^n)}[p^\infty]|}
     = p^{\,\mu l^n + \nu + \epsilon(n)},
$$
for some $\nu\in \Z$, some $|\epsilon(n)|\le 8\sfrac12$ and $\mu\in\frac1{12}\Z$ given by
$$
  \mu = \ord_p\frac{\Omega_{E'}}{\Omega_E} + 
  \left\{\begin{array}{llllll}
     0&{\text{if $\,l\ne p\>$ or $\>\ord_p(j_E)<0$,}}\\[3pt]
     {\frac1{12}{\ord_p(\frac{\Delta_{E'}}{\Delta_E})}}&{\text{if $\,l=p\>$ and $\>\ord_p(j_E)\ge 0$.}}\cr
  \end{array}
  \right.
$$
If $l\ne p$ or $l$ does not divide the degree of the isogeny $E\to E'$, then
$\epsilon(n)=0$.
\end{theorem}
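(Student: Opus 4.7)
The plan is to reduce $|\shazero_{E/\Q(l^n)}[p^\infty]|/|\shazero_{E'/\Q(l^n)}[p^\infty]|$ to a product of local invariants via the isogeny invariance of the Birch--Swinnerton-Dyer quotient, and then to analyse the $p$-adic valuation of each local factor as $n\to\infty$.

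Since $E$ and $E'$ are isogenous they have the same Mordell--Weil rank over every number field, and the classical unconditional isogeny invariance of the BSD formula (Cassels, Tate) gives, whenever the Tate--Shafarevich groups are finite,
\[
\frac{|\sha_{E/K_n}|}{|\sha_{E'/K_n}|} = \frac{\Omega_{E'/K_n}}{\Omega_{E/K_n}}\cdot\prod_v\frac{c_v(E'/K_n)}{c_v(E/K_n)}\cdot\frac{|E(K_n)_{\mathrm{tors}}|^2\,\mathrm{Reg}_{E'/K_n}}{|E'(K_n)_{\mathrm{tors}}|^2\,\mathrm{Reg}_{E/K_n}},
\]
with $K_n=\Q(l^n)$, together with an analogous identity for $\shazero[p^\infty]$ that avoids the finiteness hypothesis. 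By Merel's uniform boundedness, the torsion ratio stabilises and is absorbed into $\nu$; the $p$-part of the regulator ratio is bounded by a power of $\deg(\phi)$ and is a $p$-adic unit unless $l=p$ divides $\deg(\phi)$ --- precisely the condition under which $\epsilon(n)$ may fail to vanish. Only the finitely many rational primes of bad reduction contribute to the Tamagawa product, and each splits into boundedly many primes of $K_n$ whose Kodaira types are eventually stable, so $\ord_p\prod_v c_v(E'/K_n)/c_v(E/K_n)$ is eventually constant in $n$ and is likewise absorbed into $\nu$.

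The period ratio carries the exponential growth. Fix global minimal Weierstrass models of $E,E'$ over $\Q$ with N\'eron differentials $\omega_E,\omega_{E'}$. Because $K_n/\Q$ is unramified away from $l$ and $\infty$, these differentials remain N\'eron-minimal at every prime of $K_n$ not lying above $p$, and multiplicativity of the period in field extensions gives
\[
\ord_p\frac{\Omega_{E'/K_n}}{\Omega_{E/K_n}} = [K_n:\Q]\cdot\ord_p\frac{\Omega_{E'}}{\Omega_E} + (\text{change-of-model correction above }p).
\]
The correction vanishes if $l\ne p$ (since $p$ is then unramified in $K_n$) or if $l=p$ and $E$ has multiplicative reduction at $p$ (a Tate curve cannot acquire good reduction), yielding the first case of the formula for $\mu$. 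If $l=p$ and $\ord_p(j_E)\ge 0$, the Weierstrass model must be rescaled by some $u$ with $\ord_p(u)=\tfrac{1}{12}\ord_p(\Delta)$, producing the extra $\tfrac{1}{12}\ord_p(\Delta_{E'}/\Delta_E)$ --- the $\tfrac{1}{12}$ coming from $\Delta\mapsto u^{12}\Delta$. Since $[K_n:\Q] = l^n$ up to a bounded factor, this is the leading $\mu l^n$.

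The main obstacle will be the wildly ramified case $l=p$, $\ord_p(j_E)\ge 0$: one must explicitly describe the N\'eron minimal model at primes above $p$ in $K_n$ for every $n$ and verify that the change-of-model correction is exactly $\tfrac{1}{12}[K_n:\Q]\,\ord_p(\Delta_{E'}/\Delta_E)$ plus a bounded error uniform in $n$. This step also establishes the integrality of $12\mu$ and, together with a careful audit of the $l$-power regulator index $[E'(K_n):\phi E(K_n)]$, produces the explicit bound on $|\epsilon(n)|$ asserted in the theorem and its vanishing when $l\ne p$ or $l\nmid\deg(\phi)$.
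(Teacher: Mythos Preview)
Your overall strategy---apply isogeny invariance of BSD and analyse the local factors term by term---is the paper's strategy as well, but two of the steps you treat as routine are in fact the deep inputs, and your proposed justifications for them are incorrect.

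First, Merel's theorem does not help with torsion: the degrees $[\Q(l^n):\Q]=l^n$ are unbounded, so uniform boundedness over fields of bounded degree says nothing. What is needed is the Imai--Serre theorem that an abelian variety has finite $p$-power torsion over the cyclotomic $\Z_l$-extension of a number field; this is what the paper invokes (Corollary~\ref{cyctorsstab}).

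Second, and more seriously, your treatment of the regulator term is wrong. The $p$-adic valuation of $\mathrm{Reg}_{E'/K_n}/\mathrm{Reg}_{E/K_n}$ is \emph{not} bounded by a fixed power of $\deg\phi$, nor is it a $p$-adic unit when $l\ne p$: if $\deg\phi=p^k$ then $|\ord_p(\mathrm{Reg}_{E'/K_n}/\mathrm{Reg}_{E/K_n})|$ can be as large as $k\cdot\rk E/K_n$, and the index $[E'(K_n):\phi E(K_n)]$ is a power of $p$, not of $l$. To absorb this into a constant $\nu$ you must know that the rank is bounded in the tower, and that is Kato's theorem on the cotorsion of the Selmer group over the cyclotomic $\Z_p$-extension---a genuinely hard result that you never invoke. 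The paper sidesteps regulators by using the Selmer formulation of isogeny invariance (Theorem~\ref{thmsel}), in which the regulator quotient is replaced by $|\Sel_{p^\infty}^{\div}(E/K_n)[\phi]|\big/|\Sel_{p^\infty}^{\div}(E'/K_n)[\phi^t]|$; this is then shown to stabilise (Corollary~\ref{corselellcyc}) precisely by appealing to Kato. Without that input there is nothing to stop this term from growing like $p^{\rk E/K_n}$ and destroying the asserted $p^{\mu l^n+\nu+\epsilon(n)}$ shape.

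A smaller point: your claim that Kodaira types, and hence Tamagawa quotients, eventually stabilise is false in the wild potentially supersingular case at $l=p\in\{2,3\}$ (the paper exhibits the $3$-isogeny $243A1\to 243A2$, where the quotient oscillates between $3$ and $1/3$); this fluctuation is part of what $\epsilon(n)$ absorbs, rather than $\nu$.
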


\noindent
Here and throughout the paper 
$\Q(l^n)$ denotes the degree $l^n$-extension of $\Q$ in the
cyclotomic $\Z_l$-extension. 
We write $\Delta_E, \Delta_{E'}$ for the minimal discriminants%
\footnote{If the base field is not $\Q$, there may be no global minimal model.
We then regard $\Delta_E, \Delta_{E'}$ as ideals that have minimal valuation 
at every prime.} 
and $j_E,j_{E'}$ for the $j$-invariants of the two curves,
$\Omega_E, \Omega_{E'}$ for the Birch--Swinnerton-Dyer periods (see \S\ref{sper}),
and $\shazero\ $ for the Tate-Shafarevich group
$\sha$ modulo its divisible part $\sha^{\div}$. 
Write also $\Sel_{p^\infty}$ for the $p^\infty$-Selmer group, 
$\Sel_{p^\infty}^{\div}$ for its divisible part,
$\rk_p$ for its $\Zp$-corank\footnote{Thus, $\rk_p A/K=\rk A/K+t$ 
if $\smallsha^{\div}_{A/K}\iso(\Q_p/\Z_p)^t$. Of course, conjecturally, $t=0$,
$\smallsha=\smallshazero$ and $\smash{\Sel_{p^\infty}^{\div}(A/K)}\iso A(K)\tensor\Q_p/\Z_p$. 
In any case, $\smallshazero$ is a torsion abelian group
all of whose $p$-primary parts are finite.}, 
$c_v$ for the Tamagawa number at $v$, and $f_{K_v/\Q_p}$ for the residue
degree of $K_v/\Q_p$. The big $O$ notation refers to the parameter $n$.

\par\noindent\vskip-1pt\par\noindent

\begin{theorem}
\label{iZlmain}
Let $K$ be a number field, $p$ a prime number, 
and $K_\infty\!=\!\cup_n K_n$ a $\Zl$-extension of $K$,
with $[K_n:K]=l^n$. Let $\phi: E\to E'$ be an isogeny of 
elliptic curves over~$K$, with dual isogeny $\phi^t$. Then
$$
\frac{|\Sel^{\div}_{p^\infty}(E/K_n)[\phi]|}
     {|\Sel^{\div}_{p^\infty}(E'\!/K_n)[\phi^t]|} 
\frac{|\shazero_{E/K_n}[p^\infty]|}{|\shazero_{E'\!/K_n}[p^\infty]|} 
  = 
  p^{\,\mu l^n+O(1)},
  \quad 
  \mu=\ord_p \bigl(\frac{\Omega_{E'\!/K}}{\Omega_{E/K}}\bigr)+\sum_{v} \mu_v,
$$
where the sum is taken over primes $v$ of bad reduction for $E/K$, and
\begingroup\smaller[1]
$$
  \quad\mu_v = \left\{
    \begin{array}{llll}
    \ord_p\frac{c_v(E'\!/K)}{c_v(E/K)} & \text{if } v\text{ is totally split in $K_\infty/K$},& \cr
    \frac{f_{K_v/\Q_p}}{12}{\,\ord_v(\frac{\Delta_{E'}}{\Delta_E})}& \text{if $l\!=\!p,$\ $v|p$ is ramified in $K_\infty/K$
      and $\ord_v j_E\!\ge\!0$},\cr
    0 & \text{otherwise.} \\[3pt]
    \end{array}
  \right.
$$
\endgroup
If $\rk_p E/K_n$ is bounded, then 
\smash{$
  \frac{|\smallshazero_{E/K_n}[p^\infty]|}{|\smallshazero_{E'\!/K_n}[p^\infty]|}
     = p^{\,\mu l^n + O(1)} 
$}
as well.
\end{theorem}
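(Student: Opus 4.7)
The strategy is to apply at each finite level $K_n$ an ``isogeny invariance'' identity --- a $p$-part Cassels-Tate formula for isogenous curves, to be established earlier in the body of the paper --- which expresses the left-hand side as an explicit product of local factors, and then to analyze the growth of that product as $n\to\infty$. Concretely, over any number field $F$ the formula I want to start from reads
$$
\frac{|\Sel^{\div}_{p^\infty}(E/F)[\phi]|}{|\Sel^{\div}_{p^\infty}(E'/F)[\phi^t]|}\cdot\frac{|\shazero_{E/F}[p^\infty]|}{|\shazero_{E'/F}[p^\infty]|} \;=\; \frac{\Omega_{E'/F}}{\Omega_{E/F}}\cdot\prod_{w} C_w(\phi)\cdot(\text{torsion}),
$$
where $w$ runs over finite places of $F$ and $C_w(\phi)\in\Q^\times$ depends only on the completion $F_w$, on $\phi$, and on the choice of N\'eron differentials. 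Taking $F=K_n$ and applying $\ord_p$ splits the problem into a period contribution, a bounded torsion contribution, and the growth of $\sum_w\ord_p C_w$.

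The period ratio is easy: since $\Omega_{E/K_n}$ is a product over archimedean places of $K_n$ and each archimedean place of $K$ has $l^n$ places of $K_n$ above it, all contributing equal local periods, one obtains $\ord_p(\Omega_{E/K_n}/\Omega_{E'/K_n}) = l^n\ord_p(\Omega_{E/K}/\Omega_{E'/K}) + O(1)$, which supplies the first summand of $\mu$.

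The finite-place product I would then control by grouping places $w$ of $K_n$ by their restriction $v$ to $K$. For $v$ of good reduction away from $p$ every $C_w$ is $p$-adically trivial. For the remaining finitely many $v$, let $D_v\subset\Z_l$ denote the decomposition group in $\Gal(K_\infty/K)$: if $D_v=0$ (so $v$ is totally split) there are $l^n$ places of $K_n$ above $v$, each with completion $K_v$, contributing $l^n\ord_p(c_v(E')/c_v(E))$ once one identifies $C_v$ with the Tamagawa ratio in the unramified split case; if $D_v$ has finite nontrivial index the number of places above $v$ is bounded and their joint contribution is $O(1)$; and if $v$ is ramified (necessarily $v|l$) then from some level on $v$ has a single place above it in $K_n$, and that single $C_w$ must be analyzed directly.

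The main obstacle is precisely this last case when $l=p$: one has to track the local factor $C_w$ along the tower of ramified local extensions $K_{n,w}/K_v$. In the potentially multiplicative case ($\ord_v j_E<0$) the Tate parametrisation makes $\ord_p C_w$ stay bounded, accounting for $\mu_v=0$ there. In the potentially good case the formal-group part of the local isogeny factor is governed by the differential comparison $\phi^*\omega_{E'}=\lambda\omega_E$ together with the relation between $\ord_v\lambda$ and $\ord_v(\Delta_E/\Delta_{E'})$; combined with the residue degree $f_{K_v/\Q_p}$ and the ramification index of $K_{n,w}/K_v$, this should yield a contribution of $l^n\cdot\frac{f_{K_v/\Q_p}}{12}\ord_v(\Delta_{E'}/\Delta_E)+O(1)$. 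Verifying that this scaling is exactly linear in $l^n$ --- with no sublinear error coming from the N\'eron model over $K_{n,w}$ --- is where I expect the bulk of the technical work. Finally, the last assertion of the theorem follows because bounded $\rk_p E/K_n$ forces both $\Sel^{\div}_{p^\infty}(E/K_n)[\phi]$ and $\Sel^{\div}_{p^\infty}(E'/K_n)[\phi^t]$ to have orders bounded independently of $n$, so their ratio is absorbed into the $O(1)$ term and the same growth rate holds for the $\shazero$-ratio alone.
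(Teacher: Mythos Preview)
Your proposal is correct and matches the paper's approach: the paper applies the isogeny-invariance formula (Theorem~\ref{main}), bounds torsion via Proposition~\ref{torsstab}, and carries out exactly your trichotomy on the closed subgroups $I_v\subset D_v\subset\Z_l$, with the ramified potentially-good case at $v\mid p$ handled by the discriminant--differential computation (Theorem~\ref{ommain} and Proposition~\ref{gammamain}) that you correctly anticipate as the technical core. One small sharpening: the archimedean contribution needs no $O(1)$, since $\Z_l$ is torsion-free and hence every archimedean place of $K$ is totally split in $K_\infty/K$; also, your middle case should be read as ``$D_v\neq 0$ and $I_v=0$'' rather than merely ``$D_v$ of finite nonzero index'', since the ramified case also has $D_v$ of finite index in $\Z_l$.
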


\vskip 1mm

\begin{theorem}
\label{iLie}
Suppose $K$ is a number field and $K_\infty/K$ a Galois extension 
whose Galois group is a $d$-dimensional $l$-adic Lie group; 
write $K_n/K$ for its $n$th layer in the natural $l$-adic Lie filtration. 
Let $p$ be a prime number, and $\phi: A\to A'$ an isogeny 
of abelian varieties over $K$, with dual $\phi^t: A'^t\to A^t$. 

(1) If $A$ is an elliptic curve, then there is $\mu\in\Q$ such that
$$
\frac{|\Sel^{\div}_{p^\infty}(A/K_n)[\phi]|}
     {|\Sel^{\div}_{p^\infty}(A'\!/K_n)[\phi^t]|} 
\frac{|\shazero_{A/K_n}[p^\infty]|}{|\shazero_{A'\!/K_n}[p^\infty]|} 
\!=\! 
p^{\,\mu l^{dn} \!+\! O(l^{(d-1)n})}.
$$

(2) If either $A, A'$ are semistable abelian varieties or they are
elliptic cur\-ves that do not have additive potentially supersingular 
reduction at primes~$v|p$ that are infinitely ramified in $K_\infty/K$,
then there are constants \hbox{$\mu_1,...,\mu_{d-1}\!\in\!\Q$} such that for all 
sufficiently large $n$,
$$
\frac{|\Sel^{\div}_{p^\infty}(A/K_n)[\phi]|}
     {|\Sel^{\div}_{p^\infty}(A'^t\!/K_n)[\phi^t]|} 
\frac{|\shazero_{A/K_n}[p^\infty]|}{|\shazero_{A'\!/K_n}[p^\infty]|} 
= p^{\,\mu l^{dn} + \mu_1 l^{(d-1)n} + \ldots + \mu_{d-1}l^n + O(1)}.
$$
If $A(K_n)[p^\infty]$ is bounded, $O(1)$ may be replaced by a constant $\mu_d\in\Q$.

(3) $\vphantom{\int^{X^A}}$
If $\rk_p A/K_n=O(l^{(d-1)n})$ in (1), respectively $O(1)$ in (2),
then all the conclusions of (1), respectively (2), hold 
for $\vphantom{\int^{X^A}}$\smash{$\frac{|\smallshazero_{A/K_n}[p^\infty]|}
  {|\smallshazero_{A'\!/K_n}[p^\infty]|}$} as well. 
\end{theorem}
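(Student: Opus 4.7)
The plan is to apply, at each finite layer $K_n$, the isogeny-invariant Cassels--Tate quotient formula (in the $p$-primary form underlying the proof of Theorem~\ref{iZlmain}). This rewrites
$$
  Q_n := \frac{|\Sel^{\div}_{p^\infty}(A/K_n)[\phi]|}{|\Sel^{\div}_{p^\infty}(A'/K_n)[\phi^t]|} \cdot \frac{|\shazero_{A/K_n}[p^\infty]|}{|\shazero_{A'/K_n}[p^\infty]|},
$$
up to bounded torsion factors, as a product of an archimedean period ratio and local Tamagawa ratios at primes of bad reduction. Taking $\ord_p$, the period term is linear in $[K_n:K]=l^{dn}$, and the local sum splits over bad primes $v$ of $K$ and primes $w\mid v$ of $K_n$.

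The key geometric input is orbit counting on $G:=\Gal(K_\infty/K)$. For a bad prime $v$ let $H_v\subset G$ be its decomposition subgroup, a closed Lie subgroup of some dimension $d_v\in\{0,1,\dots,d\}$. Standard $l$-adic Lie theory gives $\#\{w\mid v\text{ in }K_n\}=[G:H_vG_n]\sim c_v\,l^{(d-d_v)n}$ for large $n$. Totally split bad primes ($d_v=0$) contribute a term of order $l^{dn}$ that combines with the period ratio to yield the leading coefficient $\mu$ exactly as in Theorem~\ref{iZlmain}; primes $v\mid p$ with $v$ ramified in $K_\infty/K$ contribute to $\mu$ through the period (whose behaviour at ramified places of residue characteristic $p$ scales with the ramification growth, producing the $\frac{f_v}{12}\ord_v(\Delta_{E'}/\Delta_E)$ term when $l=p$). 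Every other bad prime has $d_v\ge 1$ and contributes only $O(l^{(d-1)n})$, since there are finitely many such $v$ and each Tamagawa ratio is bounded at worst polynomially in~$n$. This proves part~(1).

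For part~(2) one needs a genuine polynomial expansion, not just a bound. Under the semistable or good-ordinary-at-$p$ hypothesis, for each $v$ and each $w\mid v$ the local contributions $\ord_p\bigl(c_w(A'/K_{n,w})/c_w(A/K_{n,w})\bigr)$ and, when $l=p$ and $v\mid p$, the local period correction admit explicit polynomial expansions in the invariants $e(K_{n,w}/K_v)$, $f(K_{n,w}/K_v)$ of the local tower, computed via the Tate curve in the multiplicative case and via N\'eron models in the ordinary case. By the Lie structure of $H_v$, these invariants, and the number of primes $w\mid v$, all admit polynomial-in-$l^n$ expansions. Summing over $w\mid v$ and over $v$, and combining with the period, yields the required expansion $\mu l^{dn}+\mu_1 l^{(d-1)n}+\ldots+\mu_{d-1}l^n+O(1)$; the boundedness of $A(K_n)[p^\infty]$ stabilises the residual torsion factors and upgrades the $O(1)$ to a constant $\mu_d$. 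Part~(3) is formal: since $\Sel^{\div}_{p^\infty}/\sha^{\div}_{p^\infty}\iso A(K_n)\tensor\Qp/\Zp$, the discrepancy between $\shazero[p^\infty]$ and $\Sel^{\div}_{p^\infty}$ is of size $p^{O(\rk_p A/K_n)}$, and the hypothesis on $\rk_p$ is precisely what is needed to absorb it into the error of the corresponding part.

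The principal obstacle is the expansion step in part~(2): granted the good behaviour of $\#\{w\mid v\}$, one must show that the Tamagawa and period contributions themselves admit honest polynomial expansions in $l^n$, uniformly over $w$. This requires the semistable/ordinary hypothesis, which ensures that Kodaira symbols and component groups evolve predictably as the local extensions $K_{n,w}/K_v$ grow up the $l$-adic Lie tower. The potentially-additive supersingular case at a ramified prime above $p$ is precisely where the local Tamagawa--period bookkeeping ceases to be polynomial in $l^n$, which is why that case is excluded from~(2) and appears only through the $O(l^{(d-1)n})$ error in~(1).
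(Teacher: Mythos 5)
Your overall strategy matches the paper's: apply the isogeny-invariance of the Birch--Swinnerton-Dyer quotient at each layer, decompose into archimedean, period, Tamagawa and minimal-differential terms, and exploit the $l$-adic Lie structure to get $e_{v,n}, f_{v,n}$ and $n_{v,n}$ scaling like fixed powers of $l^n$. That is precisely how the paper proceeds, and your orbit-counting step and your part~(3) reduction via $p^{O(\rk_p A/K_n)}$ are both on the nose.

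There are, however, two places where your argument would not close as stated. First, you conflate the global period ratio $\Omega_{A'\!/K}/\Omega_{A/K}$ (a fixed rational number whose only $n$-dependence is the exponent $[K_n:K]$) with the local minimal-differential terms $|\omega_v^{\min}/\omega_w^{\min}|_w$ that enter through Lemma~\ref{lemper}. It is the latter, quantified by $\Om_\phi$ in Definition~\ref{defom} and studied in Theorem~\ref{ommain}, that ``scales with the ramification growth'' and produces the $\frac{1}{12}\ord_v(\Delta_{E'}/\Delta_E)$ contribution at ramified $v\mid p$; the period $\Omega$ does not change. If you try to push your part~(1) argument literally as written, you will find no mechanism inside ``the period'' that produces that term, and you must instead track $\Om_\phi(K_{n,w})$ separately, as the paper does.

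Second, in part~(2) you assert that the local Tamagawa contributions ``admit explicit polynomial expansions \dots computed via the Tate curve in the multiplicative case and via N\'eron models in the ordinary case.'' What is actually both true and needed is the much sharper statement that each ratio $\ord_p\bigl(c_w(A'/K_n)/c_w(A/K_n)\bigr)$ is \emph{eventually constant} along the local tower, and likewise the minimal-differential term stabilises (indeed vanishes once $v$ is at most finitely ramified, or once one excludes additive potentially supersingular reduction). Tate curves only handle multiplicative reduction; N\'eron models alone do not give you eventual constancy in the additive potentially ordinary and additive potentially multiplicative cases, which your hypothesis allows. The paper proves stabilisation in all these cases via the classification of changes of local invariants under isogeny in~\cite{isogloc} (Theorem~\ref{tammain}, Proposition~\ref{gammamain}(1), Lemma~\ref{semom0}), and for semistable abelian varieties via the Betts--Dokchitser growth formula for component groups. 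Without some substitute for those inputs, your part~(2) argument has a genuine gap at additive primes.
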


\begin{remarks}
\label{remarks}
\noindent\par

\par\smallskip\noindent
{\bf(1)} If $p\nmid\deg\phi$, then the $p^\infty$-Selmer groups and 
the $p$-part of $\sha$ cannot have $\phi$-torsion, so the corresponding 
quotients in Theorems \ref{imain}-\ref{iLie} are trivial.
The results can be reduced to those for isogenies of $p$-power degree.

\par\smallskip\noindent
{\bf(2)}
In Theorems \ref{imain}, \ref{iZlmain},
the quotient \smash{$\frac{\Omega_{E'}}{\Omega_{E}}$} is a rational number
(Lemma~\ref{perrat}).
For $p$-iso\-genous curves over $\Q$ it is $1$ or $p^{\pm 1}$,
see \cite{isogloc} Thm 8.2.
%
The term $\frac1{12}{\ord_p(\frac{\Delta_{E'}}{\Delta_E})}$ is 0 unless $E$ 
has additive potentially supersingular reduction at $p$, 
see \cite{isogloc} Table~1.
In this exceptional case, $\mu$ does not have to be an integer, see 
Example~\ref{expotsup}.

\par\smallskip\noindent
{\bf(3)} Suppose $\Gal(K_\infty/K)\!=\!\Gamma\!\iso\!\Z_p$. If
the dual $p^\infty$-Selmer group of $E/K_\infty$ is a torsion 
$\Zp[[\Gamma]]$-module, then the invariant $\mu$ of 
Theorem \ref{iZlmain} is $\mu(E)\!-\!\mu(E')$, the difference of
the classical $\mu$-invariants of the two Selmer groups over~$K_\infty$.
In this setting, Theorem \ref{iZlmain} is equivalent to a theorem
of Schneider (for odd~$p$), see \cite{Sch, PeV}.

\par\smallskip\noindent
{\bf(4)} 
Suppose $E/\Q$ has good ordinary reduction at $p$. 
Then the dual $p^\infty$-Sel\-mer group of $E$ over the $p$-cyclotomic 
extension over $\Q$ is a torsion 
Iwasawa module, by Kato's theorem \cite{KatP}. 
A conjecture of Greenberg (\cite{Gre2}~Conj~1.11) asserts that
the isogeny class of $E$ contains a curve of $\mu$-invariant 0.
Granting the conjecture, Theorem \ref{imain} implies that
\begin{enumerate}
\item[(i)]
In the isogeny class of $E/\Q$ the curve $E_m$ with the largest period
$\Omega_{E_m}$ has $\mu$-invariant~0 at all primes of good ordinary
reduction.
\item[(ii)]
$E$ has $\mu$-invariant $\ord_p\frac{\Omega_{E_m}}{\Omega_E}$ at $p$.
\end{enumerate}
Thus, the theorem provides a conjectural formula for the $\mu$-invariant.
We note here that Greenberg's conjecture is known not to hold in general 
over number fields, see \cite{DriS}.
%



\par\smallskip\noindent
{\bf(5)} 
Greenberg (see \cite{Gre} Exc. 4.3--4.5) 
has observed that if $\phi: E\to E'$ is a $p$-isogeny over $K$ 
whose kernel is $\bbmu_p$, then the map
$$
  K^\times/K^{\times p} \iso H^1(\Gal(\bar K/K),\bbmu_p)\lar H^1(\Gal(\bar K/K),E[p])
$$
induced by the inclusion $\bbmu_p\subset E[p]$ gives a way to construct 
classes in the $p$-Selmer group of $E$. The units of $K$ contribute 
to the Selmer group, and the rank of the unit group is $\!\sim\![K\!:\!\Q]$.
In particular, one can exhibit \hbox{$\mu$-like} growth of $\Sel_p(E)$ 
in towers $K_n/K$. It would be interesting to 
similarly explain the Selmer growth
in Theorems \ref{imain}--\ref{iLie} for $p$-power isogenies 
with arbitrary kernels.

\par\smallskip\noindent
{\bf(6)} 
By a theorem of Washington \cite{Was}, for $p\ne l$ the $p$-part of the ideal
class group is bounded in the $l$-cyclotomic tower. 
Theorem \ref{iZlmain} provides examples of elliptic curves over $\Q$
for which the analogous statement for the Tate-Shafarevich group fails,
see e.g. Example \ref{11a3ord}.

In the opposite direction, Lamplugh \cite{Lam} has recently proven 
the following analogue of the theorem of Washington for elliptic curves $E/\Q$ 
with complex multiplication by the ring of integers of an imaginary quadratic 
field~$K$. Let $p>3$, $l>3$ be distinct primes of good reduction of $E$
that split in $K/\Q$. 
Lamplugh proves that if $K_n$ is the $n$th layer of the unique 
$\Z_l$-extension of $K$ unramified outside one of the factors of $l$ in $K$,
then the $p^\infty$-Selmer group 
of $E$ over $K_n$ stabilises as $n\to\infty$.
%


\par\smallskip\noindent
{\bf(7)} 
The constants $\mu$ and $\mu_1,...,\mu_d$ in Theorem \ref{iLie}
can be made explicit, as in Theorem \ref{iZlmain}.
Following the proof of Theorem \ref{Zlmain}, this requires the knowledge
of the decomposition and inertia groups at bad primes; the other ingredients 
are computed in Proposition \ref{gammamain}.
\end{remarks}

\begin{example}[Ordinary reduction]
\label{11a3ord}
Let us show that the curves $11A1, 11A2$ 
have unbounded 5-primary part of $\sha$ in the
cyclotomic $\Z_l$-extension of $\Q$ for every prime $l$. 
There are 5-isogenies
$$
  11A2 \lar 11A1 \lar 11A3,
$$
and $\Omega_{11A3}=5\Omega_{11A1}=25\Omega_{11A2}=6.34604...$. 
So, by Theorem \ref{imain}, for every~$l$ there are $\nu_l, \nu'_l\in\Z$ 
such that
$$
  |\sha_{11A2/\Q(l^n))}[5^\infty]| \ge 25^{l^n - \nu_l}, \qquad
  |\sha_{11A1/\Q(l^n))}[5^\infty]| \ge 5^{l^n - \nu'_l}.
$$

A standard computation with cyclotomic Euler characteristics 
(e.g. as in \cite{iwacomp} \S3.11) shows that 
for every ordinary prime $l$ for which $a_l\ne 1$, 
e.g. \hbox{$l=3,7,13,17,...$}, the curves have rank 0 over $\Q(l^n)$, for all $n\ge 1$.
For such primes $\nu_l, \nu'_l$ can be taken to be $1$ and the number 
of primes above 11 in $\bigcup_n\Q(l^n)$, respectively\footnote
{So $\nu'_l$ is almost always 1 as well; for $l<10^7$ the only exception is $l=71$.}. 
For $l=5$, these bounds are exact, as $\sha_{11A3/\Q(5^n)}$ 
is known to have trivial 5-primary part for all $n\ge 1$.
%
\end{example}

\begin{example}[Potentially supersingular reduction]
\label{expotsup}
Let $E/\Q$ be an elliptic curve with good supersingular reduction at $p$,
and $K_n=\Q(p^n)$, the $n$th layer in the $p$-cyclotomic tower.
By a theorem of Kurihara \cite{Kur}, under suitable hypothesis,
$$
  |\sha_{E/K_n}[p^\infty]| = p^{\lfloor\mu p^n - \frac 12\rfloor}, \qquad
  \mu = \frac{p}{p^2-1}.
$$
Note that such curves cannot have a $p$-isogeny, by a theorem of 
Serre\linebreak(\cite{SerP}~Prop. 12). In contrast, elliptic curves over $\Q$ with
additive potentially supersingular reduction at $p$ can have a $p$-isogeny,
and there are examples for which 
$$
  |\sha_{E/K_n}[p^\infty]| \ge p^{\,\mu p^n + \nu} 
$$
with $\mu>1$. For instance, there is a 9-isogeny $\phi: 54A2\to54A3$.
These curves have potentially supersingular reduction at $p=3$, and
$$
  \Omega_{54A3}=9\,\Omega_{54A2}, \quad \Delta_{54A2}=-2^93^{11}, \quad 
  \Delta_{54A3}=-2\cdot 3^3.
$$
By Theorem \ref{imain}, there is a constant $\nu$ such that for all 
large enough $n$,
$$
  |\sha_{54A2/K_n}[3^\infty]| \ge  
  \frac{|\sha_{54A2/K_n}[3^\infty]|}{|\sha_{54A3/K_n}[3^\infty]|} =  
  3^{\,3^n\!\mu+\nu}, \quad\>\> \mu 
    = 2-\frac{11-3}{12} = \frac 43.
$$
\end{example}

\begin{example}[False Tate curve tower]
To illustrate Theorem \ref{iLie} for a higher-dimensional $l$-adic Lie group,
let $K_n=\Q(\zeta_{3^n}, \sqrt[3^n]{7})$, a `false Tate curve tower' 
in the terminology of \cite{HV, iwacomp}. 
Let $E=11A1$ and $E'=11A3$, as in Example \ref{11a3ord}. 
We find (see Example \ref{falseTate}) 
that either $\sha_{E/K_n}[5^\infty]$ is infinite or 
$$
  \frac{|\sha_{E/K_n}[5^\infty]|}{|\sha_{E'\!/K_n}[5^\infty]|} = 5^{3^{2n-1}-3^n}.
$$
\end{example}


\begin{example}
Let $K_\infty$ be the unique $\Z_5^2$-extension of $\Q(i)$
and let $K_n$ be its $n$th layer;
thus $\Gal(K_n/\Q)\iso C_{5^n}\times D_{2\cdot 5^n}$.
If we take the 5-isogenous curves $E=75A1, E'=75A2$ over $\Q$, 
with additive potentially supersingular reduction at 5,
we find that (see Example \ref{ex75a1})
$$
\frac{|\Sel^{\div}_{5^\infty}(E/K_n)[\phi]|}
     {|\Sel^{\div}_{5^\infty}(E'\!/K_n)[\phi^t]|} 
\frac{|\shazero_{E/K_n}[5^\infty]|}{|\shazero_{E'\!/K_n}[5^\infty]|} 
  = 
  5^{\,\mu 5^{2n} + \mu_1(n) 5^n + \mu_2(n)}
$$
with 
$$
  \mu=-\frac13, \qquad
  \mu_1(n) = 1-\frac23(-1)^n, \qquad
  \mu_2(n) = 0.
$$
So the assumption that $E$ does not have potentially supersingular
reduction in Theorem \ref{iLie}(2) cannot be removed,
as the $\mu_i$ may fluctuate with $n$.
\end{example}


\begin{example}
As opposed to the cyclotomic extensions, 
for general\linebreak $\Z_l$-extensions of number fields 
there is an extra term in $\mu$ coming from Tamagawa numbers 
(compare Theorem \ref{iZlmain} with Theorem \ref{imain}).
For example, consider the 5-isogeny $11A1\to 11A3$ as in Example \ref{11a3ord}
in the 5-anticyclotomic tower $K_\infty$ of $K=\Q(i)$. 
Because $11$ is inert in $\Q(i)$, and so totally split in $K_\infty/K$, 
there is a $\mu$-contribution from the Tamagawa numbers
(5 and 1) in this $\Z_5$-extension, but not in the cyclotomic one.
\end{example}


\begin{remark}[CM curves with $\mu>0$]
If $K_\infty/K$ is a $\Z_p$-extension and $\Sel_{p^\infty}(E/K_\infty)$ is
cotorsion over the Iwasawa algebra of $\Gal(K_\infty/K)$, then it has a 
well-defined $\mu$-invariant as in classical Iwasawa theory. 
Theorem~\ref{iZlmain} gives a formula for its change under isogenies in
terms of elementary invariants, and allows us to generate examples with
positive $\mu$-invariant. 

Consider, for instance, elliptic curves with complex multiplication
and good ordinary reduction at $p$. 
Such examples over $\Q$ with a $p$-isogeny
are almost non-existent: there are 13 CM $j$-invariants over $\Q$, 
and there is only one with a $p$-isogeny that admits good reduction at $p$.
It is $j=-3^35^3$ (CM by $\Z[{\scriptscriptstyle\frac{1+\sqrt{-7}}2}]$), 2-isogenous
to $j=3^35^317^3$ (CM~by $\Z[\sqrt{-7}]$). 
(This is easy to check from the table
of CM $j$-invariants \cite{Sil2} Appendix A and by computing the isogenous curves,
e.g. in Magma~\cite{Magma}.) The simplest example with these $j$-invariants is 
$$
  \phi: 49A1 \lar 49A2. 
$$
Here $\Omega_{49A1}/\Omega_{49A2}=2$, and so $49A2$ does have 
positive $\mu$-invariant for $p=2$
\par\pagebreak\noindent
as well as unbounded 2-part of $\sha$ in every cyclotomic $\Z_l$-extension of $\Q$,
by Theorem~\ref{imain}.
Assuming Greenberg's conjecture (Remark \ref{remarks}(4)), 
the curve $49A2$ and $p\!=\!2$ is the unique 
example (up to quadratic twists) of a good ordinary CM 
curve over $\Q$ with positive $\mu$-invariant.

Over larger number fields, other examples are easy to construct.
For instance, the curve
$$
  E: y^2   =   x^3   -   24z^7\sqrt{z+3}\,x^2   +   zx,\quad\qquad z={\frac{\sqrt5-1}2}
$$
is defined over $K=\Q(\zeta_{20})^+=\Q(\sqrt{z+3})$  
and has CM by $\Z+5i\Z$. It has good ordinary reduction at the prime above $5$,
and is 5-isogenous to $y^2=x^3+zx$. 
Computing the periods and applying Theorem \ref{iZlmain}, we find that
it should have positive $\mu$-invariant both over the
$\Z_5$-cyclotomic extension of~$K$, and over every $\Z_5$-extension of
$K(i)=\Q(\zeta_{20})$. 

Let $F_\infty$ be the composite of all $\Z_5$-extensions of $K(i)$, so that 
$G=\Gal(F_\infty/K)\iso\Z_5^5$.
The $5^\infty$-Selmer group of $E$ over
$F_\infty$ is conjectured to satisfy the $\cM_H(G)$-conjecture 
of non-commutative Iwasawa theory \cite{CFKSV}. 
As John Coates remarked to us, this example provides evidence
for the conjecture as follows. Similar arguments to those given in \cite{CS} 
would show that the $\cM_H(G)$-conjecture implies that the $G$-$\mu$-invariant 
of the Selmer group over $F_\infty$ 
would have to be equal to the usual $\mu$-invariant 
of the Selmer group over the cyclotomic $\Z_5$-extension of $K(i)$,
which we have shown to be non-zero. Thus, granted the $\cM_H(G)$-conjecture, 
it would follow that the $G$-$\mu$-invariant of the Selmer group over $F_\infty$ 
would have to be positive, and then an easy further argument shows that 
the $\mu$-invariant over every $\Z_5$-extension of $K(i)$ 
would also be positive, in accord with what we have proven.
%
\end{remark}

\bigskip

\subsection*{Brief overview of the paper}

To control the change of Selmer groups, we invoke the theorem
on the invariance of the Birch--Swinnerton-Dyer conjec\-ture under isogeny
by Cassels and Tate \cite{CasVIII, TatC}. 
This is recalled in \hbox{Theorem}~\ref{thmsel} in \S\ref{sinv},
after we introduce a convenient choice of periods in~\S\ref{sper}. 
Pretty much the rest of the paper studies how the terms of the 
Birch--Swinnerton-Dyer formula behave in towers of local fields 
and number fields: minimal differentials (\S\ref{som}), 
Tamagawa numbers (\S\ref{stam}), torsion (\S\ref{stors}) and the divisible
part of Selmer (\S\ref{sdiv}).
At the end of \S\ref{sinv} we also give some examples how this procedure
works. 
Theorems \ref{imain}-\ref{iLie} are proved in \S\ref{sgrowth}.
In \S\ref{som}, \S\ref{stam} we rely on the results of \cite{isogloc} 
that describe how local invariants of elliptic curves change under isogeny.

The appendix (\S\ref{sapp}) concerns the behaviour of conductors of elliptic curves
and Galois representations in field extensions. There is no assumption on
the existence of an isogeny here, and the results may be of independent
interest.

\newpage

\subsection*{Notation}
\label{not}
We write $E, E'$ for elliptic curves and $A, A'$ for abelian varieties. 
We usually have an isogeny $E\to E'$ or $A\to A'$, denoted by $\phi$. 
Its dual $E'\to E$ or $(A')^t\to A^t$ is denoted by $\phi^t$. 
Number fields are denoted by $K, F, ...,$ and $l$-adic fields 
(finite extensions of $\Q_l$) by $\K, \F, ...$. 
We also use the following notation:

\bigskip

\begin{tabular}{llll}
\vphantom{$\int^X$}%
$|\cdot|_v$ & normalised absolute value at a prime $v$\cr
$v(\cdot)$, $\ord_v(\cdot)$ & normalised valuation in a local field/at $v$\cr
$j_E$ & $j$-invariant of an elliptic curve $E$ \cr
\end{tabular}

\begin{tabular}{llll}
$\Delta_E, \Delta_{E/K}$ & minimal discriminant of an elliptic curve over $K$\cr
$\delta$, $\delta'$ & $v(\Delta_{E/\K}), v(\Delta_{E'\!/\K})$ when $\K$ is local\cr
$f_{E/\K}$ & conductor exponent of $E/\K$ when $\K$ is local\cr
\vphantom{$\int^X$}%
$\Om_\phi(\F)$ & see Definition \ref{defom} \cr
$\omega_v^{\min}=\smash{\omega_{A,v}^{\min}}$ & N\'eron minimal exterior form of an abelian variety at a \cr
  & prime $v$ (minimal differential for an elliptic curve)\cr
$c_{A/K}, c_v(A/K)$ & Tamagawa number over a local field/global field at $v$\cr
$\Omega, \Omega^\ast$ & infinite periods, see Definition \ref{defOmega}\cr
$\sha, \sha^{\div}, \shazero$ & Tate-Shafarevich group, its divisible part,
  $\shazero\,=\!\sha/\sha^{\div}$ \cr
$\Sel_{p^\infty}(A/K)$ & $\varinjlim\Sel_{p^n}(A/K)$, the $p$-infinity Selmer group\cr
$\rk_p(A/K)$ & $\Zp$-corank of $\Sel_{p^\infty}(A/K)$ \cr
$\Q(l^n)$ & the $n$th layer of the cyclotomic $\Z_l$-extension of $\Q$, i.e.\cr
  & the unique totally real degree $l^n$ subfield of $\Q(\zeta_{l^{n+2}})$\cr 
$X[\phi], X[p^\infty]$  & $\phi$-torsion, $p$-primary component 
  of an abelian group $X$\cr
$\lfloor \cdot\rfloor, \{\cdot\}$ & integer part (floor) and fractional part 
  $\{x\}\!=\!x\!-\!\lfloor x\rfloor$\cr
\end{tabular}

\bigskip

Any two non-zero invariant exterior forms $\omega_1, \omega_2$ on an abelian variety $A/K$ 
are multiples of one another, $\omega_1=\alpha\omega_2$ with $\alpha\in K$.
We will write $\omega_1/\omega_2$ for the scaling factor $\alpha$.

When $\K$ is an $l$-adic field, recall that an elliptic curve $E/\K$ has 
additive reduction if and only if it has conductor exponent $f_{E/\K}\ge 2$,
and that $f_{E/\K}=2$ if and only if the $\ell$-adic Tate module of $E$ is tamely ramified
for some (any) $\ell\ne l$.
We will call this {\em tame} reduction, and {\em wild} if $f_{E/\K}>2$.
If $l\ge 5$, the reduction is always tame.
We remind the reader that $E/\K$ has potentially good reduction if
$v(j_E)\ge 0$ and potentially multiplicative reduction if $v(j_E)<0$.

Finally, an \emph{$l$-adic Lie group} $G$ is a closed subgroup $\GL_k(\Z_l)$
for some $k$; it has a natural filtration by open subgroups, the kernels of 
the reduction maps mod $l^n$.

We use Cremona's notation (such as $11A1$) for elliptic curves over $\Q$.

\smallskip

\begin{acknowledgements}
This investigation was inspired by conversations with John Coates. 
We would like to thank him and also Masato Kurihara, Ralph Greenberg 
and Karl Rubin for their comments. 
The first author is supported by a Royal Society
University Research Fellowship.
\end{acknowledgements}

\newpage

\section{Periods}
\label{sper}

We introduce a convenient form of periods of abelian varieties over
number fields, that are model-independent and well-suited for the 
Birch--Swinnerton-Dyer conjecture.

\begin{definition}
\label{defOmega}
An abelian variety $A/K$ has a non-zero invariant exterior form $\omega$, 
unique up to $K$-multiples. If $K=\C$, we define the \emph{local period}
$$
  \Omega_{A/\C,\omega} = \int_{A(\C)}\!2^{\dim A} \omega\!\wedge\omegabar.
$$
If $K=\R$, define
$$
  \Omega_{A/\R,\omega} = \int_{A(\R)} |\omega|
    \qquad\text{and}\qquad
  \Omega_{A/\R}^\ast = \frac{\Omega_{A/\C,\omega}}{\Omega_{A/\R,\omega}^2}.
$$
If $K$ is a number field, define the \emph{global period}
$$
  \Omega_{A/K} \>=\>  \prod_{v\nmid\infty} |\omega/\omega_{v}^{\min}|_v \,
    \prod_{v|\infty} \Omega_{A/K_v,\omega}.
$$
\vskip-1mm\par\noindent
Here $v$ runs through places of $K$, and the term at $v$ in the first product is
the normalised $v$-adic absolute value of the quotient 
of $\omega$ by the N\'eron minimal form at $v$. 
\end{definition}

%

\begin{remark}
\label{perQ}
Note that both $\Omega_{A/K_v}^\ast$ and $\Omega_{A/K}$
are independent of the choice of $\omega$, by the product formula 
for the second one.

An elliptic curve $E/\Q$ can be put in minimal Weierstrass form, 
with the global minimal differential $\omega=\frac{dx}{2y+a_1x+a_3}$.
Then $\Omega_{E/\Q}=\Omega_{E/\R,\omega}$, which is the traditional
real period $\Omega_+$ or $2\Omega_+$, depending on whether or not 
$E(\R)$ is connected. 
If $E(\C)=\C/\Z\tau\!+\!\Z$ under the usual complex uniformisation,
then $\Omega^\ast_{E/\R}=\Im\tau$. 
\end{remark}

\begin{lemma}
\label{perrat}
Let $\phi: A\to A'$ be an isogeny of abelian varieties over a number field $K$. 
Then both $\frac{\Omega_{A'\!/K}}{\Omega_{A/K}}$ and, for real places $v$, 
$\frac{\Omega_{A'\!/K_v}^\ast}{\Omega_{A/K_v}^\ast}$
are positive rational numbers. They have trivial $p$-adic valuation for all 
$p\nmid\deg\phi$.
\end{lemma}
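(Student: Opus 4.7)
The plan is to fix nonzero invariant forms $\omega_A$ on $A$ and $\omega_{A'}$ on $A'$ and write $\phi^*\omega_{A'}=c\,\omega_A$ with $c\in K^\times$ (possible since $\phi$ is separable in characteristic~$0$, so $\phi^*\omega_{A'}$ is another nonzero invariant form on~$A$). I would then compute each local ratio of periods in terms of $c$ and purely geometric data, multiply over all places, and use the product formula $\prod_v|c|_v=1$ to cancel the $c$-dependence. What remains is a finite product of positive rationals whose prime factors divide $\deg\phi$, proving both assertions at once.

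For the local computation, at a complex place $v$ the holomorphic map $\phi\colon A(\C)\to A'(\C)$ is a $\deg\phi$-to-one cover, so $\int_{A(\C)}\phi^*\eta=\deg\phi\cdot\int_{A'(\C)}\eta$; taking $\eta=2^d\omega_{A'}\wedge\bar\omega_{A'}$ and using that the normalised absolute value at a complex place is $|c|_v=c\bar c$ gives $\Omega_{A'/K_v,\omega_{A'}}/\Omega_{A/K_v,\omega_A}=|c|_v/\deg\phi$. At a real place $v$ the map $\phi\colon A(K_v)\to A'(K_v)$ is open with kernel of order $n_v=|\ker\phi(K_v)|$, hence its image has finite index $m_v$, and both $n_v$ and $m_v$ (the latter via the embedding $A'(K_v)/\phi A(K_v)\hookrightarrow H^1(\Gal(\C/\R),\ker\phi)$, whose order divides $n_v$ by the vanishing of the Herbrand quotient) divide $\deg\phi$; a change of variable yields $\Omega_{A'/K_v,\omega_{A'}}/\Omega_{A/K_v,\omega_A}=|c|_v\,m_v/n_v$. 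Combining these two formulas gives $\Omega^\ast_{A'/K_v}/\Omega^\ast_{A/K_v}=(n_v/m_v)^2/\deg\phi$ at real places, which already proves the second statement of the lemma. At a finite $v$, setting $u_v=\phi^*\omega_{A',v}^{\min}/\omega_{A,v}^{\min}\in K_v^\times$ and substituting $\phi^*\omega_{A'}=c\omega_A$ gives
$$
\frac{|\omega_{A'}/\omega_{A',v}^{\min}|_v}{|\omega_A/\omega_{A,v}^{\min}|_v}=\frac{|c|_v}{|u_v|_v}.
$$

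Multiplying the local ratios and invoking $\prod_v|c|_v=1$ then gives
$$
\frac{\Omega_{A'/K}}{\Omega_{A/K}}=\prod_{v\nmid\infty}|u_v|_v^{-1}\cdot\prod_{v\text{ real}}\frac{m_v}{n_v}\cdot\prod_{v\text{ complex}}\frac{1}{\deg\phi}.
$$
For $p\nmid\deg\phi$, each infinite factor has trivial $p$-adic valuation (since $n_v,m_v\mid\deg\phi$); for a finite $v$ whose residue characteristic is $\ne p$, $|u_v|_v$ is a power of that residue characteristic and hence a $p$-adic unit; and for $v\mid p$ with $p\nmid\deg\phi$, $\ker\phi$ is an \'etale group scheme at $v$, so $\phi$ extends to an \'etale isogeny of N\'eron model identity components, forcing $u_v$ to be a unit. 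The last point also ensures $|u_v|_v=1$ for all but finitely many $v$, so the above is a finite product of positive rationals. The main obstacle, and the only non-formal input, is this claim that a $\deg\phi$-isogeny is \'etale on N\'eron model identity components away from residue characteristics dividing $\deg\phi$; once that is granted, the rest is bookkeeping with the product formula.
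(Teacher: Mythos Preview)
Your argument is correct and follows essentially the same route as the paper's proof. The paper streamlines by choosing $\omega=\phi^*\omega'$ from the start (so your scalar $c$ is $1$ and the product formula step disappears), and it bounds the archimedean cokernel via the conjugate isogeny $\phi'$ (so that $\coker\phi$ is killed by $\deg\phi$) rather than via the Herbrand quotient of $\ker\phi$ over $\Gal(\C/\R)$; both proofs rest on the same key input, that $\phi^*\omega_{A',v}^{\min}$ is a unit multiple of $\omega_{A,v}^{\min}$ when $v\nmid\deg\phi$.
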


\begin{proof}
Fix a non-zero invariant exterior form $\omega'$ on $A'$, and 
set $\omega=\phi^*\omega'$. 
For $v|\infty$, 
$$
  \frac{\Omega_{A'\!/K_v,\omega'}}{\Omega_{A/K_v,\omega}} =  
  \frac{|\coker\phi: A(K_v)\to A'(K_v)|}{|\ker\phi: A(K_v)\to A'(K_v)|}.
$$
This is a positive rational, and considering the conjugate isogeny 
$\phi': A'\to A$ (so that $\phi'\circ\phi, \phi\circ\phi'$ are the 
multiplication-by-$\deg\phi$ maps), we see that 
the only prime factors of $|\ker|$ and $|\coker|$ are those dividing 
$\deg\phi$. The claim for $\Omega^\ast$ now follows.

As for the global periods, 
$$
  \frac{\Omega_{A'\!/K}}{\Omega_{A/K}}
  =
  \prod_{v\nmid\infty} \frac{|\omega'\!/\omega_{A',v}^{\min}|_v}{|\omega/\omega_{A,v}^{\min}|_v}
  \prod_{v|\infty}\frac{\Omega_{A'\!/K_v,\omega'}}{\Omega_{A/K_v,\omega}}
$$
is a positive rational. If $v\nmid\deg\phi$, then 
$\phi^*(\omega_{A',v}^{\min})$ is a unit multiple of 
$\omega_{A,v}^{\min}$, so 
$\frac{|\omega'\!/\omega_{A',v}^{\min}|^{}_v}{|\omega/\omega_{A,v}^{\min}|^{}_v}=1$. 
So $\frac{\Omega_{A'\!/K}}{\Omega_{A/K}}$
has trivial $p$-adic valuation at primes $p\nmid\deg\phi$.
\end{proof}

\begin{lemma}
\label{lemper}
Let $\phi: A\to A'$ be an isogeny of abelian varieties over a number field $K$, 
and $F/K$ a finite extension. Then
$$
  \Omega_{A/F}
      = 
  \Omega_{A/K}^{[F:K]}
  \prod_{\text{$v$ real}}\bigl({\Omega^\ast_{A/K_v}}\bigr)^{\#\{w|v\text{ complex}\}}
  \prod_{v,\>w|v}\>\,
  \biggl|
  \frac{{\omega_{v}^{\min}}}{\omega_{w}^{\min}}
  \biggr|_{w},
$$
where $v$ runs over places of $K$ and $w$ over places of $F$ above $v$.
\end{lemma}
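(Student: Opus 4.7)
The plan is to evaluate $\Omega_{A/F}$ straight from the definition using a single invariant exterior form $\omega$ on $A$ defined over $K$ (hence also over $F$ after base change). Since $\Omega_{A/F}$ is independent of this choice by the product formula (cf.\ Remark \ref{perQ}), it suffices to track the contributions place by place and group places $w$ of $F$ by the place $v$ of $K$ they lie over. The strategy has three steps: handle the finite places, handle the archimedean places, and combine.

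For a finite place $w\mid v$, I would write
$$|\omega/\omega_w^{\min}|_w \;=\; |\omega/\omega_v^{\min}|_w \cdot |\omega_v^{\min}/\omega_w^{\min}|_w.$$
The first factor is the $w$-adic absolute value of an element of $K_v^\times$, and the normalisation gives $|a|_w=|a|_v^{[F_w:K_v]}$ for $a\in K_v$. Since $\sum_{w\mid v}[F_w:K_v]=[F:K]$, we get
$$\prod_{w\mid v}|\omega/\omega_v^{\min}|_w \;=\; |\omega/\omega_v^{\min}|_v^{[F:K]}.$$
Taking the product over all finite $v$ produces the factor $\bigl(\prod_{v\nmid\infty}|\omega/\omega_v^{\min}|_v\bigr)^{[F:K]}$, which is precisely the finite part of $\Omega_{A/K}^{[F:K]}$, times exactly the discrepancy product $\prod_{v,\,w\mid v}|\omega_v^{\min}/\omega_w^{\min}|_w$.

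For the archimedean places, I do case analysis on $(K_v,F_w)$. If $[F_w:K_v]=1$ (either both real or both complex), then $\Omega_{A/F_w,\omega}=\Omega_{A/K_v,\omega}$. If $v$ is real and $w$ is complex, so $[F_w:K_v]=2$, the definition of $\Omega^\ast$ rearranges to $\Omega_{A/\C,\omega}=\Omega^\ast_{A/\R}\cdot\Omega_{A/\R,\omega}^2$, i.e.\
$$\Omega_{A/F_w,\omega} \;=\; \Omega^\ast_{A/K_v}\cdot\Omega_{A/K_v,\omega}^{[F_w:K_v]}.$$
Thus in every case $\Omega_{A/F_w,\omega}=\Omega_{A/K_v,\omega}^{[F_w:K_v]}$ up to a factor of $\Omega^\ast_{A/K_v}$ appearing exactly when $v$ is real and $w$ is complex. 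Multiplying over $w\mid v$ and using $\sum_{w\mid v}[F_w:K_v]=[F:K]$ once more yields
$$\prod_{w\mid v}\Omega_{A/F_w,\omega} \;=\; \Omega_{A/K_v,\omega}^{[F:K]}\cdot(\Omega^\ast_{A/K_v})^{\#\{w\mid v\text{ complex}\}}$$
(with the $\Omega^\ast$ factor trivially absent when $v$ is complex). Taking the product over infinite $v$ contributes the infinite part of $\Omega_{A/K}^{[F:K]}$ and the explicit $\Omega^\ast$ factor in the statement.

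Combining the finite and archimedean contributions gives the claimed identity. The only substantive point — everything else is bookkeeping — is the archimedean case where $v$ is real and $w$ is complex, since that is where the normalisation $\Omega^\ast=\Omega_\C/\Omega_\R^2$ is forced to enter; this is the reason for the extra factor in the lemma.
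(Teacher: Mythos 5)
Your proof is correct and follows essentially the same route as the paper's: fix a $K$-rational exterior form $\omega$, split each local factor of $\Omega_{A/F}$ by place $v$ of $K$, use the product decomposition $|\omega/\omega_w^{\min}|_w = |\omega/\omega_v^{\min}|_w\,|\omega_v^{\min}/\omega_w^{\min}|_w$ together with $\sum_{w\mid v}[F_w:K_v]=[F:K]$ at the finite places, and the identity $\Omega_{A/\C,\omega}=\Omega^\ast_{A/\R}\cdot\Omega_{A/\R,\omega}^2$ at real $v$ with complex $w$. The bookkeeping and case split are identical to the published argument.
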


\begin{proof}
Choose an invariant exterior form $\omega$ for $A/K$. We compute the terms in 
$\Omega_{A/F}$ using $\omega$. 

Let $v$ be a place of $K$. 
If $v$ is complex, then 
$
  \prod_{w|v} \Omega_{A/F_w,\omega} = \Omega_{A/K_v,\omega}^{[F:K]}.
$
If $v$ is real, then, writing $\Sigma_+, \Sigma_-$ for the set of real and
complex places $w|v$ in~$F$, we have
$$
  \prod_{w|v} \Omega_{A/F_w,\omega} = 
  \prod_{w\in\Sigma_+} \Omega_{A/K_v,\omega} 
    \prod_{w\in\Sigma_-} \Omega_{A/K_v,\omega}^2 \Omega^\ast_{A/K_v} 
      = \Omega_{A/K_v,\omega}^{[F:K]} (\Omega^\ast_{A/K_v})^{|\Sigma_-|}.
$$
If $v\nmid\infty$, then
$$
  \prod_{w|v}\,
    \Bigl|
  \frac{{\omega}}{\omega_{w}^{\min}}
  \Bigr|_{w} = 
  \prod_{w|v}\, 
    \Bigl|
  \frac{{\omega}}{\omega_{v}^{\min}}
  \Bigr|_{w}
    \Bigl|
  \frac{{\omega_v^{\min}}}{\omega_{w}^{\min}}
  \Bigr|_{w} =  
    \Bigl|
  \frac{{\omega}}{\omega_{v}^{\min}}
  \Bigr|_{v}^{[F:K]}
  \prod_{w|v}\,
  \Bigl|
  \frac{{\omega_v^{\min}}}{\omega_{w}^{\min}}
  \Bigr|_{w}.
$$
Multiplying the terms over all places $v$ of $K$ gives the claim. 
\end{proof}

\begin{remark}
\label{remdiffdisc}
For elliptic curves, the term
${\omega_{v}^{\min}/\omega_{w}^{\min}}$ relates to the behaviour of the 
minimal discriminant of $E$ in $F_w/K_v$ (cf. \cite{Sil1} Table III.1.2),
$$
  \ord_w\Bigl(\frac{\omega_{w}^{\min}}{\omega_{v}^{\min}}\Bigr)=\frac{1}{12}\>
  \ord_w\Bigl(\frac{\Delta_{E/K}}{\Delta_{E/F}}\Bigr).
$$
\end{remark}

\section{BSD invariance under isogeny}
\label{sinv}

We now state a version of the invariance of the Birch--Swinnerton-Dyer
conjecture under isogeny for Selmer groups (see page \pageref{not} for 
the notation).

%
%
%

\begin{theorem}
\label{thmsel}
Let $\phi: A\to A'$ be a isogeny of abelian varieties over a number field $K$,
and $\phi^t: A'^t\to A^t$ the dual isogeny.
If the degree of $\phi$ is a power of $p$, then
\begingroup\smaller[1]
$$
\frac{
|\Sel^{\div}_{p^\infty}(A/K)[\phi]|
}{
|\Sel^{\div}_{p^\infty}({A'}^t/K)[\phi^t]|
} 
  \frac{|\shazero_{A/K}[p^\infty]|}{|\shazero_{A'\!/K}[p^\infty]|}
=  
  \frac{|A(K)[p^\infty]||A^t(K)[p^\infty]|}{|A'(K)[p^\infty]||A'^t(K)[p^\infty]|}
  \frac{\Omega_{A'\!/K}}{\Omega_{A/K}}
  \prod_{v\nmid\infty}
  \frac{c_v(A'\!/K)}{c_v(A/K)}.
$$
\endgroup
Otherwise, the left-hand side and the right-hand side have the same $p$-part.
\end{theorem}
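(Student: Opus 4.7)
The identity is the $p$-primary part of the Cassels--Tate theorem on the invariance of the Birch--Swinnerton-Dyer formula under isogeny \cite{CasVIII, TatC}, reformulated at the level of Selmer groups so as to avoid any finiteness hypothesis on $\sha$. I would prove it by a Greenberg--Wiles global Euler-characteristic computation for the Galois module $A[\phi]$, followed by a local analysis at each place and a passage to the $p^\infty$-Selmer limit. The reduction to the $p$-power degree case (for the ``otherwise'' clause) is immediate: factoring $\phi=\phi_p\phi_{p'}$ with $\phi_p$ of $p$-power degree and $\phi_{p'}$ of prime-to-$p$ degree, the $\phi_{p'}$-factor is invisible at $p$.

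The starting point is the $\phi$-descent sequence
$$0 \to A'(K)/\phi A(K) \to \Sel_\phi(A/K) \to \sha_{A/K}[\phi] \to 0$$
together with its dual for $\phi^t\colon A'^t\to A^t$. Since $A[\phi]$ has Cartier dual $A'^t[\phi^t]$, the Greenberg--Wiles formula yields
$$\frac{|\Sel_\phi(A/K)|}{|\Sel_{\phi^t}(A'^t/K)|} = \frac{|A(K)[\phi]|}{|A'^t(K)[\phi^t]|}\prod_v\frac{|A'(K_v)/\phi A(K_v)|}{|A(K_v)[\phi]|}.$$
Fixing compatible differentials $\omega=\phi^*\omega'$, a standard N\'eron-filtration argument (Tate, Schaefer) evaluates the local factor at $v\nmid\infty$ as
$$\frac{|A'(K_v)/\phi A(K_v)|}{|A(K_v)[\phi]|} = \frac{c_v(A'/K)}{c_v(A/K)}\cdot\frac{|\omega'/\omega_{A',v}^{\min}|_v}{|\omega/\omega_{A,v}^{\min}|_v},$$
while at $v\mid\infty$ a Haar-measure comparison (with care for connected components at real places) yields $\Omega_{A'/K_v,\omega'}/\Omega_{A/K_v,\omega}$. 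By Definition \ref{defOmega}, the product over all $v$ telescopes to $(\Omega_{A'/K}/\Omega_{A/K})\prod_v c_v(A'/K)/c_v(A/K)$.

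To extract the stated formula, I relate $\Sel_\phi(A/K)$ to the full $p^\infty$-Selmer group: it is the $\phi$-torsion of $\Sel_{p^\infty}(A/K)$. Iterating $\phi$ via a choice of $\psi\colon A'\to A$ with $\psi\phi=[p^m]$ and passing to the limit, the iterated $\phi^n$-Selmer groups exhaust $\Sel_{p^\infty}(A/K)$, and the torsion terms $|A(K)[\phi^n]|,\ |A'^t(K)[\phi^{tn}]|$ stabilise to $|A(K)[p^\infty]|,\ |A'^t(K)[p^\infty]|$. Since $A(K)\otimes\Q_p/\Z_p$ is divisible, $\Sel_{p^\infty}/\Sel^{\div}_{p^\infty}\cong \shazero_{A/K}[p^\infty]$; the $\phi$-kernel of $\Sel^{\div}_{p^\infty}$ captures what in classical BSD is the regulator contribution. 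The remaining factor $|A^t(K)[p^\infty]|/|A'(K)[p^\infty]|$ on the right, and the substitution of $\shazero_{A'/K}$ for $\shazero_{A'^t/K}$ on the left, come from the self-duality $|\shazero_{A/K}[p^\infty]|=|\shazero_{A^t/K}[p^\infty]|$ (and likewise for $A'$) provided by the perfectness of the Cassels--Tate pairing on $\shazero$.

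The main obstacle is the local computation at primes of bad reduction, in particular the additive and wildly ramified ones, where the N\'eron filtration of $A(K_v)$ and its image under $\phi$ have to be traced delicately. This is the technical heart of Cassels--Tate, and I would invoke \cite{CasVIII, TatC} (and Schaefer's modern treatment) rather than rederive it from scratch.
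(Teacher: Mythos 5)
Your approach is genuinely different from the paper's. The paper proves this theorem in essentially one line: it cites Theorem~4.3 of the authors' earlier paper \cite{squarity} (which is itself the Selmer-group reformulation of Cassels--Tate), and observes that the term $Q(\phi)$ there is exactly $\Sel^{\div}_{p^\infty}(A/K)[\phi]$. You instead sketch the underlying Greenberg--Wiles/Cassels--Tate argument from scratch, which is the route one would take to prove \cite{squarity} Thm~4.3 itself; the skeleton (global Euler characteristic for $A[\phi]$ against its Cartier dual $A'^t[\phi^t]$, local factors via N\'eron models, passage to $p^\infty$-Selmer, and the Cassels--Tate pairing) is the right one, and the two statements do differ by exactly this layer of proof-versus-citation.

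That said, your passage to the $p^\infty$-Selmer group contains a genuine gap. You assert that $\Sel_\phi(A/K)$ ``is the $\phi$-torsion of $\Sel_{p^\infty}(A/K)$.'' This is false: the natural map $\Sel_\phi(A/K)\to\Sel_{p^\infty}(A/K)[\phi]$ is surjective but has kernel $A'(K)[p^\infty]/\phi A(K)[p^\infty]$. This kernel, together with its mirror $A^t(K)[p^\infty]/\phi^t A'^t(K)[p^\infty]$ for the dual side, is precisely what converts the two-term torsion ratio $|A(K)[\phi]|/|A'^t(K)[\phi^t]|$ coming out of Greenberg--Wiles into the four-term ratio $\frac{|A(K)[p^\infty]||A^t(K)[p^\infty]|}{|A'(K)[p^\infty]||A'^t(K)[p^\infty]|}$ in the statement; your iterated-$\phi$ stabilisation argument does not track it. Moreover, the way you invoke the Cassels--Tate pairing is not what is actually needed: the bare self-duality $|\shazero_{A'/K}[p^\infty]|=|\shazero_{A'^t/K}[p^\infty]|$ produces no torsion factor. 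What one actually uses is the adjointness $\langle\phi x,y\rangle=\langle x,\phi^t y\rangle$, which identifies $\shazero_{A'^t/K}[p^\infty][\phi^t]$ with the Pontryagin dual of $\coker\bigl(\phi\colon\shazero_{A/K}[p^\infty]\to\shazero_{A'/K}[p^\infty]\bigr)$; this is what turns the quantity $\frac{|\shazero_{A/K}[p^\infty][\phi]|}{|\shazero_{A'^t/K}[p^\infty][\phi^t]|}$ (which is what the $\Sel_{p^\infty}[\phi]$-ratio actually decomposes into, together with the $\Sel^{\div}[\phi]$-ratio) into $\frac{|\shazero_{A/K}[p^\infty]|}{|\shazero_{A'/K}[p^\infty]|}$. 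So the pieces of your argument are all in the right vicinity, but the bookkeeping as written does not close.
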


\begin{proof} 
This is essentially \cite{squarity} Thm 4.3, that says
\begingroup\smaller[1]
$$
\frac{
|\Sel^{\div}_{p^\infty}(A/K)[\phi]|
}{
|\Sel^{\div}_{p^\infty}({A'}^t/K)[\phi^t]|
} 
  \prod_{\scriptscriptstyle p|\deg\phi}\!\frac{|\shazero_{A/K}[p^\infty]|}{|\shazero_{A'\!/K}[p^\infty]|}
=  
  \frac{|A(K)_{\scriptscriptstyle\tors}||A^t(K)_{\scriptscriptstyle\tors}|}{|A'(K)_{\scriptscriptstyle\tors}||A'^t(K)_{\scriptscriptstyle\tors}|}
  \frac{\Omega_{A'\!/K}}{\Omega_{A/K}}
  \prod_{\scriptscriptstyle v\nmid\infty}
  \frac{c_v(A'\!/K)}{c_v(A/K)}.
$$
\endgroup
The term $Q(\phi)$ in \cite{squarity} in exactly $\Sel^{\div}_{p^\infty}(A/K)[\phi]$.
\end{proof}

\newpage

\begin{corollary}
\label{selmain}
Let $\phi: A\to A'$ be an isogeny of abelian varieties over a number field $K$
with dual $\phi^t: A'^t\to A^t$, and $F/K$ a finite extension. 
If the degree of $\phi$ is a power of~$p$, then
$$
\frac{
|\Sel^{\div}_{p^\infty}(A/F)[\phi]|
}{
|\Sel^{\div}_{p^\infty}(A'^t/F)[\phi^t]|
} 
  \frac{|\shazero_{A/F}[p^\infty]|}{|\shazero_{A'\!/F}[p^\infty]|} =
  \frac{|A(F)[p^\infty]||A^t(F)[p^\infty]|}{|A'(F)[p^\infty]||A'^t(F)[p^\infty]|}
  \Bigl(\frac{\Omega_{A'\!/K}}{\Omega_{A/K}}\Bigr)^{[F:K]} \times
$$
$$
  \times
  \prod_{\text{$v$ real}}
  \bigl(\frac{{\Omega^\ast_{A'\!/K_v}}}{{\Omega^\ast_{A/K_v}}}\bigr)^%
  {\#\{w|v\text{ complex}\}} 
  \prod_{v\nmid\infty}
  \frac{c_v(A'\!/F)}{c_v(A/F)}
  \>\,
  \prod_{v,\>w|v}\>\,
  \biggl|
  \frac
  {{{\omega_{A',v}^{\min}}}/{\omega_{A',w}^{\min}}}
  {{{\omega_{A,v}^{\min}}}/{\omega_{A,w}^{\min}}}
  \biggr|_{w},
$$
where $v$ ranges over places of $K$, and $w|v$ are places of $F$.
If $\phi$ has arbitrary degree, then
the left-hand side and the right-hand side have the same $p$-part.
\end{corollary}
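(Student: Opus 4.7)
My plan is to apply Theorem \ref{thmsel} to $\phi$ with the base field taken to be $F$ rather than $K$, and then re-express the global periods $\Omega_{A/F}$ and $\Omega_{A'/F}$ in terms of the corresponding periods over $K$ using Lemma \ref{lemper}. The corollary is then essentially a rearrangement of these two ingredients.

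First I would invoke Theorem \ref{thmsel} directly over $F$ to obtain
$$
\frac{|\Sel^{\div}_{p^\infty}(A/F)[\phi]|}{|\Sel^{\div}_{p^\infty}(A'^t/F)[\phi^t]|} \frac{|\shazero_{A/F}[p^\infty]|}{|\shazero_{A'\!/F}[p^\infty]|} = \frac{|A(F)[p^\infty]||A^t(F)[p^\infty]|}{|A'(F)[p^\infty]||A'^t(F)[p^\infty]|} \cdot \frac{\Omega_{A'\!/F}}{\Omega_{A/F}} \prod_{w\nmid\infty} \frac{c_w(A'\!/F)}{c_w(A/F)},
$$
where $w$ runs over the finite places of $F$. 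When $\deg\phi$ is a power of $p$ this is a genuine equality, and otherwise the two sides have the same $p$-part, exactly as in the conclusion we want.

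Next I would apply Lemma \ref{lemper} separately to $A$ and to $A'$ and take the quotient of the two identities. All the pieces that depend only on $F/K$ (and not on the abelian variety) appear identically in numerator and denominator, and the contributions that survive combine into
$$
\frac{\Omega_{A'\!/F}}{\Omega_{A/F}} = \Bigl(\frac{\Omega_{A'\!/K}}{\Omega_{A/K}}\Bigr)^{\![F:K]} \prod_{\text{$v$ real}} \Bigl(\frac{\Omega^\ast_{A'\!/K_v}}{\Omega^\ast_{A/K_v}}\Bigr)^{\!\#\{w|v\text{ complex}\}} \prod_{v,\,w|v} \biggl|\frac{\omega_{A',v}^{\min}/\omega_{A',w}^{\min}}{\omega_{A,v}^{\min}/\omega_{A,w}^{\min}}\biggr|_w.
$$

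Finally, substituting this expression into the identity furnished by Theorem \ref{thmsel}, and rewriting the Tamagawa product $\prod_{w\nmid\infty}$ over places of $F$ as the double product $\prod_v\prod_{w|v}$ indexed by finite places $v$ of $K$ and places $w|v$ of $F$, produces the corollary exactly as stated. I do not foresee any genuine obstacle here: the argument is a direct combination of a global invariance statement (Theorem \ref{thmsel}) with a compatibility formula for periods under base change (Lemma \ref{lemper}). The only bookkeeping to watch is that when $\deg\phi$ is not a prime power, Theorem \ref{thmsel} yields only an equality of $p$-primary parts, which is precisely the weaker conclusion claimed in the corollary in that case.
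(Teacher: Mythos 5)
Your proposal is exactly the paper's proof: apply Theorem \ref{thmsel} over the extension $F$ and then substitute the expression for $\Omega_{A'\!/F}/\Omega_{A/F}$ furnished by Lemma \ref{lemper}. The bookkeeping you describe (rewriting the Tamagawa product over places of $F$ as a double product indexed by places of $K$, and noting that the weaker ``same $p$-part'' conclusion carries through when $\deg\phi$ is not a power of $p$) is correct and matches the stated corollary.
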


\begin{proof}
Combine Theorem \ref{thmsel} with Lemma \ref{lemper}.
\end{proof}

Corollary \ref{selmain} is our main tool for studying the Selmer growth
in towers in~\S\ref{sgrowth}. 
As we now illustrate, it already enables us to construct explicit examples
of interesting growth of Selmer and $\sha$. 
The general behaviour of the Tama\-gawa number quotient 
will be discussed in \S\ref{stam}, 
torsion quotient in \S\ref{stors},
and 
the contribution from exterior forms in \S\ref{som},
under the name of $\Om_\phi(F_w)$.

%


\begin{example}
\label{falseTate}
Let $K_n=\Q(\zeta_{3^n}, \sqrt[3^n]{7})$, a `false Tate curve tower' 
in the\linebreak terminology of \cite{HV, iwacomp}, 
and let $\phi: E=11A1\to E'=11A3$ be the 5-isogeny\linebreak
as in Example \ref{11a3ord}. 
A result of Hachimori and Matsuno \cite{HM} Thm 3.1
and a cyclotomic Euler characteristic computation as in \cite{iwacomp} \S3.11
show that $\rk E/K_n\!=\!\rk_3 E/K_n\!=\!0$ for all $n\ge 1$. Therefore
$$
  \Sel_{5^\infty}(E/K_n) \quad=\quad \sha_{E/K_n}[5^\infty],
$$
and similarly for $E'$. 

The periods of the two curves are 
$$
  \Omega_{E/\Q}=1.2692...=\frac15\Omega_{E'\!/\Q}, \qquad
  \Omega^*_{E/\R}=1.1493...=5\Omega^*_{E'\!/\R},
$$
and both curves have torsion of size 5 over all $K_n$. 
Applying Corollary \ref{selmain}, we find that either 
$\sha_{E/K_n}[5^\infty]$ is infinite for some $n$, or
\beq
  \displaystyle
  \frac{|\sha_{E/K_n}[5^\infty]|}{|\sha_{E'/K_n}[5^\infty]|} &=&
    \displaystyle
    \frac{5^2}{5^2}\cdot
    \bigl(\frac{\Omega_{E'\!/\Q}}{\Omega_{E/\Q}}\bigr)^{2\cdot 3^{2n-1}}\cdot
    \bigl(\frac{\Omega^*_{E'\!/\R}}{\Omega^*_{E/\R}}\bigr)^{3^{2n-1}}
    \cdot \prod_{v|11} \frac{c_v(E'\!/K_n)}{c_v(E/K_n)}\cdot 1\cr
  &=& \displaystyle \frac{5^{2\cdot 3^{2n-1}}}{5^{3^{2n-1}}\cdot 5^{3^n}} =
      5^{3^{2n-1}-3^n}.
\eeq

\end{example}

\begin{example}[Fluctuation in Selmer growth]
\label{ex75a1}
In the previous example, the quotient of the Tate-Shafarevich groups grew like
$5^{3^{2n-1}-3^n}$. Theorem~\ref{iLie} shows that such growth of the form
$p^{\text{polynomial in $l^n$}}$ is a general phenomenon. However,
the assumption on primes of additive potentially supersingular
reduction is essential, as we now illustrate.


Let $K=\Q$ and let $K_\infty$ be the unique $\Z_5^2$-extension of $\Q(i)$, so 
that
$$
  \Gal(K_\infty/\Q)\>\>\iso\>\>\Z_5\times(\Z_5\rtimes C_2).
$$
Write $K_n$ for the $n$th
layer of $K_\infty/K$, so $\Gal(K_n/\Q)\iso C_{5^n}\times D_{2\cdot 5^n}$.

Consider the following curves over $\Q$, that are connected by a 5-isogeny 
$\phi: E\to E'$,
\beq
  E:  &y^2 + y = x^3 - x^2 - 8x - 7    \qquad& (75A1,\> \Delta_{E/\Q}=-3\cdot 5^4),\cr
  E': &y^2 + y = x^3 - x^2 + 42x + 443 \qquad& (75A2,\> \Delta_{E'\!/\Q}=-3^5\cdot 5^8). 
\eeq
They have non-split multiplicative reduction at $p=3$, of Kodaira type
$\In{1}$ and $\In{5}$, respectively, and 
additive potentially supersingular reduction at $p=5$, of Kodaira 
type~\IV{} and \IVS, respectively. Their periods are
$$
  \Omega_{E/\Q}=1.4025...=\Omega_{E'\!/\Q}, \qquad
  \Omega^*_{E/\R}=1.6646...=5\Omega^*_{E'\!/\R}.
$$
Both curves have trivial torsion over $\Q(i)$, and therefore no 5-torsion
over $K_n$, by Nakayama's lemma. 

By Corollary \ref{selmain},
$$
\frac{|\Sel^{\div}_{5^\infty}(E/K_n)[\phi]|}
     {|\Sel^{\div}_{5^\infty}(E'\!/K_n)[\phi^t]|} 
\frac{|\shazero_{E/K_n}[5^\infty]|}{|\shazero_{E'\!/K_n}[5^\infty]|} 
   =
  \Bigl(\frac{\Omega_{E'\!/K}}{\Omega_{E/K}}\Bigr)^{2\cdot5^{2n}}
  \Bigl(\frac{{\Omega^\ast_{E'\!/\R}}}{{\Omega^\ast_{E/\R}}}\Bigr)^{5^{2n}} 
     \>\times \qquad \qquad \qquad \qquad \qquad
$$
$$
  \qquad \qquad \qquad \qquad \times\,
  \prod_{v|3}
  \frac{c_v(E'\!/K_n)}{c_v(E/K_n)}
  \prod_{v|5}
  \frac{c_v(E'\!/K_n)}{c_v(E/K_n)}
  \>\,
  \prod_{v|5}\>\,
  \biggl|
  \frac
  {{{\omega_{E',5}^{\min}}}/{\omega_{E',v}^{\min}}}
  {{{\omega_{E,5}^{\min}}}/{\omega_{E,v}^{\min}}}
  \biggr|_{v}.
$$
%
The prime $p=3$ is inert in $\Q(i)$ and in the 5-cyclotomic tower,
and the prime above it in $\Q(i)$ is totally split in the 5-anticyclotomic tower
of $\Q(i)$. So there are $5^n$ primes above $3$ in $K_n$. 
The curves $E$ and $E'$ have split multiplicative reduction at each of them
(of type $\In{1}$ and $\In{5}$), and so
$$
  \prod_{v|3}
  \frac{c_v(E'\!/K_n)}{c_v(E/K_n)} = 5^{5^n}.
$$
The Tamagawa numbers at $v|5$ in $K_n$ are coprime to 5 (potentially good reduction),
but they do contribute to the quotient of $\omega$'s. Specifically, 
there are two primes $v_n^+, v_n^-$ above $5$ in $K_n$ (one above $2+i$ and one above $2-i$ 
in $\Q(i)$), both with residue degree $5^n$ and ramification degree $5^n$.

Remark \ref{remdiffdisc} lets us the compute the $\omega$-term. 
For $v=v_n^\pm$,
$$
  \ord_v\Bigl(\frac{\omega_{E,v}^{\min}}{\omega_{E,5}^{\min}}\Bigr)=\frac{1}{12}\>
  \ord_v\Bigl(\frac{\Delta_{E/\Q}}{\Delta_{E/K_n}}\Bigr).
$$
The valuation $\ord_v(\Delta_{E/K_n})
\in\{0,1,...,11\}$ is
uniquely determined by the congruence  
$$
  \ord_v(\Delta_{E/K_n}) \equiv \ord_v(\Delta_{E/\Q}) \equiv \ord_v(5^4) \mod 12.
$$
Therefore,
$$
  \frac{1}{12}\>
  \ord_v\Bigl(\frac{\Delta_{E/\Q}}{\Delta_{E/K_n}}\Bigr) = \lfloor \frac{4\cdot 5^n}{12} \rfloor.
$$
Similarly, the corresponding term for $E'$ is $\lfloor \frac{8\cdot 5^n}{12} \rfloor$,
and we find that
$$
  \biggl|
  \frac
  {{{\omega_{E',5}^{\min}}}/{\omega_{E',v}^{\min}}}
  {{{\omega_{E,5}^{\min}}}/{\omega_{E,v}^{\min}}}
  \biggr|_{v} = 
  (5^{5^n})^
  {\lfloor \frac{8\cdot 5^n}{12} \rfloor - \lfloor \frac{4\cdot 5^n}{12} \rfloor}
   = (5^{5^n})^{\frac 13 5^n - \frac 13 (-1)^n}
   = 5^{\frac 13 5^{2n} - \frac 13 (-1)^n 5^n}.
$$
Combining everything together, we deduce that
\beq
  \displaystyle
\frac{|\Sel^{\div}_{5^\infty}(E/K_n)[\phi]|}
     {|\Sel^{\div}_{5^\infty}(E'\!/K_n)[\phi^t]|} 
\frac{|\shazero_{E/K_n}[5^\infty]|}{|\shazero_{E'\!/K_n}[5^\infty]|} 
  &=&\displaystyle
  {{1^{2\cdot 5^{2n}} \cdot 5^{-5^{2n}}}\cdot  5^{5^{n}}} \cdot
  [5^{\frac 13 5^{2n} - \frac 13 (-1)^n 5^n}]^2 \cr
    &=& \displaystyle 5^{-\frac13\,5^{2n} + (1-\frac23(-1)^n)\cdot 5^n}.
\eeq
\end{example}

\section{Minimal differentials}
\label{som}

In this section we investigate the behaviour of the last term 
in Corollary~\ref{selmain} (contribution from the exterior forms)
in towers of local fields. We study its valuation, denoted $\Om_\phi(\F)$ below,
and how it changes with $\F$.

\begin{definition}
\label{defom}
Let $\F/\K$ be a finite extension of $l$-adic fields, and write $v_\F, v_\K$ 
for their valuations. 
For an isogeny $\phi: A\to A'$ of abelian varieties over $\K$, define
$$
  \Om_\phi(\F) = 
    v_\F\Bigl(\frac{\omega_{A'\!/\F}^{\min}}{\omega_{A'\!/\K}^{\min}}\Bigr) - 
    v_\F\Bigl(\frac{\omega_{A/\F}^{\min}}{\omega_{A/\K}^{\min}}\Bigr), 
$$
so that
$$
  |k_\F|^{\Om_\phi(\F)} = 
  \biggl|
  \frac
  {{{\omega_{A'/\K}^{\min}}}/{\omega_{A'/\F}^{\min}}}
  {{{\omega_{A/\K}^{\min}}}/{\omega_{A/\F}^{\min}}}
  \biggr|_{\F},
$$
where $k_\F$ is the residue field of $\F$ and $|\cdot|_\F$ the normalised 
absolute value.
\end{definition}

\begin{lemma}
\label{semom0}
\noindent\par\noindent
\begin{enumerate}
\item[\llap{(1)\ \ }]{}\hskip-8pt 
If $A, A'\!/\K$ are semistable, then $\Om_\phi(\F)=0$ for every $\F/\K$.
\item[\llap{(2)\ \ }]{}\hskip-8pt 
If $\F'\!/\F/\K$ are finite and $\F'\!/\F$ is unramified, then 
$\,\Om_\phi(\F)\!=\!\Om_\phi(\F')$.
\end{enumerate}
\end{lemma}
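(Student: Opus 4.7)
The common input for both parts is the following functoriality of N\'eron models: for an abelian variety $A/\K$, the formation of the N\'eron model commutes with the base change $\cO_\K\to\cO_\F$ whenever either (i) $A/\K$ is semistable, or (ii) $\F/\K$ is unramified. Consequently, in either situation the pullback of $\omega_{A/\K}^{\min}$ still generates the module of translation-invariant differentials of the N\'eron model over $\cO_\F$, so the ratio $\omega_{A/\F}^{\min}/\omega_{A/\K}^{\min}$ lies in $\cO_\F^\times$ and has $v_\F$-valuation zero. Both statements of the lemma will follow once this single fact is applied in the right setting.

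For (1), apply the above to both $A$ and $A'$ over the semistable base $\K$: both valuations
$v_\F\bigl(\omega_{A/\F}^{\min}/\omega_{A/\K}^{\min}\bigr)$ and
$v_\F\bigl(\omega_{A'\!/\F}^{\min}/\omega_{A'\!/\K}^{\min}\bigr)$
vanish, so $\Om_\phi(\F)=0$ reads off directly from Definition \ref{defom}.

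For (2), since $\F'/\F$ is unramified, functoriality gives $\omega_{A/\F'}^{\min}/\omega_{A/\F}^{\min}\in\cO_{\F'}^\times$ and likewise for $A'$; moreover $e(\F'/\F)=1$, so $v_{\F'}$ restricted to $\F^\times$ coincides with $v_\F$. Hence
$$
  v_{\F'}\bigl(\omega_{A/\F'}^{\min}/\omega_{A/\K}^{\min}\bigr)
  = v_{\F'}\bigl(\omega_{A/\F}^{\min}/\omega_{A/\K}^{\min}\bigr)
  = v_\F\bigl(\omega_{A/\F}^{\min}/\omega_{A/\K}^{\min}\bigr),
$$
and the analogous identity holds for $A'$. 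Subtracting yields $\Om_\phi(\F')=\Om_\phi(\F)$.

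There is essentially no obstacle: once the N\'eron model functoriality is invoked, both parts are formal. The only thing to track carefully in (2) is the compatibility of the two valuations $v_\F$ and $v_{\F'}$, which is immediate from the unramified hypothesis $e(\F'/\F)=1$.
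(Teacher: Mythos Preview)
Your proof is correct and follows essentially the same approach as the paper: both rely on the fact that the N\'eron model (and hence the minimal exterior form) is preserved under base change when $A$ is semistable or when the extension is unramified. The paper states this in one line (``the minimal models\ldots stay minimal''), whereas you spell out the Né\-ron-model functoriality and, in part (2), carefully track the compatibility of $v_\F$ and $v_{\F'}$ via $e(\F'/\F)=1$; this is a helpful elaboration but not a different argument.
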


\begin{proof}
(1) The minimal models of $A, A'$ and the minimal exterior forms over $\K$
stay minimal over $\F$. (2) Ditto for $\F'\!/\F$.
\end{proof}

We now restrict our attention to elliptic curves. 
The term $v_\F\Bigl(\frac{\omega_{E/\F}^{\min}}{\omega_{E/\K}^{\min}}\Bigr)$ 
measures the extent to which the minimal Weierstrass model of $E/\K$ fails 
to stay minimal over $\F$, 
and $\Om_\phi(\F)$ is zero if the models
of $E$ and $E'$ change by the same amount. The relation to minimal 
discriminants is as follows:

\begin{notation}
Let $\phi: E\to E'$ be an isogeny of elliptic curves over $\K$. Write
$$
  \delta = v_\K(\Delta_{E/\K}), \quad
  \delta' = v_\K(\Delta_{E'\!/\K}), \quad
  \delta_\F = v_\F(\Delta_{E/\F}), \quad
  \delta'_\F = v_\F(\Delta_{E'\!/\F})
$$
for the valuations of the minimal discriminants.
\end{notation}

\begin{lemma}
\label{lemom}
If $\F/\K$ has ramification degree $e$, then
\beq
  \Om_\phi(\F)&=&
    \displaystyle
    \frac{e\delta'\!-\!\delta'_\F}{12}-\frac{e\delta\!-\!\delta_\F}{12}&=& 
    \displaystyle
    \frac {e(\delta'\!-\!\delta)}{12} - \frac{\delta'_\F\!-\!\delta_\F}{12}.    
\eeq
\end{lemma}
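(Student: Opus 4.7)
The plan is to reduce the lemma to Remark~\ref{remdiffdisc} applied separately to $E$ and $E'$, and then take the difference. The content of the lemma is essentially bookkeeping for how minimal differentials and minimal discriminants rescale when passing from $\K$ to $\F$.

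First, I would recall that any two Weierstrass models for an elliptic curve differ by a change of variables with scaling parameter $u$, and under such a change the associated invariant differential scales as $\omega \mapsto u\,\omega$ while the discriminant scales as $\Delta \mapsto u^{12}\Delta$. Applying this to $E$ with the two models coming from a minimal Weierstrass equation over $\K$ (viewed as a not-necessarily-minimal equation over $\F$) and a minimal equation over $\F$, one obtains the local form of Remark~\ref{remdiffdisc}: writing $\omega_{E/\F}^{\min} = u\,\omega_{E/\K}^{\min}$ for the relevant $u\in\F^\times$, the identity $v_\F(\Delta_{E/\K})-v_\F(\Delta_{E/\F}^{\min})=12\,v_\F(u)$ combined with $v_\F(\Delta_{E/\K})=e\delta$ (since $\F/\K$ has ramification $e$) and $v_\F(\Delta_{E/\F}^{\min})=\delta_\F$ yields
$$
  v_\F\Bigl(\frac{\omega_{E/\F}^{\min}}{\omega_{E/\K}^{\min}}\Bigr) \;=\; \frac{e\delta-\delta_\F}{12}.
$$

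Applying the same reasoning to $E'$, with discriminant valuations $\delta'$ and $\delta'_\F$, gives the analogous identity
$$
  v_\F\Bigl(\frac{\omega_{E'/\F}^{\min}}{\omega_{E'/\K}^{\min}}\Bigr) \;=\; \frac{e\delta'-\delta'_\F}{12}.
$$

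Subtracting these two identities according to Definition~\ref{defom} produces both expressions claimed in the lemma (the second one by simply regrouping $e\delta'-e\delta$ and $\delta'_\F-\delta_\F$). There is no real obstacle here; the only thing to be careful about is the normalisation of valuations, in particular that $v_\F$ restricted to $\K$ equals $e\,v_\K$, so that $v_\F(\Delta_{E/\K})=e\delta$ (and similarly for $E'$). Everything else is formal.
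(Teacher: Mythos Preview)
Your proposal is correct and follows essentially the same approach as the paper: both compute $v_\F(\omega_{E/\F}^{\min}/\omega_{E/\K}^{\min})=(e\delta-\delta_\F)/12$ via the standard relation between the scaling of the invariant differential and the discriminant under change of Weierstrass model (the paper cites \cite{Sil1} Table~III.1.2 directly, while you invoke the equivalent Remark~\ref{remdiffdisc}), and then subtract the analogous identity for $E'$.
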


\begin{proof}
Using \cite{Sil1} Table III.1.2, we find that 
$$
  v_\F\Bigl(\frac{\omega_{E/\F}^{\min}}{\omega_{E/\K}^{\min}}\Bigr)=\frac{1}{12}\>
  v_\F\Bigl(\frac{\Delta_{E/\K}}{\Delta_{E/\F}}\Bigr)
    = \frac{e\delta\!-\!\delta_\F}{12},
$$
and similarly for $E'$. 
\end{proof}

\begin{theorem}
\label{ommain}
Let $\F/\K$ be a finite extension of $l$-adic fields of ramification degree $e$,
and $\phi: E\to E'$ an isogeny of elliptic curves over $\K$.
Then
%
$$
  \Om_\phi(\F) = e\mu + \epsilon(\F),
$$
and
\begin{itemize}
\item[(1)]
$\mu=\epsilon(\F)=0$ if $l\nmid\deg\phi$, or $E/\K$ has good, potentially ordinary 
or potentially multiplicative reduction.
\end{itemize}
Suppose that $E$ has additive potentially good reduction.
Write $\delta=v_\K(\Delta_{E/\K})$, $\delta'=v_\K(\Delta_{E'\!/\K})$.
Write $\eth=0,2,3,4,6,8,9,10$ if $E$ has Kodaira type 
\In{0}, \II, \III, \IV, \InS{n\ge 0}, \IVS, \IIIS, \IIS{} respectively, 
and similarly $\eth'$ for $E'$. Then
\begin{itemize}
\item[(2)]
$\mu=\frac{\delta'-\delta}{12}$, $\epsilon(\F)=\{\frac{e\delta}{12}\}-\{\frac{e\delta'}{12}\}$ 
if $E$ has tame reduction.
\item[(3)]
$\mu=\frac{\eth'-\eth}{12}$,
$\epsilon(\F)=\{\frac{e\eth}{12}\}-\{\frac{e\eth'}{12}\}$ 
if $\F/\K$ is tamely ramified.
\item[(4)]
$\mu=\frac{\delta'-\delta}{12},\>
|\epsilon(\F)|\le\frac23$ if $l\ne 2$.
If, moreover, $3|e$, then $|\epsilon(\F)|\le\frac12$.
\item[(5)]
$\mu=\frac{\delta'-\delta}{12}$, 
$|\epsilon(\F)|<\frac{re_{\L/\K}+1}2$
if $l=2$. Here $r$ is any real number satisfying $r\!>\!\frac{f_{E/\K}}2\!-\!1$,
where $f_{E/\K}$ is the conductor exponent of $E$, and 
$\L$ is the subfield of $\F$ cut out by the upper ramification group~$I_\K^r$.
\end{itemize}
\end{theorem}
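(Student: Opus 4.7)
The starting point is Lemma~\ref{lemom}, which writes
$\Om_\phi(\F) = \frac{e(\delta'-\delta)}{12} - \frac{\delta'_\F-\delta_\F}{12}$,
exhibiting the natural candidate $e\mu = \frac{e(\delta'-\delta)}{12}$ for the
linear part and leaving the behaviour of $\delta_\F, \delta'_\F$ to account
for $\epsilon(\F)$. It is convenient to set
$\xi_E(\F) := (e\delta - \delta_\F)/12$, so that
$\Om_\phi(\F) = \xi_{E'}(\F) - \xi_E(\F)$, and to analyse each $\xi$ case by
case according to the reduction type of $E$ and the ramification of $\F/\K$.

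For part~(1), the aim is that either both $\xi_E$ and $\xi_{E'}$ vanish or
they coincide, so that $\Om_\phi(\F) \equiv 0$. In the good reduction case
$\delta = \delta_\F = 0$, and the same holds for $E'$ under either
$l \nmid \deg\phi$ or the assumption that $E$ is good ordinary (good
supersingular curves admit no $l$-isogeny, by Serre). For potentially
ordinary and potentially multiplicative reduction I would pass to a small
extension $\cM/\K$ over which $E$, and hence $E'$, becomes semistable, apply
Lemma~\ref{semom0}(1) to $\cM\F/\cM$ to deduce $\Om_\phi(\cM\F) = 0$, and
combine with the identity
$\Om_\phi(\cM\F) = e_{\cM\F/\F}\,\Om_\phi(\F) + \Om_{\phi/\F}(\cM\F)$
together with the results of \cite{isogloc} on isogeny-compatibility of
minimal differentials to conclude $\Om_\phi(\F) = 0$.

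Parts~(2) and~(3) are explicit computations via the transformation of Kodaira
types under base change (\cite{Sil1}~IV.9 or Tate's algorithm). For~(2),
additive potentially good tame reduction restricts the Kodaira type of $E/\K$
to $II, III, IV, I_0^*, IV^*, III^*, II^*$ with
$\delta = \eth \in \{2,3,4,6,8,9,10\}$; after base change to $\F$ the curve
remains additive potentially good or becomes good, so $\delta_\F$ is the
unique residue of $e\delta \bmod 12$ in $\{0,2,3,4,6,8,9,10\}$, and a direct
check shows this coincides with the least non-negative residue. Hence
$\xi_E(\F) = \lfloor e\delta/12\rfloor$, and subtracting the analogous
expression for $E'$ yields $\epsilon(\F) = \{e\delta/12\} - \{e\delta'/12\}$.
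Case~(3) allows wild reduction but requires $\F/\K$ tame; a tame base change
scales the wild part of $\delta$ linearly in $e$, this scaling cancels in
$(e\delta-\delta_\F)/12$, and only the tame index $\eth$ survives to control
the fractional part, giving the same formula with $\eth$ replacing $\delta$.

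The main work, and the principal obstacle, lies in parts~(4) and~(5), where
$\F/\K$ may be wildly ramified. The linear coefficient
$\mu = (\delta'-\delta)/12$ is still immediate, and the task is to bound
$|\delta_\F - \delta'_\F|$ \emph{independently of $[\F:\K]$}. For $l \ne 2$,
the joint Kodaira types of $E/\F$ and $E'/\F$ connected by an $l$-power
isogeny at an additive potentially good prime are restricted to a short
explicit list --- recorded in \cite{isogloc}~Table~1 --- forcing
$|\delta_\F - \delta'_\F| \le 8$ and hence $|\epsilon(\F)| \le 2/3$; a
further restriction when $3 \mid e$ trivialises the $3$-part of the type
transition and sharpens the bound to $1/2$. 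Case~(5), $l = 2$, is delicate
because $f_{E/\K}$ can be arbitrarily large and the wild Kodaira list is
correspondingly longer. The plan is to use the upper-numbering filtration:
take $\L$ to be the fixed field of $I_\K^r$ in $\F$ with
$r > f_{E/\K}/2 - 1$, apply part~(3) to the tame part $\L/\K$, and bound
the wild contribution from $\F/\L$ via Serre's Artin conductor calculation,
which caps the wild defect by $r\, e_{\L/\K}$. Combined with the half-unit
of slack from Weierstrass minimality this produces the stated bound
$(r e_{\L/\K} + 1)/2$. The careful ramification bookkeeping at $l = 2$
is where most of the technical difficulty resides.
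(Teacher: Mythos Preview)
Your overall shape is right --- Lemma~\ref{lemom} gives the linear term and the problem reduces to controlling $\delta_\F-\delta'_\F$ --- but several of the case arguments have genuine gaps.

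\textbf{Part (1).} The claim that good supersingular curves admit no $l$-isogeny ``by Serre'' is a statement over $\Q$, not over a general $l$-adic field $\K$; it is both false here and unnecessary. If $E/\K$ has good reduction then so does $E'$ (isogenous curves have the same reduction type), so $\delta=\delta'=\delta_\F=\delta'_\F=0$ and there is nothing to do. For the potentially ordinary and potentially multiplicative cases, your pass-to-semistable-$\cM$ manoeuvre does not close: Lemma~\ref{semom0}(1) gives $\Om_{\phi/\cM}(\cM\F)=0$, not $\Om_\phi(\cM\F)=0$, and your transitivity identity then leaves you needing exactly the thing you are trying to prove. The paper instead reads off from \cite{isogloc} Table~1 that $\delta=\delta'$ (potentially good, $l\nmid\deg\phi$ or potentially ordinary) or that $\delta'-\delta$ is the $j$-invariant valuation difference, which scales by $e$ (potentially multiplicative); either way $(\dagger)$ vanishes directly.

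\textbf{Part (4).} Your route through \cite{isogloc} Table~1 controls $|\delta_\F-\delta'_\F|$ only for prime-degree isogenies, and the bound does not telescope when you compose. The paper's argument avoids this: since $E$ and $E'$ have the same conductor over $\F$, Ogg's formula gives $\delta_\F-\delta'_\F=m-m'$ with $m,m'$ the component counts, and for $l\neq 2$ there is no $\InS{n>0}$ type, so $1\le m,m'\le 9$ and $|\delta_\F-\delta'_\F|\le 8$. The $3\mid e$ refinement is that $\delta_\F\equiv e\delta\pmod{12}$ forces $3\mid\delta_\F,\delta'_\F$, hence $|\delta_\F-\delta'_\F|\le 6$.

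\textbf{Part (5).} This is where your plan fails outright. The field $\L=\F^{I_\K^r}$ is \emph{not} in general a tame extension of $\K$ --- for $r>0$ the group $I_\K^r$ is a wild ramification subgroup, so $\L/\K$ can be as wildly ramified as you like --- so you cannot apply part~(3) to it. The paper's argument is entirely different: Ogg's formula plus Papadopoulos \cite{Pap} give $f_{E/\F}\le\delta_\F\le 4f_{E/\F}$, hence $|\delta_\F-\delta'_\F|\le 3f_{E/\F}$ (same conductor for $E,E'$), and then the appendix result Theorem~\ref{ellcond} bounds $f_{E/\F}=f_{E/\L}\le e_{\L/\K}(f_{E/\K}-2)+2$. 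That conductor-in-extensions theorem is the real content of case~(5), and your sketch does not touch it.
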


\begin{proof}
By Lemma \ref{lemom},
$$
  \Om_\phi(\F)\>\>=\>\>
    \frac{e\delta'\!-\!\delta'_\F}{12}-\frac{e\delta\!-\!\delta_\F}{12}\>\>=\>\> 
    \frac {e(\delta'\!-\!\delta)}{12} + \frac{\delta_\F\!-\!\delta'_\F}{12}.    
    \eqno{(\dagger)}
$$
The claims are trivial if $\phi$ is an endomorphism $E\to E$. 
Decomposing the isogeny if necessary, 
it is clear that in (1)-(3) we may assume that $\deg\phi=p$ is prime.

(1) 
Theorem 5.1 of \cite{isogloc} (or \cite{isogloc} Table 1)
describes the change in the discriminant under isogenies of prime degree. 
If $E$ has potentially good reduction, and either $l\ne p$ or 
$E$ has good or potentially ordinary reduction, then
$\delta=\delta', \delta_\F=\delta'_\F$. 
If $E$ has potentially multiplicative reduction, then 
$\delta'-\delta=v_\K(j_E)-v_\K(j_{E'})$, 
and similarly
$\delta'_\F-\delta_\F=e(v_\K(j_E)-v_\K(j_{E'}))$. 
In both cases the right-hand side of $(\dagger)$ is 0.

(2) If $E/\K$ has tame reduction, the reduction stays tame over $\F$.
Furthermore, $0\le \delta,\delta',\delta_\F,\delta'_\F<12$ 
by \cite{isogloc} Thm. 3.1. Because the discriminant changes by 12th powers 
when changing the model, 
$$
  \delta_\F = 12\{ \frac{e\delta}{12} \}, \qquad
  \delta'_\F = 12\{ \frac{e\delta'}{12} \}, 
$$
and $(\dagger)$ implies the asserted formula.

(3) By \cite{tate} Thm. 3,
$$
  \delta_\F=e\delta - 12 \lfloor \frac{e \eth}{12} \rfloor, \qquad
  \delta'_\F=e\delta' - 12 \lfloor \frac{e \eth'}{12} \rfloor,
$$
and the claim follows from $(\dagger)$.

(4) Write $m, m'$ for the number of components of the N\'eron minimal models
of $E/\F$ and $E'\!/\F$. As $l\ne 2$, by \cite{Sil2} \S IV.9 Table 4.1
the curves $E, E'$ do not have Kodaira type $\InS{n>0}$, and $1\le m,m'\le 9$.
Since $E$ and $E'$ have the same conductor exponent (over $\F$), by Ogg's formula 
(\cite{Sil2} IV.11.1)
$$
  |\delta_\F-\delta'_\F| = |m-m'|\le 8.
$$
Therefore by $(\dagger)$,
$$
  \Om_\phi(\F)
    = \frac {e(\delta'-\delta)}{12} + \epsilon(\F), \qquad |\epsilon(\F)|\le \frac 23.
$$
If, moreover, $3|e$, then $3|\delta_\F, \delta'_\F$. In this case 
$|\delta_\F-\delta'_\F|\le 6$ and $|\epsilon(\F)|\le \frac 12$.

(5) 
By Ogg's formula and \cite{Pap} Thm. 2, we have
$f_{E/\F}\le\delta_\F\le 4f_{E/\F}$. As $E$ and $E'$ have the same conductor,
the same bounds hold for $\delta'_\F$, and so 
$|\delta_\F-\delta'_\F|\le 3f_{E/\F}$. By Theorem \ref{ellcond} in the appendix,
$$
  f_{E/\F} \>=\> f_{E/\L} \>\le\> e_{\L/\K} (f_{E/\K}-2)+2.
$$
Therefore
$$
  \frac{|\delta_\F-\delta'_\F|}{12}  \>\>\le\>\>
  \frac{e_{\L/\K}}4(f_{E/\K}-2)+\frac12 \>\><\>\>
  \frac{e_{\L/\K}}4\cdot 2r+\frac12 \>\>=\>\> \frac{re_{\L/\K}+1}2.
$$ 
The claim follows from $(\dagger)$, 
with $\epsilon(\F)=\frac{\delta_\F-\delta'_\F}{12}$.
\end{proof}

%


\begin{remark}
Note that in Theorem \ref{ommain} (3), the formula
$\mu=\frac{\eth'-\eth}{12}$ may \emph{not} be replaced by
$\frac{\delta'-\delta}{12}$. 
For example, the 2-isogenous curves 64A1, 64A4 over $\Q_2$
have type $\InS{2}$, $\delta=12$, $\eth=6$ and type $\II$, $\delta'=6$, $\eth'=2$, 
respectively, and $\delta'-\delta\ne \eth'-\eth$.
If $E/\K$ has tame reduction, e.g. if $\K$ has residue characteristic $\ge 5$, 
then $\delta=\eth$ and the two formulae are the same.
\end{remark}

\begin{corollary}
\label{alphacyc}
Let $\K_n=\Q_l(p^n)$ be the completion of the
$n$th layer of the~\hbox{$p$-cyclotomic} tower at a prime above~$l$,
and let $\phi: E\to E'$ be an isogeny of elliptic curves over $\Q_l$.
Then
$$
  \Om_\phi(\K_n)=p^n\mu + \epsilon(n),
$$
and $\mu=\epsilon(n)=0$ unless $l=p\,|\deg\phi$ and $E$ has additive potentially
super\-singular reduction. In this exceptional case,
%
%
writing $\delta=\ord_l\Delta_{E/\Q_l}$ and $\delta'=\ord_l\Delta_{E'\!/\Q_l}$,
we have
$$
  \mu=\frac{\delta'-\delta}{12}, \qquad 
  |\epsilon(n)|\le
  \left\{
  \begin{array}{llllll}
  \sfrac23 & \text{if $l\ge 5$}\cr
  \sfrac12 & \text{if $l=3$}\cr
  6\sfrac12 &     \text{if $l=2$.}\cr
  \end{array} 
  \right.
$$
If, moreover, $E$ has tame reduction, then 
$\epsilon(n)=\{\frac{p^n\delta}{12}\}-\{\frac{p^n\delta'}{12}\}$.
\end{corollary}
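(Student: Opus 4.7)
The plan is to derive the corollary directly from Theorem \ref{ommain} applied with $\F = \K_n$ over $\K = \Q_l$. The cyclotomic layer $\Q(p^n)/\Q$ is totally ramified at $p$ and unramified at every $l \ne p$, so $\K_n/\Q_l$ has ramification degree $e = 1$ when $l \ne p$ and $e = p^n$ when $l = p$; under this identification the term $e\mu$ in Theorem \ref{ommain} becomes the $p^n\mu$ of the corollary. I would first reduce to the case where $\deg\phi$ is a power of $p$, which is the standard setting for $\Om_\phi$ in the BSD-isogeny analysis of Corollary \ref{selmain}.

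The case distinction then mirrors Theorem \ref{ommain}. When $l \ne p$ (so $l \nmid \deg\phi$), or when $l = p$ and $E$ has good, potentially ordinary, or potentially multiplicative reduction, part (1) collapses everything to $\mu = \epsilon(n) = 0$. The only remaining case is $l = p \mid \deg\phi$ with $E$ additive potentially supersingular; there each of parts (2)--(5) yields $\mu = (\delta'-\delta)/12$ as asserted, so the task reduces to extracting the bound on $\epsilon(n)$.

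For $p \ge 5$ the reduction is automatically tame, and Theorem \ref{ommain}(2) supplies both the exact formula $\epsilon(n) = \{p^n\delta/12\} - \{p^n\delta'/12\}$ (which is the final clause of the corollary) and the bound $|\epsilon(n)| \le 2/3$. For $p = 3$, part (4) applies with $3 \mid e = 3^n$, yielding the sharper $|\epsilon(n)| \le 1/2$; if $E$ happens to be tame then (2) again gives the closed formula.

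The main obstacle is $p = 2$, which requires part (5). I would invoke the Brumer--Kramer bound $f_{E/\Q_2} \le 8$, so that any $r > 3$ is admissible, and then identify the subfield $\L \subseteq \K_n$ cut out by $I_{\Q_2}^r$ via local class field theory. Writing $\Gal(\K_n/\Q_2)$ as the quotient of $\Z_2^\times$ by $N = \{\pm 1\}\cdot(1+2^{n+2}\Z_2)$ and using the standard correspondence $I_{\Q_2}^r \leftrightarrow U^r = 1+2^{\lceil r\rceil}\Z_2$, a brief computation (using that $\{\pm 1\}\subset 1+2\Z_2$ while $\{\pm 1\}\cap(1+4\Z_2)=\{1\}$) shows that for $r \in (3,4]$ and $n \ge 2$ the image of $U^r$ in $\Gal(\K_n/\Q_2)$ has index exactly $4$, whence $[\L:\Q_2] = e_{\L/\Q_2} = 4$. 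Substituting into $|\epsilon(\K_n)| < (re_{\L/\K}+1)/2$ and letting $r \to 3^+$ gives $|\epsilon(n)| \le 13/2$, matching the stated bound (for small $n$ the same calculation yields a strictly better bound, so $13/2$ is uniform).
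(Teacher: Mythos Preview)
Your approach is essentially the paper's: apply Theorem \ref{ommain} with $\F=\K_n$, $\K=\Q_l$, using $e=p^n$ when $l=p$ and $e=1$ otherwise, then read off the exceptional-case bounds from parts (2)--(5). Two small corrections.

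First, the reduction to $p$-power degree is unnecessary and, as written, not self-contained: to justify it you would still need to show that the prime-to-$p$ part of the isogeny contributes nothing to $\Om_\phi$, and for $l\ne p$ that comes down to the unramifiedness of $\K_n/\Q_l$ anyway. The paper simply invokes Lemma \ref{semom0}(2) directly for $l\ne p$ (since $\K_n/\Q_l$ is then unramified), and Theorem \ref{ommain}(1) for the remaining non-exceptional cases; this is cleaner.

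Second, for $p\ge 5$ you cite part (2) for the bound $|\epsilon(n)|\le 2/3$, but (2) only gives the formula $\epsilon(n)=\{p^n\delta/12\}-\{p^n\delta'/12\}$; extracting $2/3$ from this requires knowing that $\delta_{\K_n},\delta'_{\K_n}$ either both vanish or both lie in $\{2,\ldots,10\}$, which is the Ogg's-formula step underlying part (4). The paper therefore cites (4) for both $l\ge 5$ and $l=3$ (the latter with $3\mid e$ for the sharper $1/2$). Your treatment of $l=2$ via part (5), the conductor bound $f_{E/\Q_2}\le 8$, and the class-field-theory computation of $e_{\L/\Q_2}=4$ matches the paper exactly (the paper states $\L=\Q_2(2^2)$ without the explicit unit-group calculation).
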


\begin{proof}
If $l\ne p$, then $\Om_\phi=0$ by Lemma \ref{semom0}(2).
If $l\nmid\deg\phi$, or $E/\K$ has good, potentially ordinary 
or potentially multiplicative reduction, then $\Om_\phi=0$ as well, by 
Theorem \ref{ommain}(1). Assume henceforth that $l=p\,|\deg\phi$ and 
$E$ has additive potentially supersingular reduction.

The last claim for $\epsilon(n)$ is contained in Theorem \ref{ommain} (2),
and the assertion for $l\ge 3$ is proved in Theorem \ref{ommain} (4).

Finally, suppose $l=2$.
By Theorem \ref{ommain} (5), the asserted equality holds 
with $|\epsilon(n)|< \frac{re+1}2$, where $r>\frac{f_{E/\Q_2}}2-1$ and
$e$ is the ramification degree of the field cut out by $I_{\Q_2}^r$.
By \cite{LRS}, we have $f_{E/\Q_2}\le 2+6v_{\Q_2}(2)=8$. 
So we can take $3<r<4$, $\L=\Q_2(2^2)$ and $e=4$
(or $\L=\Q_2(2)$, $e=2$ if $n=1$). Then
$|\epsilon(n)|<\frac{re+1}2\le \frac{4r+1}2$, 
which gives the bound of 6.5 as $r\to 3$.
\end{proof}

\begin{remark}
\label{om3}
If $l=p>3$, then every curve over $\K$ with potentially good reduction
is tame, so $\epsilon(n)$ in Corollary \ref{alphacyc} is explicit.
If $l\ne p$, it is zero, and for $l=p=3$ it can be made explicit as well
using Ogg's formula as follows:

Suppose $E/\K_n$ has wild reduction for all $n$.
The conductor exponent of $E/\K_n$ stabilises by 
Corollary \ref{corliestab} in the appendix, and 
the number of components $m$ on the N\'eron minimal model satisfies
$1\le m\le 9$ (cf. proof of Thm \ref{ommain} (4)). 
Therefore the congruence $\delta_{\K_n}\equiv e_{\K_n/\Q_l} \delta\mod 12$ 
determines both $m$ and $\delta_{\K_n}$ uniquely. The same is true for $E'$, 
and we get an explicit formula for 
$
  \epsilon(n)=\frac{\delta_{\K_n}-\delta'_{\K_n}}{12}
$
for large $n$.

Let us also observe that $\epsilon(n)\in\{-\frac 12,0,\frac 12\}$ for large $n$
in this case.
Indeed, the ramification degree $e_{\K_n/\Q_l}$ is divisible by 3,
and so $\delta_{\K_n}-\delta'_{\K_n}$ is also divisible by 3. 
By Lemma~\ref{lemper} and Theorem \ref{ommain} (4), 
$$
  |\frac{\delta_{\K_n}-\delta'_{\K_n}}{12}|=|\epsilon(n)|\le\frac 12,
$$
and so $|\delta_{\K_n}\!-\!\delta'_{\K_n}|\le 6$.
Moreover, $\delta_{\K_n}\!-\!3\delta'_{\K_n}\equiv 0\mod 4$ 
(\cite{isogloc} Thm~1.1), so 
they have the same parity, whence
$\delta_{\K_n}-\delta'_{\K_n}\in\{-6,0,6\}$.

%
\end{remark}

\section{Tamagawa numbers}
\label{stam}

We now turn to the behaviour of the Tamagawa number quotient 
from Corollary \ref{selmain} in towers of $l$-adic fields.

\begin{theorem}
\label{tammain}
Let $\K=\K_0\subset \K_1\subset \K_2\subset\ldots$ be a tower of $l$-adic fields, 
and $\phi: E\to E'$ an isogeny of elliptic curves over~$\K$.
Then the sequence
$$
  \frac{c_{E'\!/\K_0}}{c_{E/\K_0}}, \quad
  \frac{c_{E'\!/\K_1}}{c_{E/\K_1}}, \quad
  \frac{c_{E'\!/\K_2}}{c_{E/\K_2}}, \ldots
$$
stabilises unless $l|\deg\phi$, $e_{\K_n/\K}\to\infty$ 
and $E/\K$ has wild potentially supersingular reduction (in particular $l=2,3$). 
In this exceptional case, and in all other cases when $E$ has potentially good
reduction, all terms in the sequence are in $\{1,2,3,4,\frac12,\frac13,\frac14\}$.
\end{theorem}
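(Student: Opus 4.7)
The plan is to reduce to prime-degree isogenies and then handle cases by the reduction type of $E/\K$, using \cite{isogloc} both to describe Kodaira-type/Tamagawa-number matching between $E$ and $E'$ under prime-degree isogeny, and to bound the possible configurations.

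First I would decompose $\phi$ into prime-degree isogenies; since the quotient $c_{E'\!/\F}/c_{E/\F}$ is multiplicative under composition, it suffices to treat $\deg\phi=p$ prime. If $p\ne l$, then \cite{isogloc} matches the Kodaira types of $E$ and $E'$ over every extension $\K_n$ and expresses the Tamagawa-number quotient in a form that depends only on the reduction type together with the split/non-split data, yielding stabilisation directly; this also falls within the non-exceptional regime of the theorem, so I would then focus on $p=l$.

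Next I would handle the potentially multiplicative case via Tate uniformisation: $E_{/\bar\K}=\G_m/q^\Z$ and $E'_{/\bar\K}=\G_m/(q')^\Z$ with $v_\K(q),v_\K(q')$ related by a factor of $p^{\pm 1}$. Once $\K_n$ is large enough that both curves have split multiplicative reduction (which happens at the latest after the quadratic unramified extension of the field that trivialises the ramified Galois twist), the Kodaira types are $I_{v_n(q)}$ and $I_{v_n(q')}$ and the Tamagawa numbers are $v_n(q)=e_n v_\K(q)$ and $v_n(q')=e_n v_\K(q')$. The quotient equals the constant $v_\K(q')/v_\K(q)$, giving stabilisation.

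For potentially good reduction, additive Kodaira types are drawn from the finite list $\In{0},\II,\III,\IV,\InS{0},\IVS,\IIIS,\IIS$ with Tamagawa numbers in $\{1,2,3,4\}$, so \emph{a priori} the quotient lies in $\{1,2,3,4,\tfrac12,\tfrac13,\tfrac14,\tfrac23,\tfrac32,\tfrac34,\tfrac43\}$. To exclude $\tfrac 23,\tfrac 32,\tfrac 34,\tfrac 43$ I would use the \cite{isogloc} tables: a prime-degree isogeny forces the Kodaira type pairs of $E$ and $E'$ to lie in a short list of compatible configurations, each of which produces a quotient in the asserted seven-element set. For stabilisation of the quotient under this case, the key input is Tate's formula $\delta_{\K_n}=e_n\delta-12\lfloor e_n\eth/12\rfloor$ (and its analogue for $E'$); in the tame-reduction setting this makes the Kodaira type of $E/\K_n$ depend only on $e_n\bmod 12/\gcd(\eth,12)$, and for any tower of $l$-adic fields the residue $e_n\bmod 12$ is eventually periodic, so the Tamagawa-number pair eventually cycles and the quotient stabilises after a suitable identification. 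Good (hence potentially ordinary) reduction reaches $c=1$ over a sufficiently large $\K_n$ and is handled similarly.

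The main obstacle is the remaining case: $l=p\mid\deg\phi$ with $E/\K$ of wild potentially supersingular reduction and $e_{\K_n/\K}\to\infty$, where wild ramification data affects $\delta_{\K_n}$ and $\delta'_{\K_n}$ in ways that need not be commensurable. Here I would concede that stabilisation can genuinely fail, but use the same compatibility-of-pairs argument as above to confirm the uniform bound of the quotient in $\{1,2,3,4,\tfrac12,\tfrac13,\tfrac14\}$. That $l\in\{2,3\}$ follows since for $l\ge 5$ all reduction is tame (as noted in the Notation section), so wild potentially supersingular reduction is impossible.
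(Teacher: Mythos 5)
Your overall architecture (reduce to prime degree, split by reduction type, consult \cite{isogloc}) is in the right spirit, but the stabilisation step for tame potentially good reduction has a genuine gap. You invoke Tate's formula $\delta_{\K_n}=e_n\delta-12\lfloor e_n\eth/12\rfloor$ to conclude that the Kodaira type of $E/\K_n$ depends on $e_n\bmod 12$, and then assert that ``$e_n\bmod 12$ is eventually periodic, so the Tamagawa-number pair eventually cycles and the quotient stabilises after a suitable identification.'' This is a non sequitur: ``eventually periodic'' (even granting it, which itself isn't obvious for an arbitrary divisibility chain $e_0\mid e_1\mid\cdots$) is not ``eventually constant,'' and the theorem claims the actual sequence of quotients stabilises, not that it stabilises along some subsequence or ``after identification.'' If the Kodaira-type pair of $(E,E')$ genuinely cycled with period $>1$, the quotient would cycle as well and the theorem would be false. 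The paper avoids this entirely by a different argument: Table~1 of \cite{isogloc} expresses the quotient $c_{E'}/c_E$ for a prime-degree isogeny as a function of coarse conditions (good/ordinary/additive/split multiplicative, presence of $3$-torsion, $v(j_E)=p\,v(j_{E'})$, etc.) rather than of the precise Kodaira type, and each of those conditions is monotone or eventually constant along the tower. No periodicity analysis of Kodaira types is needed. Your potentially multiplicative paragraph has a smaller version of the same issue: you assume both curves become split multiplicative over $\K_n$ for $n$ large, but a ramified quadratic twist need not become trivial along an arbitrary tower of $l$-adic fields; again the stable-conditions formulation handles this without the assumption.

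The bound $\{1,2,3,4,\tfrac12,\tfrac13,\tfrac14\}$ also needs more care than ``use the \cite{isogloc} tables to get a short list of compatible Kodaira-type pairs for prime-degree isogenies.'' As stated, that only controls each prime step; since membership in the seven-element set is not closed under products, composing prime-degree quotients does not obviously keep you in the set (e.g.\ $2\cdot 3$). The paper's argument is degree-independent: for potentially good reduction both Tamagawa numbers lie in $\{1,2,3,4\}$, so it suffices to rule out the pair $(3,2)$ or $(3,4)$; having $c=3$ forces type $\IV$ or $\IVS$, while $c\in\{2,4\}$ forces type $\III,\IIIS,\IZS,\InS{n}$, and these two families cannot contain isogenous curves because one side is tamely and the other wildly ramified when $l=2$ (\cite{KT}), vice versa when $l=3$ (\cite{Kra}), and for $l>3$ the Kodaira type is an isogeny invariant. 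You would need to replace your per-step compatibility list with an argument of this kind that applies directly to the pair $(E,E')$ regardless of $\deg\phi$.
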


\begin{proof}
First, if $e_{\K_n/\K}\not\to\infty$, then the extensions $\K_{n+1}/\K_n$ are 
eventually unramified, so the Tamagawa numbers of $E$ and $E'$ stabilise.

Next, suppose either $l\nmid \deg\phi$ or $E$ is not wild 
potentially supersingular. If $E=E'$ there is nothing to prove. Otherwise,
by decomposing $\phi$ into endomorphisms and isogenies of prime degree 
if necessary, we may assume that $\deg\phi=p$ is prime. Now apply the 
classification for the quotient $\frac{c}{c'}$ from \cite{isogloc} Table 1.
All the conditions in the table that determine $\frac{c}{c'}$ 
(e.g. that $E$ is good, ordinary, additive,
split multiplicative, has non-trivial 3-torsion, $v(j_E)=p v(j_{E'})$, etc.) 
stabilise in the tower $\K_n$, and hence so does the quotient. 

Now suppose that $E, E'$ have potentially good reduction. In particular, the
reduction is good or additive over all $\K_n$, and so 
$1\le c_{E/\K_n}, c_{E'\!/\K_n}\le 4$
(\cite{Sil2} \S IV.9, Table~4.1).
To prove the claim, it suffices to check
that if one of them, say $c_{E/\K_n}$ is 3, the other cannot be 2 or 4.
Indeed, $c_{E/\K_n}=3$ implies that $E$ has Kodaira type \IV{} or \IVS{}
and $c_{E'/\K_n}\in\{2,4\}$ that $E'$ has 
type $\III,\IIIS,\IZS,\InS{n}$ (loc. cit.).
Such curves cannot be isogenous, since (a) for $l=2$ the former types are tame
and the latter are wild (\cite{KT} Prop 8.20), 
(b) for $l=3$ the former are wild
and the latter are tame (\cite{Kra} Thm.~1), 
(c) for $l>3$ the curve $E'$ also has type \IV{} or \IVS{},
e.g. from \cite{isogloc} Table 1 or by considering the valuations of 
minimal discriminants and the smallest fields where the curves 
acquire good reduction.
%
%
\end{proof}

\begin{remark}
If $\deg\phi$ is a prime $p\ge 5$, then the quotient $\frac{c(E'\!/\K_n)}{c(E/\K_n)}$ 
is particularly simple: it stabilises to

\begin{itemize}
\itemindent -1em
\item 
$p$ if $E/\K_n$ has split multiplicative reduction for some $n$, and 
    $\frac{v_\K(j_{E'})}{v_\K(j_{E})}=p$.
\item 
$\frac 1p$ if $E/\K_n$ has split multiplicative reduction for some $n$, and 
    $\frac{v_\K(j_{E'})}{v_\K(j_{E})}\!=\!\frac 1p$.
\item 1 in all other cases.    
\end{itemize}
\end{remark}

\begin{corollary}
\label{ellcyctam}
If $\phi: E\to E'$ is an isogeny of elliptic curves over $\Q$, 
and $\Q(l^n)$ is the $n$th layer in the $l$-cyclotomic tower, then the sequence
$$
  \prod_v\frac{c_v(E'\!/\Q(l^n))}{c_v(E/\Q(l^n))} 
$$
stabilises, unless $l|\deg\phi$ and $E$ has wild potentially supersingular
reduction at $l$ (in particular, $l=2,3$). In this exceptional case,
for all sufficiently large $n$,
the terms are of the form $C\cdot \alpha_n$ for some constant $C$ and some
$\alpha_n\in \{1,2,3,4,\frac12,\frac13,\frac14\}$.
\end{corollary}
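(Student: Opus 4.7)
The plan is to decompose the product along the rational primes $p$ lying below $v$. Only finitely many $p$ contribute, namely the primes of bad reduction of $E$ (equivalently $E'$), so it suffices to show that for each such $p$ the sub-product $\prod_{w\mid p} c_w(E'\!/\Q(l^n))/c_w(E/\Q(l^n))$ stabilises, with a single possible exceptional local factor when $p=l$.

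First take $p\ne l$. The cyclotomic $\Z_l$-extension $\bigcup_n\Q(l^n)$ is unramified at~$p$, so every completion $\Q(l^n)_w$ is unramified over $\Q_p$. I would observe that the decomposition group $D_p\subset\Gal(\bigcup_n\Q(l^n)/\Q)\cong\Z_l$ is open: under $\Z_l^\times=\mu_{l-1}\times(1+l\Z_l)$ for $l$ odd (and $\Z_2^\times=\{\pm1\}\times(1+4\Z_2)$ for $l=2$) the $\Z_l$-quotient corresponds to the second factor, and the image of $p$ there is non-trivial---otherwise $p$ would be a root of unity in $\Z_l^\times$, which is impossible for a rational integer $p\ge 2$. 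Hence the number of primes above $p$ in $\Q(l^n)$ is bounded and eventually constant. Fixing one such $w$, the completion tower at $w$ is unramified, so $e_{\K_n/\K}=1$ in the notation of Theorem~\ref{tammain}; the exceptional clause of that theorem therefore does not apply, and $c_w(E'\!/\Q(l^n))/c_w(E/\Q(l^n))$ stabilises. All $w\mid p$ give isomorphic completions, so $\prod_{w\mid p}$ stabilises.

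For $p=l$ there is a unique prime $w\mid l$ in each $\Q(l^n)$, and its completion tower over $\Q_l$ is totally ramified of degrees $l^n$. Theorem~\ref{tammain} gives stabilisation outside the exceptional case, i.e.\ when $l\nmid\deg\phi$ or $E/\Q_l$ does not have wild potentially supersingular reduction. In the exceptional case $E$ and $E'$ both have potentially good reduction at $l$, so all Tamagawa numbers lie in $\{1,2,3,4\}$ and $\alpha_n:=c_w(E'\!/\Q(l^n))/c_w(E/\Q(l^n))\in\{1,2,3,4,\frac12,\frac13,\frac14\}$. Multiplying everything together gives either a stable product, or $C\cdot\alpha_n$ in the exceptional case with $C$ the stable product coming from the primes $p\ne l$.

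The one ingredient beyond Theorem~\ref{tammain} is the openness of $D_p$ for $p\ne l$, which reduces each such prime's contribution to finitely many identical unramified local towers; this is the key (and short) observation, and without it the product over $w\mid p$ could a priori grow like $(c_{p}(E'\!/\Q)/c_p(E/\Q))^{l^n}$.
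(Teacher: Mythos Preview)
Your proof is correct and follows the intended approach: the paper states the corollary without proof, so the implicit argument is precisely to apply Theorem~\ref{tammain} at each rational prime of bad reduction, noting that for $p\ne l$ the local tower is unramified and the number of primes above $p$ stabilises (your openness-of-$D_p$ observation), while for $p=l$ there is a single totally ramified prime.

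One small slip: in the exceptional case you write that potentially good reduction gives $c_w\in\{1,2,3,4\}$ and \emph{hence} $\alpha_n\in\{1,2,3,4,\tfrac12,\tfrac13,\tfrac14\}$. The second step is not automatic---a priori the ratio could be $2/3$ or $3/4$ etc. But you need not argue this yourself: Theorem~\ref{tammain} already asserts that in the potentially good case the ratio lies in the stated set (its proof excludes the mixed ratios by a Kodaira-type comparison showing that type \IV/\IVS{} curves cannot be isogenous to type \III/\IIIS/\InS{n} curves). Simply cite the theorem for this conclusion rather than deducing it from the individual Tamagawa bounds.
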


\begin{remark}
For elliptic curves with wild potentially supersingular reduction,
the quotient of Tamagawa numbers might {\em not} stabilise. 

As an example, take the 3-isogeny $E=243A1\to 243A2=E'$ over $\Q(3^n)$,
and consider the Tamagawa numbers at primes above 3.
As in Remark \ref{om3}, one may compute the minimal discriminants and
the number of components of the N\'eron minimal model (and thus the
Kodaira types). We find that the Kodaira types of $E, E'$ alternate between 
$\IVS, \II$ and $\IIS, \IV$. The Tama\-gawa numbers for types \II{} and \IIS{} are
always 1 (\cite{Sil2} \S IV.9, Table 4.1), and they turn out to be 3 
for the $\IV, \IVS$ cases, so that the Tamagawa quotient alternates between
$3$ and $\frac 13$.
(To see that they are 3, we use the fact that 
over a local field $\K/\Q_3$,
the parity of 
$\ord_3\frac{c(E'/\K)}{c(E/\K)}$ can be recovered $\Om_\phi(\K)$, the local root 
number $w(E/\K)$, and the Artin symbol $(-1,\K(\ker\phi^t)/\K)$; see 
\cite{kurast}~Thm 5.7. In our case, $\Om_\phi(\K)$ is computed as in 
Remark \ref{om3}, the local root number is $+1$ over $\Q_3$ and is unchanged 
in odd degree Galois extensions, and the points in $\ker\phi^t$ 
are defined over $\Q$, so that the Artin symbol is trivial.)
%
\end{remark}

Finally, we record the fact that Theorem \ref{tammain} also holds for 
semistable abelian varieties:

\begin{theorem}
Let $\K\subset \K_1\subset \K_2\subset\ldots$ be a tower of $l$-adic fields, 
and $\phi: A\to A'$ an isogeny of semistable abelian varieties over $\K$.
Then the quotient of Tamagawa numbers $\frac{c(A'\!/\K_n)}{c(A/\K_n)}$ 
stabilises as $n\to\infty$.
\end{theorem}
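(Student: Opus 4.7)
The plan is to apply Grothendieck's monodromy pairing from SGA 7 IX, whose behaviour under ramified base change yields the claim almost immediately. First separate the easy subcase: if the ramification indices $e_n = e(\K_n/\K)$ are bounded, they are eventually constant, so $\K_{n+1}/\K_n$ is eventually unramified. Semistability is preserved under base change, and under unramified extensions the character lattice of the toric part of the N\'eron special fibre and the monodromy pairing are unaltered, so the geometric component group $\Phi_A$ stays the same. The Tamagawa number $c(A/\K_n)=|\Phi_A^{\mathrm{Frob}_{k_{\K_n}}}|$ is then a non-decreasing sequence of positive integers bounded by $|\Phi_A|$, and so stabilises; likewise for $A'$.

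Assume henceforth that $e_n\to\infty$. By SGA 7 IX, for any semistable abelian variety $B/\K$ the geometric component group of the N\'eron model is $\Phi_B=\coker(u_B\colon M_{B^t}\to M_B^\vee)$, where $M_B, M_{B^t}$ are free $\Z$-modules of rank equal to the toric rank $t_B$ of the N\'eron special fibre, equipped with continuous $\Gal(\bar\K/\K)$-actions that factor through a finite quotient (since the toric part splits over a finite unramified extension of the residue field). The pairing $u_B$ is $\Gal$-equivariant and injective, with $|\coker u_B|=|\det u_B|$. The key structural input is that under a semistable extension $\F/\K$ of ramification index $e$, the lattices $M_B,M_{B^t}$ are unchanged while $u_B$ is replaced by $e\cdot u_B$; hence $|\Phi_{B,\F}|=e^{t_B}|\det u_B|$.

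Pick $n_0$ so large that the Frobenius of $k_{\K_n}$ acts trivially on each of $M_A, M_{A^t}, M_{A'}, M_{A'^t}$ for all $n\ge n_0$. Then for such $n$ no Frobenius-invariants need to be taken, and
$$
  c(A/\K_n) = |\Phi_{A,\K_n}| = e_n^{t_A}|\det u_A|, \qquad
  c(A'/\K_n) = e_n^{t_{A'}}|\det u_{A'}|.
$$
An isogeny $\phi\colon A\to A'$ induces an isogeny on the identity components of the N\'eron special fibres, hence on their toric parts, so the toric ranks agree: $t_A=t_{A'}$. The factors $e_n^{t_A}$ cancel, giving
$$
  \frac{c(A'\!/\K_n)}{c(A/\K_n)} = \frac{|\det u_{A'}|}{|\det u_A|}
$$
for every $n\ge n_0$, which is the asserted stabilisation. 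The main obstacle is to invoke the scaling relation $u_{B,\F}=e\cdot u_{B,\K}$ cleanly from SGA 7 and to verify that $\Phi_A(k_{\K_n})$ really equals the full geometric component group once Frobenius acts trivially on the defining lattices; both are essentially formal once one sets up the monodromy pairing carefully, and everything else in the argument is bookkeeping.
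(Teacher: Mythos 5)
Your overall strategy (monodromy pairing, scaling $u_B\mapsto e\cdot u_B$ under base change, counting Frobenius-fixed points of the component group) is exactly the right framework, and your treatment of the bounded-ramification case is fine. But there is a genuine gap in the unbounded case: the step ``pick $n_0$ so large that the Frobenius of $k_{\K_n}$ acts trivially on $M_A, M_{A^t}, M_{A'}, M_{A'^t}$'' need not be achievable. The Galois action on these lattices factors through a finite quotient $G$ of $\Gal(\bar k_\K/k_\K)$, say of order $d$, and Frobenius of $k_{\K_n}$ acts trivially precisely when $d$ divides the residue degree $f_{\K_n/\K}$. In a totally ramified tower $f_{\K_n/\K}=1$ for all $n$, and more generally even when $f_{\K_n/\K}\to\infty$ one cannot guarantee that $d\mid f_{\K_n/\K}$ eventually (consider $d=2$, $f_{\K_n/\K}=3^n$). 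In those situations $c(A/\K_n)$ is not the full geometric component group but $\bigl|\coker(e_{\K_n/\K}\cdot u_A)^{\Frob}\bigr|$ for a fixed nontrivial Frobenius action, and showing that this has the form $C\cdot e_{\K_n/\K}^r$ (with $r$ the rank of the split part, not the full toric rank) is a nontrivial statement about finite quotients of $\Z[G]$-lattices as a map gets scaled. That lattice-theoretic input is precisely what the proof in the paper supplies: it cites the companion result of Betts--Dokchitser, whose content is exactly the asymptotic $\bigl|\coker(e\cdot u)^{\Frob}\bigr| = C\cdot e^{\,r}$ for large $e$, giving $c(A/\K_n)=C\,e_{\K_n/\K}^{\,r}$ and $c(A'/\K_n)=C'\,e_{\K_n/\K}^{\,r}$ with the same exponent $r$ because $A, A'$ are isogenous. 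So your argument proves the theorem only in the special case where the toric parts are split (or become split in the tower), and the missing ingredient is exactly the lattice result that handles a nontrivial residual Galois action.
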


\begin{proof}
By a result of \cite{BD}, for sufficiently large $n$
$$
  c(A/\K_n) = C\cdot e_{\K_n/\K}^r 
     \qquad\text{and}\qquad
  c(A'\!/\K_n) = C'\cdot e_{\K_n/\K}^r,
$$
for some constants $C, C'$. Here $e$ is the ramification degree, and $r$ denotes 
the rank of the split toric part of $A/\K_n$ (and of $A'/\K_n$) for large enough $n$. 
The claim follows. 
\end{proof}

\section{Torsion}
\label{stors}

To deduce the exact growth of
$\sha$ from the isogeny invariance of the Birch--Swinnerton-Dyer conjecture,
we need to control torsion in the Mordell--Weil group in towers of number fields.
This is the purpose of this section.

\begin{proposition}
\label{torsstab}
Let $K\subset K_1\subset K_2\subset\ldots$ be a tower
of number fields, and $\phi: A\to A'$ an isogeny 
of abelian varieties over~$K$ of degree $p^km$, $p\nmid m$. 
Then for every $n\ge 1$,
$$
  \frac{|A(K_n)[p^\infty]|}{|A'(K_n)[p^\infty]|} = p^{a_n}\qquad\text{with}\qquad
    k(1-2\dim A)\le a_n\le k.
$$
If $A$ has finite $p$-power torsion over $\bigcup_n K_n$, then the quotient
stabilises.
\end{proposition}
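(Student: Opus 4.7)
The plan is to use $\phi$ itself for the upper bound on $a_n$, a complementary isogeny $\psi\colon A'\to A$ satisfying $\psi\circ\phi = [\deg\phi]_A$ for the lower bound, and to deduce stabilisation by propagating finiteness of torsion from $A$ to $A'$ via $\psi$. The first observation is that by the Mordell--Weil theorem, $A(K_n)[p^\infty]$ and $A'(K_n)[p^\infty]$ are finite abelian $p$-groups, so their quotient is automatically $p^{a_n}$ for some $a_n\in\Z$; the content of the proposition is therefore the two-sided bound and eventual constancy.

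For the upper bound, I restrict $\phi$ to $A(K_n)[p^\infty]$. Its kernel is contained in $A[\phi][p^\infty]$, whose order equals the $p$-part of $\deg\phi$, namely $p^k$, while the image lands in $A'(K_n)[p^\infty]$. This gives $|A(K_n)[p^\infty]|\le p^k\,|A'(K_n)[p^\infty]|$, i.e.\ $a_n\le k$. For the lower bound, take $\psi$ to be the natural quotient $A'=A/A[\phi]\twoheadrightarrow A/A[p^k m]\iso A$ (using that a finite commutative group scheme is annihilated by its order, so $A[\phi]\subseteq A[\deg\phi]$). Then $\psi\circ\phi=[p^k m]_A$ and $\deg\psi=(p^k m)^{2\dim A-1}$, so $A'[\psi]=A[p^k m]/A[\phi]$ has $p$-part of order $p^{k(2\dim A-1)}$. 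The same counting applied to $\psi$ yields $|A'(K_n)[p^\infty]|\le p^{k(2\dim A-1)}\,|A(K_n)[p^\infty]|$, i.e.\ $a_n\ge k(1-2\dim A)$.

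For stabilisation, suppose $T:=A(\bigcup_n K_n)[p^\infty]$ is finite. Then $A(K_n)[p^\infty]\subseteq T$ is an ascending chain of finite subgroups, hence eventually constant. Applying $\psi$ to $A'(\bigcup_n K_n)[p^\infty]$, the image lies in $T$ and the kernel has order at most $p^{k(2\dim A-1)}$, so $A'(\bigcup_n K_n)[p^\infty]$ is finite as well and its layers stabilise; the ratio therefore stabilises too. The only non-elementary input is the existence and degree computation of $\psi$, which is standard, so I anticipate no serious obstacle.
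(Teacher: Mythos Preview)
Your proof is correct and follows essentially the same approach as the paper: the upper bound via $\phi$ and the lower bound via the conjugate isogeny $\psi$ (which the paper calls $\phi'$) with $\psi\circ\phi=[\deg\phi]$. Your argument is in fact more carefully written than the paper's---you give the explicit construction of $\psi$ and spell out why finiteness of $p$-power torsion passes from $A$ to $A'$, whereas the paper simply asserts that ``both $A(K_n)[p^\infty]$ and $A'(K_n)[p^\infty]$ stabilise''.
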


\begin{proof}
Because $\phi: A(K_n)[p^\infty] \to A'(K_n)[p^\infty]$ has kernel of size at 
most $p^k$, the left-hand quotient is at most $p^k$. The same argument applied to
the conjugate isogeny $\phi': A'\to A$ (with $\phi\circ\phi'$ 
the multiplication-by-$p^k$ map) gives the first claim.
The second claim is clear, as both $A(K_n)[p^\infty]$ and $A'(K_n)[p^\infty]$
stabilise.
\end{proof}

\begin{corollary}
\label{cyctorsstab}
Let $K$ be a number field, $K_\infty=\bigcup_n K_n$ its cyclotomic\linebreak
$\Z_l$-extension,
and $A,A'\!/K$ isogenous abelian varieties. Then for every prime $p$ 
the sequence
$\frac{|A(K_n)[p^\infty]|}{|A'(K_n)[p^\infty]|}$ stabilises as $n\to\infty$.
\end{corollary}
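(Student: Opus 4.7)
The plan is to apply the last assertion of Proposition~\ref{torsstab}: the ratio $|A(K_n)[p^\infty]|/|A'(K_n)[p^\infty]|$ stabilises provided $A(K_\infty)[p^\infty]$ is finite, where $K_\infty=\bigcup_n K_n$. So the whole task reduces to showing that the $p$-primary torsion of $A$ over the cyclotomic $\Z_l$-extension of $K$ is finite, and I would split into the cases $l\ne p$ and $l=p$.

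For $l\ne p$, the image of the Galois representation on $T_pA$ is a closed subgroup of $\GL_{2\dim A}(\Zp)$, hence a compact $p$-adic Lie group, while $\Gal(K_\infty/K)\iso\Z_l$ is pro-$l$. Therefore $L=K_\infty\cap K(A[p^\infty])$ has Galois group over $K$ that is at once a continuous quotient of a pro-$l$ group and of a $p$-adic Lie group. Any closed pro-$l$ subgroup of a $p$-adic Lie group is necessarily finite (an open normal pro-$p$ subgroup of the latter intersects the pro-$l$ part trivially), so $L/K$ is finite and $A(K_\infty)[p^\infty]=A(L)[p^\infty]$ is finite.

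For $l=p$, one has $K_\infty\subseteq K(\bbmu_{p^\infty})\subseteq K^{\mathrm{cyc}}=K(\bbmu_\infty)$, and I would invoke Ribet's theorem asserting that $A(K^{\mathrm{cyc}})_{\mathrm{tors}}$ is finite for every abelian variety over a number field. This immediately yields $|A(K_\infty)[p^\infty]|<\infty$. In the elliptic-curve case one could alternatively appeal to Imai's theorem, proved via the Hodge--Tate structure of $V_pE$ at primes of $K$ above $p$.

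The substantive step is the case $l=p$, which rests on the non-trivial theorem of Ribet (or Imai for elliptic curves); the case $l\ne p$ is essentially formal once Proposition~\ref{torsstab} is in hand, using only the fact that a $p$-adic Lie group has no infinite pro-$l$ subgroups.
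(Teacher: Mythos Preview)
Your proof is correct and follows the same strategy as the paper: reduce via Proposition~\ref{torsstab} to the finiteness of $A(K_\infty)[p^\infty]$, which the paper handles in a single line by citing the Imai--Serre theorem (via \cite{Rib}) to conclude that $A$ has finite torsion over $K_\infty$ for every $l$ and $p$ at once. Your case split is a mild elaboration: you isolate the case $l\ne p$ and dispose of it by the elementary observation that a pro-$l$ quotient of a compact $p$-adic Lie group must be finite, reserving the deep input (Ribet/Imai) for $l=p$. This makes transparent that only the $l=p$ case requires the non-trivial theorem, but since you still invoke it there, the two arguments ultimately rest on the same result.
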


\begin{proof}
By the Imai-Serre theorem (see \cite{Rib}), 
$A$ has finite torsion over $K_\infty$. 
\end{proof}

\pagebreak

\begin{remark}
\label{elltorscyc}
If $A\!=E$ is an elliptic curve over $K\!=\!\Q$, and $K_n/\Q$~are \hbox{Galois},~the 
assumption 
that $E$ has finite $p$-power torsion over $K_\infty=\bigcup_n K_n$ simply means
that $K_\infty$ does not contain the full $p$-division tower $\Q(E[p^\infty])$.
Indeed, if $E/K_\infty$ has infinite $p$-power torsion and $K_\infty/\Q$ is 
Galois but does not contain $\Q(E[p^\infty])$, 
then $E[p^n]$ has a Galois stable cyclic subgroup of order $p^n$
and so a cyclic $p^n$-isogeny for every $n\ge 1$. 
Since $E$ cannot have CM over $\Q$, this is impossible 
by Shafarevich's theorem on the finiteness of isogeny classes.
\end{remark}

\section{Divisible Selmer}
\label{sdiv}

The ultimate global invariant that we need to control is the divisible 
part\linebreak of the $p^\infty$-Selmer group. 
Its $\Z_p$-corank $\rk_p A/K$ is conjecturally the Mordell--Weil rank,
which is hard to bound in general towers of number fields $K_n/K$.
As a result, we only give elementary bounds on 
$|\Sel^{\div}_{p^\infty}(A/K_n)[\phi]|$ in terms of $\rk_p A/K_n$
and prove some stabilisation results that we will need in \S\ref{sgrowth}.
%


\begin{lemma}
\label{rkplem}
Let $\phi: A\to A'$ be an isogeny of abelian varieties over a number field $K$,
and $p$ a prime number. Then 
$$
  |\Sel^{\div}_{p^\infty}(A/K)[\phi]| \le p^{\rk_p A/K\>\cdot\>\ord_p\deg\phi}.
$$
\end{lemma}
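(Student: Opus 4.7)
The plan is to reduce the bound to a simple counting statement on a divisible group of known corank, using the existence of a conjugate isogeny.

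First, I would unwind the definitions. By construction (see the footnote defining $\rk_p$), the divisible group $\Sel^{\div}_{p^\infty}(A/K)$ has $\Z_p$-corank equal to $r:=\rk_p A/K$, so it is isomorphic to $(\Q_p/\Z_p)^r$ as an abstract abelian group. In particular, for every $m\ge 0$,
$$
  |\Sel^{\div}_{p^\infty}(A/K)[p^m]| = p^{rm}.
$$

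Next, I would introduce the conjugate isogeny $\psi:A'\to A$ (the same trick used in the proof of Proposition \ref{torsstab}), which satisfies $\psi\circ\phi=[d]_A$ where $d=\deg\phi$. Applying $\psi$ to any element of $\Sel^{\div}_{p^\infty}(A/K)[\phi]$ shows that such an element is killed by $[d]$, hence by $[p^k]$ for $k=\ord_p\deg\phi$ (since the prime-to-$p$ part of $d$ acts invertibly on the $p$-divisible group $\Sel^{\div}_{p^\infty}(A/K)$). Therefore
$$
  \Sel^{\div}_{p^\infty}(A/K)[\phi] \>\subseteq\> \Sel^{\div}_{p^\infty}(A/K)[p^k],
$$
and the right-hand side has size $p^{rk}$ by the first step. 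This yields the desired bound.

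There is essentially no obstacle here: the only thing one needs to be careful about is that the functorial map $\phi_*$ on Selmer groups restricts to the divisible parts (which follows because divisible subgroups map to divisible subgroups and the divisible part is the maximal divisible subgroup), and that the conjugate isogeny $\psi$ indeed exists for any isogeny of abelian varieties. Both are standard.
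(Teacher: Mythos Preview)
Your proof is correct and essentially identical to the paper's: both use the conjugate isogeny to embed $\Sel^{\div}_{p^\infty}(A/K)[\phi]$ into the $d$-torsion (equivalently $p^k$-torsion) of $(\Q_p/\Z_p)^{\rk_p A/K}$ and count. Your aside about $\phi_*$ restricting to divisible parts is not actually needed for the inclusion, since $\psi\circ\phi=[d]$ already gives $[\phi]\subset[d]$ on any subgroup.
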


\begin{proof}
Let $d=\deg\phi$, and let $\phi'$ be the conjugate isogeny, so that 
$\phi'\circ\phi$ is multiplication by $d$. Then it is clear that
$$
  \Sel^{\div}_{p^\infty}(A/K)[\phi] \subset \Sel^{\div}_{p^\infty}(A/K)[d].
$$ 
The right-hand side has size $p^{\rk_p A/K\cdot\>\ord_p d}$, since
$\Sel^{\div}_{p^\infty}(A/K)$ is isomorphic to $(\Q_p/\Z_p)^{\rk_p A/K}$.
\end{proof}

\begin{corollary}
\label{rkpcor}
Let $K\!\subset\!K_1\!\subset\!K_2\!\subset\!\ldots$ be a tower
of number fields, $\phi:\! A\!\to\!A'$ an isogeny of abelian varieties over $K$,
and $p$ a prime number. Then
$$
\ord_p\frac{
|\Sel^{\div}_{p^\infty}(A/K)[\phi]|
}{
|\Sel^{\div}_{p^\infty}({A'}^t/K)[\phi^t]|
} 
= O (\rk_p A/K_n).
$$
\end{corollary}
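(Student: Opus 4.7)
The plan is to apply Lemma \ref{rkplem} once to $\phi$ and once to the dual $\phi^t$, each over $K_n$, and combine the resulting bounds. (I read the corollary as having $K_n$ in the Selmer groups on the left, since with $K$ the left-hand side would be independent of $n$.) Set $c=\ord_p\deg\phi$, which equals $\ord_p\deg\phi^t$.

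Applying Lemma \ref{rkplem} to $\phi/K_n$ gives $\ord_p |\Sel^{\div}_{p^\infty}(A/K_n)[\phi]|\le c\cdot \rk_p A/K_n$. Since the denominator in the corollary is a positive integer, its $\ord_p$ is at least $0$, and hence $\ord_p$ of the ratio is at most $c\cdot \rk_p A/K_n$. Applying the same lemma to $\phi^t\colon A'^t\to A^t$ over $K_n$ symmetrically produces $\ord_p |\Sel^{\div}_{p^\infty}(A'^t/K_n)[\phi^t]|\le c\cdot \rk_p A'^t/K_n$, and so $\ord_p$ of the ratio is bounded below by $-c\cdot \rk_p A'^t/K_n$.

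To pass from $\rk_p A'^t/K_n$ to $\rk_p A/K_n$ I would invoke the isogeny invariance of the $\Z_p$-corank: for any isogeny $\psi\colon B\to B'$ the induced map on $p^\infty$-Selmer groups has kernel and cokernel killed by $\deg\psi$, so the divisible parts share the same $\Z_p$-corank. Since $A$ is isogenous to $A'$ via $\phi$ and $A'$ to $A'^t$ via any polarization, one gets $\rk_p A/K_n=\rk_p A'^t/K_n$, and combining the two bounds yields the claim with implicit constant $c=\ord_p\deg\phi$. The main (essentially only) point to verify is this isogeny invariance of $\rk_p$, a standard snake-lemma argument on the defining sequences of $\Sel_{p^\infty}$; I do not anticipate any real obstacle.
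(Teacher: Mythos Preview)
Your proposal is correct and follows the same approach as the paper, whose proof is the one-liner ``Apply Lemma \ref{rkplem} to $\phi$ and $\phi^t$.'' You have simply fleshed out the details (including the typo correction $K\to K_n$ and the isogeny-invariance of $\rk_p$ needed to pass from $\rk_p A'^t/K_n$ to $\rk_p A/K_n$), which the paper leaves implicit.
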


\begin{proof}
Apply Lemma \ref{rkplem} to $\phi$ and $\phi^t$. 
\end{proof}

\begin{lemma}
\label{vovscontrol}
Let $F/K$ be a Galois extension of number fields, and $A/K$ an abelian variety.
Then 
$$
  |\ker(\Res:\Sel_{p^\infty}^{\div}(A/K)\to \Sel_{p^\infty}^{\div}(A/F))| 
     \le |A(F)[p^\infty]|^{\rk_p A/K}.
$$
\end{lemma}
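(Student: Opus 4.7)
The plan is to reduce the problem to a Galois cohomology calculation via inflation-restriction, and then exploit the tension between the kernel being forced to sit in a subgroup of bounded exponent and being forced to sit in the divisible group $\Sel^{\div}_{p^\infty}(A/K)\iso(\Q_p/\Z_p)^r$, where $r=\rk_p A/K$.

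First, I would apply the five-term inflation-restriction sequence to the $\Gal(\bar K/K)$-module $A[p^\infty]$ relative to the normal subgroup $\Gal(\bar K/F)$ with quotient $G:=\Gal(F/K)$. This identifies the kernel of
\begin{equation*}
\Res \,:\, H^1(\Gal(\bar K/K), A[p^\infty]) \lar H^1(\Gal(\bar K/F), A[p^\infty])
\end{equation*}
with $H^1(G, A(F)[p^\infty])$. Since restriction preserves local Selmer conditions, and since homomorphisms of abelian groups send divisible subgroups into divisible subgroups, restriction descends to a well-defined map $\Sel^{\div}_{p^\infty}(A/K)\to\Sel^{\div}_{p^\infty}(A/F)$, whose kernel is then contained in $H^1(G, A(F)[p^\infty])$.

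The second step is the elementary observation that $H^1(G, A(F)[p^\infty])$ is annihilated by the exponent $p^e$ of the finite coefficient module $A(F)[p^\infty]$: multiplication by $p^e$ is the zero endomorphism on the module and therefore induces the zero map on cohomology. Combined with $\Sel^{\div}_{p^\infty}(A/K)\iso(\Q_p/\Z_p)^r$, this forces the kernel into $\Sel^{\div}_{p^\infty}(A/K)[p^e]\iso(\Z/p^e\Z)^r$, so
\begin{equation*}
|\ker| \;\le\; p^{er} \;\le\; |A(F)[p^\infty]|^r,
\end{equation*}
where the last inequality uses that the exponent of a finite abelian $p$-group is bounded by its order.

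I do not expect any serious obstacle: the argument is essentially a one-step combination of standard inflation-restriction with the elementary fact that a subgroup of $(\Q_p/\Z_p)^r$ killed by $p^e$ has order at most $p^{er}$. In fact the method produces the slightly sharper bound $|\ker|\le \exp(A(F)[p^\infty])^r$, but the stated form is enough for the applications in \S\ref{sgrowth}.
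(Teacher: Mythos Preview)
Your proof is correct and follows essentially the same route as the paper: inflation--restriction identifies the kernel inside $H^1(G,A(F)[p^\infty])$, this group is killed by (the exponent of, hence by) $|A(F)[p^\infty]|$, and a subgroup of $(\Q_p/\Z_p)^r$ of bounded exponent has the asserted size. The only cosmetic difference is that the paper passes through the finite-level groups $\Sel_{p^n}$ and takes a limit, whereas you work directly with $A[p^\infty]$; both are fine.
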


\begin{proof}
First observe that for every $n\ge 1$, 
$$
  \ker(\Res: \Sel_{p^n}(A/K)\to \Sel_{p^n}(A/F))
$$
is killed by $M=|A(F)[p^\infty]|$. 
Indeed, $\Sel_{p^n}(A/K)\subset H^1(K,A[p^n])$, and 
the kernel of $\Res: H^1(K,A[p^n])\to H^1(F,A[p^n])$ is
$H^1(\Gal(F/K),A(F)[p^n])$ by the inflation-restriction sequence,
and so is clearly killed by $M$.
Because $\Sel_{p^\infty}$ is the injective limit of the $\Sel_{p^n}$, 
the asserted kernel on $\Sel_{p^\infty}$ and on $\Sel_{p^\infty}^{\div}$ 
is also killed by $M$.
As $\Sel_{p^\infty}^{\div}(A/K)\iso (\Qp/\Zp)^{\rk_p A/K}$, 
the result follows.
%
%
\end{proof}

\begin{theorem}
\label{divselstab}
Let $K$ be a number field, $A/K$ an abelian variety, $p$ a prime number and 
$K\subset K_1\subset K_2\subset\ldots$ a tower of Galois extensions of~$K$. 
Suppose that both $A(K_n)[p^\infty]$ and 
$\rk_p A/K_n$ are bounded as $n\to\infty$.
\begin{enumerate}
\item 
There exists $n_0$ such that the restriction maps
$$
  J_{n,n'}: \Sel^{\div}_{p^\infty}(A/K_n) \lar \Sel^{\div}_{p^\infty}(A/K_{n'})
$$
are isomorphisms for all $n_0\le n\le n'$. 
\item
If $\phi: A\to A'$ is an isogeny, 
then $|\Sel^{\div}_{p^\infty}(A/K_n)[\phi]|$
is eventually constant, equal to $p^\lambda$ for some 
$0\le \lambda\le \ord_p\deg\phi\cdot\lim_{n\to\infty}\rk_p A/K_n$.
\end{enumerate}
\end{theorem}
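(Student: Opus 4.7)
The plan is to bound $|\ker J_{n,n'}|$ uniformly via Lemma \ref{vovscontrol} and then use Pontryagin duality to turn the problem into one about composable injections of free $\Z_p$-modules, where cokernel sizes multiply along a composition. For (1), since $K_{n'}/K_n$ is Galois, Lemma \ref{vovscontrol} together with the hypotheses $|A(K_n)[p^\infty]|\le M$ and $r_n := \rk_p A/K_n\le R$ yields
$$
  |\ker J_{n,n'}|\le M^R =: C \qquad \text{for all } n\le n'.
$$
Because this kernel is finite while the source is divisible, the image of $J_{n,n'}$ is divisible of $\Z_p$-corank $r_n$, forcing $r_n\le r_{n'}$; being bounded and non-decreasing, the sequence $r_n$ stabilises at some $r$ for $n\ge N_0$. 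For such $n$ the map $J_{n,n+1}:(\Q_p/\Z_p)^r\to(\Q_p/\Z_p)^r$ is surjective with finite kernel.

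I now apply Pontryagin duality: each $\hat J_{n,n+1}:\Z_p^r\to\Z_p^r$ is an injection of free $\Z_p$-modules with finite cokernel of the same size as $\ker J_{n,n+1}$. Since $\hat J_{n,n'}$ is the composition $\hat J_{n,n+1}\circ\cdots\circ\hat J_{n'-1,n'}$ of such injections, a standard snake-lemma argument (equivalently, multiplicativity of determinants) gives
$$
  |\ker J_{n,n'}|=\prod_{i=n}^{n'-1}|\ker J_{i,i+1}|.
$$
The uniform bound by $C$ forces all but finitely many factors to equal $1$, so for some $n_0\ge N_0$ and every $i\ge n_0$ the map $J_{i,i+1}$ is both injective and surjective, hence an isomorphism; composing proves (1).

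For (2), the first task is to transfer the hypotheses to $A'$: Proposition \ref{torsstab} bounds the ratio $|A(K_n)[p^\infty]|/|A'(K_n)[p^\infty]|$, and the exact sequence induced by $\phi$ together with the finiteness of $A[\phi]$ gives $\Sel_{p^\infty}(A/K_n)$ and $\Sel_{p^\infty}(A'/K_n)$ the same $\Z_p$-corank. Applying part (1) to both $A$ and $A'$ with a common $n_0$, the isomorphisms $J_{n,n'}$ commute with the map of divisible Selmer groups induced by $\phi$, so passing to $\phi$-torsion yields $\Sel^{\div}_{p^\infty}(A/K_n)[\phi]\cong \Sel^{\div}_{p^\infty}(A/K_{n'})[\phi]$ for $n_0\le n\le n'$. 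This order is therefore eventually a constant $p^\lambda$, and the bound $\lambda\le \ord_p\deg\phi\cdot \lim_n r_n$ is immediate from Lemma \ref{rkplem}.

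The main obstacle is the multiplicativity identity for kernel sizes along the tower; once expressed via the cokernel of a composition of $\Z_p$-module injections, the uniform bound from Lemma \ref{vovscontrol} collapses the product and the rest is formal corank and functoriality bookkeeping.
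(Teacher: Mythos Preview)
Your proof is correct. Both arguments for (1) rest on the same kernel bound from Lemma~\ref{vovscontrol}, but the paper organises it differently: it fixes the base level and observes that $|\ker J_{0,n}|$ is an \emph{increasing} bounded sequence, hence stabilises at some $n_0$; then for $n_0\le n\le n'$ the factorisation $J_{0,n'}=J_{n,n'}\circ J_{0,n}$ with $J_{0,n}$ surjective and $|\ker J_{0,n}|=|\ker J_{0,n'}|$ forces $J_{n,n'}$ injective directly, with no dualisation. Your route via Pontryagin duality and multiplicativity of cokernel sizes along a composition of $\Z_p^r$-injections is a pleasant structural reformulation of the same telescoping, but the paper's monotone-sequence argument is shorter and stays entirely on the Selmer side. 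For (2) the arguments agree in substance; your explicit transfer of the boundedness hypotheses to $A'$ (via Proposition~\ref{torsstab} and equality of coranks under isogeny) spells out what the paper's phrase ``commute with $\phi$'' needs, namely that $J^{A'}_{n_0,n}$ is at least injective so that $J^A_{n_0,n}$ restricts to a bijection on $\phi$-torsion.
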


\begin{proof}
(1) Because $\rk_p A/K_n\le\rk_p A/K_{n'}$ for $n\le n'$, the 
$p^\infty$-Selmer rank eventually 
stabilises. Replacing $K$ by some $K_m$, we may thus assume 
that $\rk_p A/K_n$ is independent of $n$. 
The maps $J_{n,n'}$ have finite kernels by Lemma~\ref{vovscontrol}, so they 
must then be surjective.

By assumption, $A$ has finite $p$-power torsion over $\bigcup K_n$,
say of order~$M$. 
The (increasing) sequence $|\ker J_{0,n}|$ is bounded
by Lemma \ref{vovscontrol}, so it stabilises, say at $n_0$. 
Now suppose $n_0\le n\le n'$. 
The map $J_{n,n'}$ cannot have non-trivial kernel because 
$J_{0,n'}=J_{n,n'}\circ J_{0,n}$, the kernels 
of $J_{0,n}$ and of $J_{0,n'}$ are of the same size, and 
$J_{0,n}$ is surjective.
Thus the $J_{n,n'}$ are both surjective and injective, as required.

(2) Take $n_0$ as above, so that $\rk_p A/K_n$ is constant and $J_{n_0,n}$ 
are isomorphisms for $n\ge n_0$.
Then, by Lemma \ref{rkplem},
$|\Sel^{\div}_{p^\infty}(A/K_{n_0})[\phi]|=p^\lambda$ for some 
$0\le\lambda\le \ord_p\deg\phi\cdot\rk_p A/K_{n_0}$.
Now the maps $J_{n_0,n}$ are isomorphisms and commute with $\phi$,
and the result follows. 
\end{proof}

\begin{corollary}
Let $K_\infty\!=\!\cup_n K_n$ be the $l$-cyclotomic tower of a 
number field~$K$, and let $\phi: A\to A'$ be an isogeny of abelian varieties 
over $K$. If $\rk_p A/K_n$ is bounded as $n\to\infty$, then
$|\Sel^{\div}_{p^\infty}(A/K_n)[\phi]|$ is eventually constant.
\end{corollary}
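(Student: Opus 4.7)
The plan is to derive this corollary as a direct application of Theorem \ref{divselstab}(2) to the cyclotomic tower, so the work reduces to verifying the three hypotheses of that theorem: that each $K_n/K$ is Galois, that $A(K_n)[p^\infty]$ is bounded in $n$, and that $\rk_p A/K_n$ is bounded in $n$.

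First, the cyclotomic $\Z_l$-extension $K_\infty/K$ is abelian (in particular Galois) by construction, and hence so is each intermediate layer $K_n/K$; this puts us in the setting of Theorem \ref{divselstab}. The boundedness of $\rk_p A/K_n$ is precisely the hypothesis we are given.

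The remaining point is the boundedness of the $p$-primary torsion $A(K_n)[p^\infty]$. For this I would invoke the Imai--Serre theorem exactly as in the proof of Corollary \ref{cyctorsstab}: $A$ has only finitely many torsion points over $K_\infty$, so the subgroups $A(K_n)[p^\infty] \subset A(K_\infty)_{\tors}$ are uniformly bounded (in fact they stabilise). This is the one external input; it is the hard content, but it is already cited and used earlier in the paper.

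With all three hypotheses in hand, Theorem \ref{divselstab}(2) applies verbatim to $\phi:A\to A'$ over the tower $K\subset K_1\subset K_2\subset\ldots$, yielding that $|\Sel^{\div}_{p^\infty}(A/K_n)[\phi]|$ is eventually constant (equal to $p^\lambda$ for some $\lambda$ bounded by $\ord_p(\deg\phi)\cdot\lim_n \rk_p A/K_n$). This gives the corollary, with no further calculation required.
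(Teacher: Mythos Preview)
Your proof is correct and follows exactly the same approach as the paper: apply Theorem \ref{divselstab}(2), using the Imai--Serre theorem to bound $A(K_n)[p^\infty]$ in the cyclotomic tower and the given hypothesis for $\rk_p A/K_n$. The paper's proof is simply the one-line observation about Imai--Serre, with the application of Theorem \ref{divselstab}(2) left implicit.
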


\begin{proof}
For cyclotomic towers, torsion in $A(K_n)$ is bounded by the 
Imai-Serre theorem (see \cite{Rib}).
\end{proof}

\begin{corollary}
\label{corselellcyc}
Let $\Q_\infty\!=\!\cup_n \Q(l^n)$ be the $l$-cyclotomic tower of $\Q$, 
and let $\phi:\!E\!\to\!E'$ be an isogeny of elliptic curves over $\Q$.
Then $|\Sel^{\div}_{p^\infty}(E/\Q(l^n))[\phi]|$ is eventually constant.
\end{corollary}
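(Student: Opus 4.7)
The plan is to apply the preceding corollary to $K=\Q$ and the tower $K_n=\Q(l^n)$. That corollary (equivalently Theorem~\ref{divselstab}(2)) gives the desired stabilisation under two hypotheses on the tower: that $E(K_n)[p^\infty]$ is bounded, and that $\rk_p E/K_n$ is bounded. The first is automatic from the Imai--Serre theorem, as already invoked in Corollary~\ref{cyctorsstab}. The substance of the present statement is therefore to establish the second hypothesis for elliptic curves $E/\Q$ in the $l$-cyclotomic $\Z_l$-extension.

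By Remark~\ref{remarks}(1), the claim is trivial when $p\nmid\deg\phi$, so one reduces at the outset to the case where $\deg\phi$ is a power of $p$. To bound $\rk_p E/\Q(l^n)$, split into two cases. If $l=p$, appeal to the known Iwasawa-theoretic cotorsion results: Kato's theorem handles good ordinary reduction, results of Kobayashi and Pollack--Rubin cover supersingular reduction (via the plus/minus Selmer groups), and analogues due to Schneider and Greenberg handle the multiplicative and additive cases. In each such case the (possibly signed) dual $p^\infty$-Selmer group over $\Q_\infty$ is torsion over the Iwasawa algebra of $\Gal(\Q_\infty/\Q)$, and a Mazur-style control theorem then bounds $\rk_p E/\Q(p^n)$ by the corresponding $\lambda$-invariant for all sufficiently large $n$. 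If $l\ne p$, the Galois group $\Gamma_n=\Gal(\Q(l^n)/\Q)\cong\Z/l^n\Z$ has order coprime to $p$; inflation--restriction applied to the $p$-primary module $E[p^\infty]$ therefore yields an isomorphism $\Sel_{p^\infty}(E/\Q)\stackrel{\sim}{\longrightarrow}\Sel_{p^\infty}(E/\Q(l^n))^{\Gamma_n}$, and combining this with Maschke's decomposition of the Selmer group into $\Gamma_n$-isotypic components (using that $\phi$ is $\Q$-defined and hence commutes with $\Gamma_n$) should allow one to bound $\rk_p E/\Q(l^n)$ uniformly in $n$ by handling each character's contribution separately.

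The hard part is the $l\ne p$ case: the Iwasawa-algebra machinery is unavailable, and one needs a hands-on analysis---using the Maschke splitting and the Galois-equivariance of $\phi$---to control the non-trivial isotypic components of the Selmer group, for which the standard Iwasawa-theoretic cotorsion theorems are silent.
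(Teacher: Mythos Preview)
Your plan is exactly the paper's: reduce to bounding $\rk_p E/\Q(l^n)$ and apply the preceding corollary. But the paper dispatches this in one line---``By Kato's theorem, the $p^\infty$-Selmer rank of $E$ is bounded in the cyclotomic tower''---and this holds for every $l$ and every reduction type. The relevant consequence of Kato's Euler system is that if $L(E,\chi,1)\ne 0$ for a Dirichlet character $\chi$, then the $\chi$-isotypic part of $\Sel_{p^\infty}$ is finite; combined with Rohrlich's theorem that only finitely many $\chi$ of $l$-power conductor have $L(E,\chi,1)=0$, this bounds the rank uniformly in $n$. Your case split for $l=p$ (ordinary via Kato, supersingular via Kobayashi and Pollack--Rubin, etc.) is therefore unnecessary, and in any event Kato's result is not restricted to good ordinary reduction.

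Your $l\ne p$ argument, on the other hand, does not work. The Maschke decomposition and the inflation--restriction isomorphism you write down are both correct, but they only pin down the trivial isotypic component of $\Sel_{p^\infty}(E/\Q(l^n))$ as $\Sel_{p^\infty}(E/\Q)$. You give no mechanism to control the other $l^n-1$ components, and the $\Q$-rationality of $\phi$ merely says each component is $\phi$-stable, not that it is small. There is no elementary bypass: taking $\phi=[p]\colon E\to E$ gives $|\Sel^{\div}_{p^\infty}(E/\Q(l^n))[\phi]|=p^{\rk_p E/\Q(l^n)}$, so the statement of the corollary for this particular $\phi$ is literally equivalent to boundedness of $\rk_p$. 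You therefore cannot avoid invoking Kato--Rohrlich in the $l\ne p$ case as well.
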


\begin{proof}
By Kato's theorem \cite{KatP},
the $p^\infty$-Selmer rank of $E$ is bounded in the cyclotomic tower.
\end{proof}

\section{Selmer growth in towers}
\label{sgrowth}

In this section we prove Theorems \ref{imain}-\ref{iLie}. 
This relies on the invariance of 
the Birch--Swinnerton-Dyer conjecture under isogeny (as in \S\ref{sinv})
and the explicit computations of periods $\Omega_A, \Omega_{A'}$, 
exterior form contributions $\Om_\phi$,
Tamagawa numbers and torsion 
from \S\ref{sper}, \S\ref{som}, \S\ref{stam} and \S\ref{stors}.
In fact, Theorem~\ref{main} and Proposition \ref{gammamain} give an explicit
description of the Selmer quotient in almost completely 
general towers of number fields.
To obtain the statements for $\sha$, we also bound the contributions from
the divisible part of Selmer using the results of \S\ref{sdiv}.

\begin{notation}
\label{notgammav}
Let $F/K$ be a Galois extension of number fields, 
and let $\phi: A\to A'$ be an isogeny of abelian varieties over $K$.
For a place $v$ of $K$ and a place $w|v$ of $F$ write 
$$
\gamma_v=\left\{
\begin{array}{llll}
 \frac{\Omega^\ast_{A'\!/K_v}}{\Omega^\ast_{A/K_v}} & 
    \text{if }K_v\iso\R, F_w\iso\C\cr
 1 & \text{in all other Archimedean cases} \cr
 \frac{c_w(A'\!/F)}{c_w(A/F)}|k_w|^{\Om_\phi(F_w)} & \text{if } v\nmid\infty, \cr
\end{array}
\right.
$$
where $k_w$ is the residue field at $w$ 
and $\Om_\phi$ is as in Definition \ref{defom}. 
Note that $\gamma_v=1$ for primes of $v$ of good reduction, 
see Lemma \ref{semom0}.

If $K\subset K_1\subset K_2\subset ...$ is a tower of Galois extensions of $K$,
we write $e_{v,n}, f_{v,n}, n_{v,n}$ for the ramification degree of $v$,
the residue degree of $v$ and the number of places above $v$ in $K_n/K$.
In this setting, we also write $\gamma_{v,n}$ for the $\gamma_v$ for $F=K_n$.
\end{notation}

\begin{theorem}
\label{main}
Let $F/K$ be a Galois extension of number fields, 
and let $\phi: A\to A'$ be an isogeny of abelian varieties over $K$.
If the degree of $\phi$ is a power of $p$, then
\begingroup\smaller[1]
$$
\frac{
|\Sel^{\div}_{p^\infty}(A/F)[\phi]|
}{
|\Sel^{\div}_{p^\infty}(A'^t/F)[\phi^t]|
} 
  \frac{|\shazero_{A/F}[p^\infty]|}{|\shazero_{A'\!/F}[p^\infty]|} =
  \frac{|A(F)[p^\infty]||A^t(F)[p^\infty]|}{|A'(F)[p^\infty]||A'^t(F)[p^\infty]|}
%
\Bigl(\frac{\Omega_{A'\!/K}}{\Omega_{A/K}}\Bigr)^{[F:K]}
\prod_v \gamma_v^{n_v},
$$
\endgroup
where $n_v$ is the number of places of $F$ above $v$.
If $\phi$ has arbitrary degree, then
the left-hand side and the right-hand side have the same $p$-part.
\end{theorem}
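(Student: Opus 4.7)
The plan is to derive Theorem \ref{main} directly from Corollary \ref{selmain} by regrouping the local factors according to places $v$ of $K$ (rather than $w$ of $F$), exploiting the fact that the Galois hypothesis on $F/K$ makes the relevant local invariants constant along $w\mid v$. It then suffices to check, for each place $v$ of $K$, that the combined Archimedean, Tamagawa, and differential contributions at the places above $v$ collapse to $\gamma_v^{n_v}$.

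First I would invoke Corollary \ref{selmain}, giving the equality of the Selmer-Sha quotient with the torsion quotient times $(\Omega_{A'\!/K}/\Omega_{A/K})^{[F:K]}$ times a product of local factors indexed by places $v$ of $K$ and $w\mid v$. Since $F/K$ is Galois, the decomposition group acts transitively on the set of $w\mid v$, so all local fields $F_w$ are $K_v$-isomorphic; in particular the residue field sizes $|k_w|$, the local Tamagawa numbers $c_w(A/F)$, $c_w(A'\!/F)$, and the quantity $\Om_\phi(F_w)$ depend only on $v$. This is precisely what makes $\gamma_v$ in Notation \ref{notgammav} well-defined.

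Now I would verify the matching place by place. If $v$ is complex, both the Archimedean factor in Corollary \ref{selmain} and $\gamma_v$ are $1$. If $v$ is real, then either every $w\mid v$ is real (the Archimedean factor is $1$ and so is $\gamma_v$) or every $w\mid v$ is complex, so that $\#\{w\mid v\text{ complex}\}=n_v$ and the Archimedean factor is $(\Omega^\ast_{A'\!/K_v}/\Omega^\ast_{A/K_v})^{n_v}=\gamma_v^{n_v}$. If $v$ is non-Archimedean, the Tamagawa part above $v$ is $\prod_{w\mid v}c_w(A'\!/F)/c_w(A/F)=(c_w(A'\!/F)/c_w(A/F))^{n_v}$ by Galois invariance, and by Definition \ref{defom} the differential factor at each $w\mid v$ is $|k_w|^{\Om_\phi(F_w)}$, so altogether the contribution is $(c_w(A'\!/F)/c_w(A/F))^{n_v}\cdot |k_w|^{n_v\Om_\phi(F_w)}=\gamma_v^{n_v}$.

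Multiplying these local identities and combining with the period and torsion factors from Corollary \ref{selmain} yields the claimed formula when $\deg\phi$ is a power of $p$. The arbitrary-degree case follows by comparing $p$-parts throughout, exactly as in Corollary \ref{selmain}; the period quotients stay positive rationals by Lemma \ref{perrat}. The main obstacle is really only the careful bookkeeping in the Archimedean case, ensuring the three sub-cases (complex $v$; real $v$ staying real; real $v$ becoming complex) reproduce the piecewise definition of $\gamma_v$ in Notation \ref{notgammav}; everything else is an immediate consequence of Galois transitivity and Definition \ref{defom}.
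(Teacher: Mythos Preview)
Your proposal is correct and follows the same approach as the paper: the paper's proof is the single sentence ``This a rephrasification of Corollary \ref{selmain},'' and what you have written is precisely that rephrasing spelled out, using the Galois hypothesis to collapse the $w\mid v$ products into $\gamma_v^{n_v}$.
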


\begin{proof}
This a rephrasification of Corollary \ref{selmain}.
\end{proof}

\begin{theorem}
\label{cycmain}
Let $\Q_\infty\!=\!\cup_n \Q(l^n)$ be the $l$-cyclotomic tower,
$p$ a prime, and $\phi: E \to E'$ an isogeny of elliptic curves over $\Q$. 
Then for all large enough~$n$,
$$
\frac{
|\Sel^{\div}_{p^\infty}(E/\Q(l^n))[\phi]|
}{
|\Sel^{\div}_{p^\infty}(E'\!/\Q(l^n))[\phi^t]|
} 
  \frac{|\shazero_{E/\Q(l^n)}[p^\infty]|}{|\shazero_{E'\!/\Q(l^n)}[p^\infty]|} =
%
  p^{\,\mu l^n + \kappa + \epsilon(n)},
$$
with $\mu\in\frac{1}{12}\Z$ given by
$$
  \mu = \ord_p\frac{\Omega_{E'}}{\Omega_E} + 
  \left\{\begin{array}{llllll}
     0&{\text{if $\,l\ne p\>$ or $\>\ord_p(j_E)<0$,}}\\[3pt]
     {\frac1{12}{\ord_p(\frac{\Delta_{E'}}{\Delta_E})}}&{\text{if $\,l=p\>$ and $\>\ord_p(j_E)\ge 0$,}}\cr
  \end{array}
  \right.
$$
some $\kappa\in\Z$, and 
$|\epsilon(n)|\!\le\sfrac23$ for $p\!>\!3$,
$|\epsilon(n)|\!\le\sfrac32$ for $p\!=\!3$ and 
$|\epsilon(n)|\!\le8\sfrac12$ for $p\!=\!2$.
If $l\ne p$ or $l\nmid\deg\phi$, then $\epsilon(n)=0$.
\end{theorem}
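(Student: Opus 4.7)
The plan is to apply Theorem~\ref{main} with $K=\Q$ and $F=\Q(l^n)$, and then read off the $p$-adic valuation of each factor on the right. Since both quotients on the left are $p$-primary, Remark~\ref{remarks}(1) lets us reduce to $\deg\phi=p^k$, so Theorem~\ref{main} becomes an honest equality. I would split the analysis into the global/archimedean part, the contributions from bad primes $v\ne l$, and the contribution at $v=l$, which is the only source of both the linear $l^n$-growth and the fluctuation $\epsilon(n)$.

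The global terms are routine: the torsion quotient $|E(\Q(l^n))[p^\infty]|^2/|E'(\Q(l^n))[p^\infty]|^2$ stabilises by Corollary~\ref{cyctorsstab} (Imai--Serre); the archimedean factor is $1$ because $\Q(l^n)$ is totally real; and $(\Omega_{E'}/\Omega_E)^{l^n}$ contributes $l^n\ord_p(\Omega_{E'}/\Omega_E)$ by Lemma~\ref{perrat}. For a bad prime $v\ne l$, $\Q_\infty/\Q$ is unramified at $v$, so $\Om_\phi$ vanishes by Lemma~\ref{semom0}(2); moreover no rational prime equals its own Teichm\"uller lift in $\Z_l$ (the integer $(l-1)$th roots of unity being $\pm1$), so Frobenius at $v$ has infinite order in $\Gal(\Q_\infty/\Q)\iso\Z_l$ and the number $n_{v,n}$ of places above $v$ stays bounded. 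With Theorem~\ref{tammain} this makes $\gamma_v^{n_{v,n}}$ eventually constant, contributing only to $\kappa$.

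The prime $v=l$ is totally ramified, so $n_{l,n}=f_{l,n}=1$ and
$$
  \gamma_l \;=\; \frac{c_w(E'/F)}{c_w(E/F)}\cdot l^{\Om_\phi(\Q_l(l^n))}.
$$
By Corollary~\ref{alphacyc} (applied with tower prime $l$), $\Om_\phi(\Q_l(l^n))=l^n\mu_l+\epsilon_l(n)$ with $\mu_l=0$ unless $l=p\mid\deg\phi$ and $E$ has additive potentially supersingular reduction, in which case $\mu_l=\tfrac{1}{12}\ord_p(\Delta_{E'}/\Delta_E)$ and $|\epsilon_l(n)|\le\tfrac{2}{3},\tfrac{1}{2},\tfrac{13}{2}$ for $l=p\ge5$, $l=p=3$, $l=p=2$ respectively. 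Since $\ord_p(l^{\Om_\phi})$ equals $\Om_\phi$ when $l=p$ and $0$ otherwise, the $l^n\mu_l$-part only contributes when $l=p$; setting $\mu=\ord_p(\Omega_{E'}/\Omega_E)+\mu_l$ then matches the stated formula, the cases with $\mu_l=0$ coinciding exactly with those in which $\ord_p(\Delta_{E'}/\Delta_E)=0$ by \cite{isogloc}~Table~1. The Tamagawa factor at $v=l$ stabilises by Theorem~\ref{tammain} unless $l\mid\deg\phi$ and $E$ is wild potentially supersingular at $l$; in that exceptional case Corollary~\ref{ellcyctam} shows it takes the form $C\alpha_n$ with $\alpha_n\in\{1,2,3,4,\tfrac12,\tfrac13,\tfrac14\}$, so $\ord_p C$ absorbs into $\kappa$ and the residual $p$-adic fluctuation of $\alpha_n$ is bounded by $0$, $1$, $2$ for $p\ge5$, $p=3$, $p=2$.

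Summing the $\Om_\phi$- and Tamagawa-fluctuations yields $|\epsilon(n)|\le\tfrac{2}{3},\tfrac{3}{2},\tfrac{17}{2}$ for $p\ge5$, $p=3$, $p=2$. When $l\ne p$ or $l\nmid\deg\phi$, Corollary~\ref{alphacyc} forces $\epsilon_l=0$ and Theorem~\ref{tammain} rules out Tamagawa oscillation, so $\epsilon(n)=0$ as claimed. The main technical obstacle has already been dispatched in Theorems~\ref{ommain} and~\ref{tammain} (with their cyclotomic refinements Corollaries~\ref{alphacyc} and~\ref{ellcyctam}); what remains is a careful bookkeeping that combines these local inputs with the global identity of Theorem~\ref{main}.
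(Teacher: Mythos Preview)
Your proposal is correct and follows essentially the same route as the paper: apply Theorem~\ref{main}, kill the torsion term with Corollary~\ref{cyctorsstab}, and then read off $\mu$ and $\epsilon(n)$ from the cyclotomic refinements Corollary~\ref{alphacyc} (for $\Om_\phi$) and Corollary~\ref{ellcyctam} (for Tamagawa numbers). Your explicit Frobenius argument for why bad primes $v\ne l$ have bounded $n_{v,n}$ is the content hidden in the paper's bare citation of Corollary~\ref{ellcyctam}, and your reconciliation of the $\mu_l$-formula with the theorem's $\frac{1}{12}\ord_p(\Delta_{E'}/\Delta_E)$ via \cite{isogloc}~Table~1 is exactly what is needed; the final bounds $\tfrac{2}{3},\tfrac{3}{2},\tfrac{17}{2}$ match the paper's $\tfrac{2}{3},\tfrac{3}{2},8\tfrac{1}{2}$.
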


\begin{proof}
To find the Selmer quotient, we apply Theorem \ref{main}.
The torsion contribution is eventually constant by Corollary \ref{cyctorsstab}.
Now compute the $\gamma_{v,n}$ (see Notation \ref{notgammav}) 
for $K_n=\Q(l^n)$ and all primes $v$ of $\Q$.
By Corollary \ref{ellcyctam}, the product of Tamagawa quotients stabilises
unless $l\le 3$, $l|\deg\phi$ and $E$ 
\linebreak\pagebreak\par\noindent
has wild potentially 
supersingular reduction at $l$.
In that case, the term is 
of the form $l^{\xi(n)}$ with 
$|\xi(n)|\le 1$ when $l=3$ and $|\xi(n)|\le 2$ when $l=2$. 
By Corollary \ref{alphacyc}, the $\Om_\phi$-term is 1 
unless $l=p$ and $E$ has additive potentially
supersingular reduction at $l$, in which case it is 
$$
  p^{{\frac{l^n}{12}{\ord_p(\frac{\Delta_{E'}}{\Delta_E})}} + \eta(n)},
$$
with $|\eta(n)|$ bounded by $\sfrac23,\sfrac12,6\sfrac12$ for $p>3$,
$p=3$ and $p=2$, respectively. The claim follows, with the asserted bounds 
for $\epsilon(n)=\xi(n)+\eta(n)$.
\end{proof}

\begin{corollary}
\label{pf1.1}
Theorem \ref{imain} holds.
\end{corollary}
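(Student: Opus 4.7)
The plan is to deduce Theorem~\ref{imain} as a short bookkeeping consequence of Theorem~\ref{cycmain} and Corollary~\ref{corselellcyc}, both already established above.

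First, I would invoke Theorem~\ref{cycmain}, which for all sufficiently large $n$ gives
$$
\frac{|\Sel^{\div}_{p^\infty}(E/\Q(l^n))[\phi]|}{|\Sel^{\div}_{p^\infty}(E'\!/\Q(l^n))[\phi^t]|}\cdot\frac{|\shazero_{E/\Q(l^n)}[p^\infty]|}{|\shazero_{E'\!/\Q(l^n)}[p^\infty]|} = p^{\,\mu l^n + \kappa + \epsilon(n)},
$$
with exactly the $\mu$ prescribed by Theorem~\ref{imain}, some $\kappa\in\Z$, and $|\epsilon(n)|$ bounded by $\sfrac23$, $\sfrac32$, $8\sfrac12$ for $p>3$, $p=3$, $p=2$ respectively. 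The uniform bound $|\epsilon(n)|\le 8\sfrac12$ claimed in Theorem~\ref{imain} is just the worst of these, and the vanishing of $\epsilon(n)$ when $l\ne p$ or $l\nmid\deg\phi$ is inherited directly. That $\mu\in\frac1{12}\Z$ follows from Lemma~\ref{perrat} (which forces $\ord_p(\Omega_{E'}/\Omega_E)\in\Z$) together with the manifestly $\frac1{12}\Z$-valued second summand.

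Next, I would eliminate the divisible Selmer factor from the left-hand side. By Corollary~\ref{corselellcyc} — whose input is Kato's bound on the cyclotomic $p^\infty$-Selmer rank — each of $|\Sel^{\div}_{p^\infty}(E/\Q(l^n))[\phi]|$ and $|\Sel^{\div}_{p^\infty}(E'\!/\Q(l^n))[\phi^t]|$ is eventually constant in $n$, so their ratio is eventually $p^{\kappa'}$ for some $\kappa'\in\Z$. Dividing the displayed identity through by this factor yields
$$
\frac{|\shazero_{E/\Q(l^n)}[p^\infty]|}{|\shazero_{E'\!/\Q(l^n)}[p^\infty]|} = p^{\,\mu l^n + (\kappa-\kappa') + \epsilon(n)},
$$
so setting $\nu=\kappa-\kappa'\in\Z$ finishes the proof, the $\mu$ and $\epsilon(n)$ data being unchanged.

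There is no real obstacle at this stage: the genuinely substantive inputs — the isogeny-invariance of Birch--Swinnerton-Dyer (Theorem~\ref{thmsel}), the local behaviour of $\Om_\phi$, of Tamagawa numbers, and of torsion (\S\S\ref{som}--\ref{stors}), and Kato's theorem underlying Corollary~\ref{corselellcyc} — have all been handled earlier. Corollary~\ref{pf1.1} amounts to reading them off and absorbing a finite ambiguity into the integer constant $\nu$.
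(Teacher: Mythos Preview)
Your proposal is correct and follows exactly the same approach as the paper, whose proof is the single line ``Combine Theorem~\ref{cycmain} and Corollary~\ref{corselellcyc}.'' You have simply spelled out the bookkeeping that this combination entails.
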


\begin{proof}
Combine Theorem \ref{cycmain} and Corollary \ref{corselellcyc}.
\end{proof}

\begin{proposition}
\label{gammamain}
Let $K\subset K_1\subset K_2\subset\ldots$ be a tower
of Galois extensions of~$K$, and $\phi: A\to A'$ an isogeny
of abelian varieties over~$K$. For a prime $v$ of $K$ and a rational prime $p$,
we have
\begin{itemize}
\item[(1)]
If $v\nmid p$, or $A$ is semistable at $v$, or $A$ is an elliptic curve which
does not have additive potentially supersingular reduction at $v$,
then $\ord_p\gamma_{v,n}$ is constant for sufficiently large $n$.
\end{itemize}
Suppose that $v|p$ and $A=E, A'=E'$ are elliptic curves with potentially good 
reduction at $v$. Write 
$\delta=\delta_{E/K_v}, \delta'=\delta_{E'\!/K_v}, \eth=\eth_{E/K_v}, \eth'=\eth_{E'\!/K_v}$
as in Theorem \ref{ommain}.
Then, for all sufficiently large $n$,
$$
  \ord_p\gamma_{v,n} = \mu_v e_{v,n} f_{v,n} + 
    \epsilon_{v,n} f_{v,n} f_{K_v/\Q_p} + z_n, 
$$
with
\begin{itemize}
\item[(2)]
$\mu_v=f_{K_v/\Q_p}\frac{\delta'-\delta}{12}$, 
$\epsilon_{v,n}=\{\frac{e_{v,n}\delta}{12}\}-\{\frac{e_{v,n}\delta'}{12}\}$,
and $z_n$ is constant
if $E$ has tame reduction at~$v$.
\item[(3)]
$\mu_v=f_{K_v/\Q_p}\frac{\eth'-\eth}{12}$, 
$\epsilon_{v,n}=\{\frac{e_{v,n}\eth}{12}\}-\{\frac{e_{v,n}\eth'}{12}\}$,
$|z_n|\le 2$
if all $K_n/K$ are tamely ramified at $v$.
\item[(4)]
$\mu_v=f_{K_v/\Q_p}\frac{\delta'-\delta}{12}$, 
$|\epsilon_{v,n}|\le\frac 23$, $|z_n|\le 1$
if $p=3$.
\item[(5)]
$\mu_v=f_{K_v/\Q_p}\frac{\delta'-\delta}{12}$,
$\epsilon_{v,n}=O(1)$,  $|z_n|\le 2$
if $p=2$ and
all upper ramification subgroups of the inertia group 
$I_{\bigcup\!K_n/K}$ at $v$ have finite index.
\end{itemize}
\end{proposition}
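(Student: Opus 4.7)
The strategy is to split
$$
  \ord_p \gamma_{v,n} \;=\; \ord_p \frac{c_w(A'\!/K_n)}{c_w(A/K_n)} \;+\; \mathbf{1}_{v\mid p}\cdot f_{v,n}\, f_{K_v/\Q_p}\,\Om_\phi(K_{n,w}),
$$
where $w\mid v$ is any prime of $K_n$ (all are Galois-conjugate and give the same value), and to handle each summand using the results of \S\ref{stam} and \S\ref{som} respectively. Note that $|k_w|$ is a power of $p$ precisely when $v\mid p$, which is why the indicator appears on the second summand.

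\textbf{Case (1).} Each of the three alternatives is designed to kill one or both summands. If $v\nmid p$, then the second summand vanishes because $|k_w|$ is a power of the residue characteristic of $v$, which is not $p$; and the $p$-adic valuation of the Tamagawa ratio stabilises by Theorem \ref{tammain} (elliptic case) or by the semistable abelian variety theorem at the end of \S\ref{stam}, noting that the single unstable case in Theorem \ref{tammain} (wild potentially supersingular with $l\mid\deg\phi$) is ruled out by the other two alternatives of (1). If $A$ is semistable at $v$, then $\Om_\phi\equiv 0$ by Lemma \ref{semom0}(1) and the Tamagawa ratio stabilises by the same semistable theorem. If $A$ is an elliptic curve without additive potentially supersingular reduction at $v$, then $\Om_\phi\equiv 0$ by Theorem \ref{ommain}(1) and Theorem \ref{tammain} again handles the Tamagawa ratio.

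\textbf{Cases (2)--(5).} Here $v\mid p$ and $E$ has additive potentially good reduction. Feed the local extension $K_{n,w}/K_v$, of ramification degree $e_{v,n}$, into the matching part of Theorem \ref{ommain}: this writes $\Om_\phi(K_{n,w})=e_{v,n}\mu+\epsilon(K_{n,w})$ with $\mu$ and $\epsilon$ precisely as claimed, producing the main $\mu_v e_{v,n}f_{v,n}$-term with $\mu_v=f_{K_v/\Q_p}\mu$, and the error $\epsilon_{v,n} f_{v,n} f_{K_v/\Q_p}$. The remaining $z_n$ is $\ord_p$ of the Tamagawa ratio, which by Theorem \ref{tammain} for potentially good reduction lies in $\ord_p\{1,2,3,4,\tfrac12,\tfrac13,\tfrac14\}$; this immediately yields $|z_n|\le 2,\,1,\,2$ in (3), (4), (5) respectively. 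In (2), the tame Kodaira types of $E,E'$ over $K_{n,w}$ stabilise (either the tower ramifies enough to carry $E$ to good reduction and both types become $\In{0}$, or the tower is eventually unramified and the types simply cease to change), so $z_n$ is eventually constant.

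\textbf{Main obstacle.} The delicate step is (5). To apply Theorem \ref{ommain}(5) one picks $r>f_{E/K_v}/2-1$ and needs the ramification degree $e_{\L_n/K_v}$ of the field $\L_n\subset K_{n,w}$ cut out by $I_{K_v}^r$ to be bounded uniformly in $n$. The conductor $f_{E/K_v}$ is independent of $n$, so $r$ can be fixed once and for all; the hypothesis that every upper ramification subgroup of the inertia $I=I_{\bigcup K_n/K}$ at $v$ has finite index then gives $[\L_n:K_v^{\mathrm{ur}}]\le[I:I_{K_v}^r]<\infty$ uniformly in $n$, whence $|\epsilon_{v,n}|=O(1)$. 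With this one input secured, the rest of (2)--(5) reduces to assembling Theorem \ref{ommain} with the finite list of possible Tamagawa-ratio values from \cite{isogloc}~Table~1.
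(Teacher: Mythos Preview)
Your proposal is essentially the paper's own proof spelled out: the paper says only ``Combine Lemma \ref{semom0}(1), Theorem \ref{ommain} and Theorem \ref{tammain}'', and you have carried out exactly that combination, correctly separating $\ord_p\gamma_{v,n}$ into the Tamagawa piece and the $\Om_\phi$ piece and feeding each into the matching result.

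One logical slip to tighten. In Case (1), first alternative ($v\nmid p$), you write that the unstable case of Theorem \ref{tammain} (wild potentially supersingular with $l\mid\deg\phi$) is ``ruled out by the other two alternatives of (1)''. That is backwards: the three conditions in (1) form a disjunction, so when only $v\nmid p$ holds and $E$ is an elliptic curve with wild potentially supersingular reduction at $v$, you are genuinely in the exceptional case of Theorem \ref{tammain} and the Tamagawa ratio need not stabilise. What saves you is that its $p$-part does: the residue characteristic $\ell\in\{2,3\}$ is $\neq p$, and the Kodaira-type analysis in the proof of Theorem \ref{tammain} (types $\IV,\IVS$ are tame for $\ell=2$ and types $\III,\IIIS,\InS{n}$ are tame for $\ell=3$) forces $c_E,c_{E'}\in\{1,2,4\}$ when $\ell=2$ and $c_E,c_{E'}\in\{1,3\}$ when $\ell=3$, so $\ord_p$ of the ratio is $0$.

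A smaller point: in (2) your argument that Kodaira types stabilise is unnecessary. Since $E$ has tame reduction you are automatically outside the exceptional case of Theorem~\ref{tammain}, so the Tamagawa ratio itself stabilises and $z_n$ is eventually constant directly.
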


\begin{proof}
Combine 
Lemma \ref{semom0}(1), 
Theorem \ref{ommain} and Theorem \ref{tammain}.
\end{proof}

\begin{remark}
\label{gammaconst}
For sufficiently large $n$,
the constants $\ord_p\gamma_{v,n}$ in (1) and $z_n$ in (2) 
are just $\ord_p\frac{c_w(A'\!/K_{n})}{c_w(A/K_{n})}$,
where $w$ is a prime of $K_n$ above $v$.
If $K_\infty/K_n$ is unramified at primes above $v$ for $n$ sufficiently large,
then $z_n=\ord_p\frac{c_w(A'\!/K_{n})}{c_w(A/K_{n})}$
and $\epsilon_{v,n}$ are constants in (3)--(5) as well,
by Lemma \ref{semom0}(2) and Theorem \ref{tammain}.
\end{remark}

\pagebreak

\begin{theorem}
\label{Zlmain}
Let $K$ be a number field, $p$ a prime, 
and $K_\infty\!=\!\cup_n K_n$ a \hbox{$\Zl$-extension} of $K$,
with $[K_n:K]=l^n$. Let $\phi: E\to E'$ be an isogeny of 
elliptic curves over~$K$. Then
$$
\frac{|\Sel^{\div}_{p^\infty}(E/K_n)[\phi]|}
     {|\Sel^{\div}_{p^\infty}(E'\!/K_n)[\phi^t]|} 
     \frac{|\shazero_{E/K_n}[p^\infty]|}{|\shazero_{E'\!/K_n}[p^\infty]|} 
         =
  p^{\,\mu l^n+O(1)},
  \quad 
  \mu=\ord_p \bigl(\frac{\Omega_{E'\!/K}}{\Omega_{E/K}}\bigr)+\sum_{v} \mu_v,
$$
where the sum is taken over the primes $v$ of bad reduction for $E/K$, and
\begingroup
\smaller[1]
$$
  \quad\mu_v = \left\{
    \begin{array}{llll}
    \ord_p\frac{c_v(E'\!/K)}{c_v(E/K)} & \text{if } v\text{ is totally split in $K_\infty/K$},& \cr
    \frac{f_{K_v/\Q_p}}{12}{\ord_v(\frac{\Delta_{E'\!/K}}{\Delta_{E/K}})}& \text{if $l\!=\!p,$\ $v|p$ is ramified in $K_\infty/K$
      and $\ord_v j_E\!\ge\!0$},\cr
    0 & \text{otherwise.} \cr
    \end{array}
  \right.
$$
\endgroup
\end{theorem}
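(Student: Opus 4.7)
The strategy is to apply Theorem \ref{main} with $F=K_n$ and take $\ord_p$ of each factor, then show that the dominant term is $\mu l^n$ while the remainder is $O(1)$. By Theorem \ref{main} the left-hand side equals
$$
\frac{|E(K_n)[p^\infty]|^2}{|E'(K_n)[p^\infty]|^2}\,
\Bigl(\tfrac{\Omega_{E'\!/K}}{\Omega_{E/K}}\Bigr)^{l^n}
\prod_v \gamma_{v,n}^{\,n_{v,n}},
$$
up to the $p$-part, using that $E^t=E$ and $E'^t=E'$. By Proposition \ref{torsstab} the $p$-part of the torsion quotient is bounded in absolute value by $\ord_p\deg\phi$, so contributes $O(1)$; the period factor contributes $\ord_p(\Omega_{E'\!/K}/\Omega_{E/K})\cdot l^n$ exactly.

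The bulk of the work is analysing $\prod_v\gamma_{v,n}^{n_{v,n}}$ prime by prime. I would first note the structural fact that for a $\Z_l$-extension $K_\infty/K$, at each prime $v$ of $K$ the decomposition and inertia subgroups of $\Gal(K_\infty/K)\iso\Z_l$ are closed, hence either trivial or of finite index. Consequently, for $n$ large, either $v$ is totally split (so $n_{v,n}=l^n$, $e_{v,n}=f_{v,n}=1$), or $n_{v,n}$ is bounded and the full $[K_n:K]=l^n$ is absorbed into $e_{v,n}f_{v,n}$. At good primes $\gamma_{v,n}=1$, so only bad primes contribute.

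Next I would split the bad primes by reduction type and apply Proposition \ref{gammamain}. In the cases covered by part (1) of that proposition (i.e.\ $v\nmid p$, or $E$ semistable at $v$, or $E$ not additive potentially supersingular), $\ord_p\gamma_{v,n}$ is eventually constant. If such a $v$ is totally split in $K_\infty/K$ then $K_{n,w}=K_v$ so $\Om_\phi(K_{n,w})=0$ and $\gamma_{v,n}=c_v(E'\!/K)/c_v(E/K)$, contributing $\ord_p(c_v(E'\!/K)/c_v(E/K))\cdot l^n$; if not, the bounded $\ord_p\gamma_{v,n}$ is multiplied by the bounded $n_{v,n}$ and contributes $O(1)$. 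For the remaining cases ($l=p$, $v\mid p$, ramified, $E$ with additive potentially good reduction and $\ord_v j_E\ge 0$), parts (2)--(5) of Proposition \ref{gammamain} give
$$
\ord_p\gamma_{v,n} = \mu_v e_{v,n}f_{v,n} + \epsilon_{v,n}f_{v,n}f_{K_v/\Q_p}+z_n
$$
with $\mu_v=\tfrac{f_{K_v/\Q_p}}{12}\ord_v(\Delta_{E'}/\Delta_E)$, $\epsilon_{v,n}=O(1)$ and $z_n=O(1)$. Since $v$ is ramified, $n_{v,n}$ is bounded by the above structural observation, so multiplying by $n_{v,n}$ and using $e_{v,n}f_{v,n}n_{v,n}=l^n$ yields a contribution $\mu_v l^n + O(1)$. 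For the potentially multiplicative case ($\ord_v j_E<0$) the $\Om_\phi$ term vanishes by Theorem \ref{ommain}(1) and the Tamagawa quotient stabilises by Theorem \ref{tammain}, so only the totally-split subcase gives growth, precisely matching the formula.

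The main obstacle is the wild case $p=l=2$, where Proposition \ref{gammamain}(5) requires that the upper ramification subgroups of the inertia in $\Gal(K_\infty/K)$ at $v$ have finite index. This is automatic here: the inertia is a closed, hence open (if nontrivial), subgroup of $\Z_l$, and by local class field theory its upper ramification filtration has finite-index members. Granting this, $\epsilon_{v,n}=O(1)$ and the bookkeeping closes. Summing the $\mu_v l^n$ terms over the bad primes together with the period contribution gives the asserted $\mu$, and all remaining error terms are $O(1)$, completing the proof.
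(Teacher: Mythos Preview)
Your approach is essentially the paper's: apply Theorem \ref{main}, bound torsion by Proposition \ref{torsstab}, and analyse $\gamma_{v,n}^{n_{v,n}}$ via Proposition \ref{gammamain}. Two small gaps remain.

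First, your ``remaining cases'' are not automatically of the form ``$l=p$, $v\mid p$, ramified''. The complement of Proposition \ref{gammamain}(1) is just ``$v\mid p$ and $E$ additive potentially supersingular at $v$''; such a $v$ may well be unramified or even totally split in $K_\infty/K$ (e.g.\ whenever $l\ne p$, or when $l=p$ but the particular prime above $p$ happens not to ramify). In those subcases Proposition \ref{gammamain}(1) does not apply, but the conclusion you want still holds: $\Om_\phi(K_{n,w})=0$ by Lemma \ref{semom0}(2) and the Tamagawa quotient stabilises by Theorem \ref{tammain} (since $e_{v,n}\not\to\infty$), so $\gamma_{v,n}$ is eventually constant and contributes $\mu_v l^n$ if $v$ is totally split and $O(1)$ otherwise. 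The paper sidesteps this by splitting first on decomposition behaviour (totally split / unramified non-split / ramified) and only afterwards on reduction type within the ramified case.

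Second, in the ramified case you note that $n_{v,n}$ is bounded but not that $f_{v,n}$ is; you need this for $n_{v,n}\,\epsilon_{v,n}f_{v,n}f_{K_v/\Q_p}=O(1)$. It follows because a nontrivial closed inertia subgroup $I_v\subset\Z_l$ is open, so $D_v/I_v$ is finite and $f_{v,n}$ is eventually constant. (You also silently pass over the Archimedean places; they are harmless here since $\Z_l$ has no order-$2$ elements, so real places stay real and $\gamma_{v,n}=1$.) With these patches your argument matches the paper's.
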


\begin{proof}
%
We apply Theorem \ref{main}. 
The torsion contribution is $O(1)$ by Proposition \ref{torsstab}.
All Archimedean places are totally split in $K_\infty/K$,
since $\Z_l$ has no elements of order 2, so $\gamma_{v,n}=1$ for $v|\infty$.
It remains to show
that $\ord_p(\gamma_{v,n}^{n_{v,n}})=\mu_v l^n+O(1)$ for primes $v$ of $K$ 
of bad reduction for $E, E'$. 
Because the inertia and the decomposition subgroups at $v$ are closed 
subgroups of $\Z_l$, the possible local behaviours of $K_\infty/K$
at $v$ are
\begin{enumerate}
\item 
$K_\infty/K$ is totally split at $v$, so $n_{v,n}=l^n$, $e_{v,n}=1$, $f_{v,n}=1$.
\item
$K_\infty/K$ is not totally split but is unramified at $v$, 
so $n_{v,n}\!=\!c$, $e_{v,n}\!=\!1$, $f_{v,n}\!=\!l^n/c$ 
for some constant $c$ and all large enough $n$.
\item
$K_\infty/K$ is ramified at $v$,
so $n_{v,n}=c_1$, $e_{v,n}=l^n/c_1c_2$, $f_{v,n}=c_2$ 
for some constants $c_1, c_2$ and all large enough $n$; here necessarily $v|l$.
\end{enumerate}

In the first case, $\Om_\phi(K_{n,w})=0$ for $w|v$ since $K_{n,w}=K_v$, 
and hence $\ord_p(\gamma_{v,n}^{n_{v,n}})=\mu_v l^n$.
In the second case, $\Om_\phi$ is still 0 by Lemma \ref{semom0}(2), so
$\gamma_{v,n}^{n_{v,n}}$ is a finite product of bounded quotients of 
Tamagawa numbers, which is $O(1)$.

Assume we are in the third case.

If $v\nmid p$ or $E$ does not have additive potentially good reduction at $v$, 
then $\ord_p\gamma_{v,n}$ is eventually constant 
by Proposition \ref{gammamain} (1),
and hence so is $\ord_p(\gamma_{v,n}^{n_{v,n}})$; 
in particular, it is again $O(1)$.
Also $\mu_v=0$: either $v\nmid p$, or 
$E, E'$ have good reduction at $v$ and $\ord_v\Delta_{E/K}=\ord_v\Delta_{E'\!/K}=0$,
or $E, E'$ have potentially multiplicative reduction (i.e. 
$\ord_v j_E<0$).

Suppose $v|p$ and $E$ has additive potentially good reduction at $v$.
Now apply Proposition \ref{gammamain} (2) for $p>3$, (4) for $p=3$
and (5) for $p=2$ (noting that by class field theory,
all upper ramification groups in the inertia group of $\Gal(K_\infty/K)=\Z_l$ 
are of finite index) to find that
$$
  \ord_p(\gamma_{v,n}^{n_{v,n}})  =
   \bigl(f_{K_v/\Q_p}\tfrac{\delta'-\delta}{12} e_{v,n} f_{v,n} \!+\! 
    \epsilon_{v,n} f_{v,n} f_{K_v/\Q_p} \!+\! z_n\bigr)n_{v,n}
      = \mu_v l^n + O(1).
$$
\end{proof}

\begin{corollary}
\label{pf1.2}
Theorem \ref{iZlmain} holds.
\end{corollary}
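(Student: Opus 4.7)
The plan is to reduce directly to Theorem \ref{Zlmain} and Corollary \ref{rkpcor}; the work is essentially bookkeeping since the substantive local analysis has already been done. First I would observe that the main equality asserted in Theorem \ref{iZlmain} coincides term-for-term with the conclusion of Theorem \ref{Zlmain}. The constant $\mu$ is given in both places by the same sum $\ord_p(\Omega_{E'\!/K}/\Omega_{E/K})+\sum_v \mu_v$, with the identical case split on $v$ according to whether $v$ is totally split in $K_\infty/K$, or is a ramified prime above $p$ with $\ord_v j_E\ge 0$, or neither. So the first assertion of Theorem \ref{iZlmain} is literally the statement of Theorem \ref{Zlmain}, and nothing more needs to be said.

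For the second assertion, the extra hypothesis is that $\rk_p E/K_n$ is bounded, and the goal is to extract the $\shazero$-ratio alone from the combined formula. For this I would show that, under the bounded-rank hypothesis, the divisible Selmer ratio
$$
\frac{|\Sel^{\div}_{p^\infty}(E/K_n)[\phi]|}{|\Sel^{\div}_{p^\infty}(E'\!/K_n)[\phi^t]|}
$$
contributes only $p^{O(1)}$ to the exponent, and may therefore be absorbed into the $O(1)$ error term in Theorem \ref{Zlmain}. This is precisely Corollary \ref{rkpcor}, itself an immediate consequence of Lemma \ref{rkplem}: the $p$-adic valuation of the numerator is at most $\rk_p E/K_n\cdot\ord_p\deg\phi$, and similarly for the denominator with $\phi^t$ (which has the same degree as $\phi$). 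When $\rk_p E/K_n = O(1)$, this bound is $O(1)$, so dividing the displayed formula in Theorem \ref{Zlmain} by the Selmer ratio yields $|\shazero_{E/K_n}[p^\infty]|/|\shazero_{E'\!/K_n}[p^\infty]| = p^{\mu l^n + O(1)}$, as required.

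There is no real obstacle at this stage; the heavy lifting, namely the analysis of the local factors $\gamma_{v,n}$ at the ramified primes above $p$ — including the delicate wild potentially supersingular cases in residue characteristics $2$ and $3$, and the class-field-theoretic input that upper ramification subgroups in $\Gal(K_\infty/K)=\Z_l$ have finite index — has already been carried out in Proposition \ref{gammamain} and used in the proof of Theorem \ref{Zlmain}. I would note in passing that one does not need the stronger stabilisation results of Theorem \ref{divselstab} here: the crude rank bound from Lemma \ref{rkplem} is sufficient, precisely because the target statement has an $O(1)$ error built in rather than asking for an eventually constant term.
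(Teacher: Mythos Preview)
Your proposal is correct and follows essentially the same approach as the paper, which simply says ``Combine Theorem \ref{Zlmain} and Corollary \ref{rkpcor}.'' Your elaboration on why the crude rank bound suffices (rather than the stabilisation result of Theorem \ref{divselstab}) is accurate and matches the paper's intent.
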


\begin{proof}
Combine Theorem \ref{Zlmain} and Corollary \ref{rkpcor}.
\end{proof}
%


\pagebreak

\begin{theorem}
\label{Lie}
Suppose $K$ is a number field and $K_\infty/K$ a Galois extension 
whose Galois group is a $d$-dimensional $l$-adic Lie group; 
write $K_n/K$ for its $n$th layer in the natural Lie filtration. 
Let $p$ be a prime number, and $\phi: A\to A'$ a $K$-isogeny 
of abelian varieties. 
If either $A, A'$ are semistable or if they are
elliptic curves that do not have additive potentially supersingular 
reduction at primes $v|p$ that are infinitely ramified in $K_\infty/K$,
then there are constants $\mu_1,...,\mu_d\in\Q$ such that for all 
sufficiently large $n$,
$$
\frac{|\Sel^{\div}_{p^\infty}(A/K_n)[\phi]|}
     {|\Sel^{\div}_{p^\infty}(A'^t/K_n)[\phi^t]|} 
  \frac{|\shazero_{A/K_n}[p^\infty]|}{|\shazero_{A'\!/K_n}[p^\infty]|} 
  \frac{|A'(K_n)[p^\infty]||A'^t(K_n)[p^\infty]|}{|A(K_n)[p^\infty]||A^t(K_n)[p^\infty]|}
= \qquad \qquad 
$$
$$
\qquad \qquad \qquad \qquad \qquad \qquad \qquad \qquad \qquad 
=p^{\,\mu l^{dn} + \mu_1 l^{(d-1)n} + \ldots + \mu_{d-1}l^n + \mu_d}.
$$
If $A$ is a general elliptic curve, then there is
$\mu\in\Q$ such that for all sufficiently large $n$, 
$$
\frac{|\Sel^{\div}_{p^\infty}(A/K_n)[\phi]|}
     {|\Sel^{\div}_{p^\infty}({A'}/K_n)[\phi^t]|} 
  \frac{|\shazero_{A/K_n}[p^\infty]|}{|\shazero_{A'\!/K_n}[p^\infty]|} 
  = 
p^{\,\mu l^{dn} + O(l^{(d-1)n})}.
$$
\end{theorem}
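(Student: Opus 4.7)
The plan is to apply Theorem \ref{main}, which writes the left-hand side as the product of a torsion ratio, a global period factor, and a product of local factors $\gamma_{v,n}^{n_{v,n}}$. Since $[K_n:K] = l^{dn}$ in the natural Lie filtration, the period factor contributes exactly $p^{\mu_0 l^{dn}}$ with $\mu_0 = \ord_p(\Omega_{A'\!/K}/\Omega_{A/K}) \in \Q$. The remaining work is to analyse the local factors $\gamma_{v,n}^{n_{v,n}}$ at each place $v$ and show they assemble into the stated polynomial-in-$l^n$ expression.

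\textbf{Local structure in the Lie filtration.} Let $D_v \supseteq I_v$ denote the decomposition and inertia subgroups of $\Gal(K_\infty/K)$ at $v$; these are closed $l$-adic Lie subgroups of dimensions $d_v \ge e_v$ with $0 \le e_v \le d_v \le d$. For large $n$, compatibility of the natural filtration with the filtrations of these subgroups gives
$$n_{v,n} = c_v^{(1)} l^{(d-d_v)n}, \quad f_{v,n} = c_v^{(2)} l^{(d_v - e_v)n}, \quad e_{v,n} = c_v^{(3)} l^{e_v n},$$
with positive integer constants, subject to $n_{v,n} e_{v,n} f_{v,n} = l^{dn}$. Proposition \ref{gammamain}, applied at $v$ according to the reduction type and ramification behaviour, then furnishes an explicit expression for $\ord_p \gamma_{v,n}$ as a combination of $e_{v,n} f_{v,n}$, $f_{v,n}$ and bounded quantities.

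\textbf{Clean polynomial case.} For semistable $A, A'$ Lemma \ref{semom0}(1) gives $\Om_\phi = 0$ at every prime; for elliptic curves avoiding additive potentially supersingular reduction at infinitely ramified primes above $p$, Theorem \ref{ommain}(1) does the same wherever it matters. The theorem at the end of Section \ref{stam} (for semistable abelian varieties) or Theorem \ref{tammain} (for elliptic curves) then gives exact polynomial growth of the relevant Tamagawa numbers of the shape $C \cdot e_{v,n}^{r_v}$ for large $n$. Combined with the local structure above, each $\ord_p \gamma_{v,n}^{n_{v,n}}$ becomes an honest polynomial in $l^n$ of degree at most $d$ with rational coefficients. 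Summing over $v$, adding the period contribution, and moving the bounded torsion ratio (Proposition \ref{torsstab}) to the left-hand side yields the stated clean polynomial formula $p^{\mu l^{dn}+\mu_1 l^{(d-1)n}+\ldots+\mu_d}$.

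\textbf{General elliptic case and main obstacle.} For a general elliptic curve, the only extra phenomenon is potentially supersingular reduction at wild primes $v|p$ that are infinitely ramified in $K_\infty/K$. By Proposition \ref{gammamain}(3)--(5),
$$\ord_p \gamma_{v,n}^{n_{v,n}} = \mu_v\, l^{dn} + \epsilon_{v,n}\, f_{v,n}\, n_{v,n}\, f_{K_v/\Q_p} + z_n\, n_{v,n},$$
where $\mu_v = f_{K_v/\Q_p}(\delta'-\delta)/12$, and $\epsilon_{v,n}, z_n$ are bounded. Infinite ramification forces $e_v \ge 1$, whence $f_{v,n} n_{v,n} = O(l^{(d-1)n})$ and $n_{v,n} = O(l^{(d-1)n})$, so the fluctuating terms collectively contribute $O(l^{(d-1)n})$. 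The torsion ratio is bounded by Proposition \ref{torsstab}, hence also $O(1)$. The main obstacle is identifying the leading coefficient $\mu$ correctly: one must verify that the $l^{dn}$-contributions add over three sources --- periods, totally split primes of bad reduction (where $d_v=0$ and $\gamma_{v,n}$ is constant, so $\gamma_{v,n}^{n_{v,n}}$ already has degree $l^{dn}$), and the infinitely ramified supersingular primes above --- and that contributions from all other primes genuinely lie in the $O(l^{(d-1)n})$ error. A secondary delicacy, specific to the clean polynomial case, is that the Bosch--Dokchitser formula must hold exactly (not merely asymptotically) for $n$ large, so that the result is truly a polynomial in $l^n$ rather than a polynomial plus a bounded error term.
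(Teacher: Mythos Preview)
Your approach is the paper's: apply Theorem \ref{main}, use the Lie-group structure of $D_v\supseteq I_v$ to write $e_{v,n}, f_{v,n}, n_{v,n}$ as monomials in $l^n$, and control $\ord_p\gamma_{v,n}$ via Proposition \ref{gammamain}. Two points need tightening.

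In the clean polynomial case you do not cover primes $v\mid p$ at which $E$ has additive potentially supersingular reduction but $v$ is only \emph{finitely} ramified in $K_\infty/K$. Theorem \ref{ommain}(1) does not apply there, so ``does the same wherever it matters'' is unjustified; $\Om_\phi$ need not vanish. The fix is that since $e_{v,n}$ is eventually constant, both $\Om_\phi(K_{n,w})$ and the Tamagawa quotient stabilise (Theorem \ref{tammain}, first clause), so $\ord_p\gamma_{v,n}$ is eventually constant and contributes a genuine monomial $C\,l^{(d-\dim D_v)n}$. The paper treats this via the ``otherwise $e_{v,n}$ is eventually constant'' branch and Remark \ref{gammaconst}.

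In the general elliptic case, to invoke Proposition \ref{gammamain}(5) at $v\mid p$ with $p=l=2$ you must know that every upper ramification subgroup of $I_v$ has finite index; this is Sen's theorem, which the paper cites explicitly. Likewise, for $v\mid p$ with $p\ne l$ you need the tower eventually tamely ramified at $v$ so that part (3) applies; this holds since the wild inertia is a pro-$p$ subgroup of an $l$-adic Lie group, hence finite. Without these, Proposition \ref{gammamain} does not obviously apply at the bad primes.

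Minor: the reference is Betts--Dokchitser, not Bosch--Dokchitser, and what you need from it (and from Theorem \ref{tammain}) is stabilisation of the Tamagawa \emph{quotient} $c(A')/c(A)$, not polynomial growth of the individual $c(A)$; that is what makes $\ord_p\gamma_{v,n}$ constant rather than merely polynomial in $e_{v,n}$. Also, archimedean places can contribute to the leading $l^{dn}$ term (a real $v$ becoming complex gives $\gamma_{v,n}$ constant but $n_{v,n}\sim l^{dn}/2$), which your sketch omits.
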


\begin{proof}
We will apply Theorem \ref{main}. Note that the torsion contribution 
is $O(1)$ by Proposition \ref{torsstab}, so we can ignore it for the second
claim.

Fix a prime $v$ of $K$.
The decomposition group $D_v$ and the inertia group $I_v$ 
are closed Lie subgroups. So, for all sufficiently large~$n$,
$$
\begin{array}{llllllllllll}
  e_{v,n} &=& C_1 \> l^{n(\dim I_v)}, \cr
  f_{v,n} &=& C_2 \> l^{n(\dim D_v-\dim I_v)}, \cr
  n_{v,n} &=& C_3 \> l^{n(d-\dim D_v)}. \cr
\end{array}
$$
for some constants $C_1, C_2, C_3$ (that depend on $v$). 
If $v\nmid l$, then the tower $K_\infty/K_n$ is eventually tamely ramified at $v$,
and $\dim I_v\le 1$.
If $v|l$, then, on the contrary,
all the upper ramification subgroups of $I_v$ 
are of finite index by Sen's theorem (\cite{Sen} \S4, main Theorem).

We now compute $\ord_p(\gamma_{v,n}^{n_{v,n}})$ for every place $v$ of $K$:

\emph{$v$ Archimedean.} If $v$ is real and stays real in the tower, 
or $v$ is complex, then $\gamma_{v,n}=1$. Otherwise,  
$\gamma_{v,n}\in\Q$ stabilises for large enough $n$, 
so $\ord_p(\gamma_{v,n}^{n_{v,n}})$ grows like $C l^{dn}$ 
for a suitable $C\in\Q$.

\emph{$v\nmid p$ or $A/K_v$ is semistable or 
$A/K_v$ is an elliptic curve that
is not additive potentially supersingular.}
Again, $\ord_p\gamma_{v,n}$ is constant for large $n$, by 
Proposition \ref{gammamain} (1), so 
$\ord_p(\gamma_{v,n}^{n_{v,n}})$ grows like $C l^{n(d-\dim D_v)}$
for some $C\in\Q$.

\emph{$v|p$ and $A/K_v$ is an elliptic curve.}
By Proposition \ref{gammamain}, 
for large $n$,
$$
  \ord_p\gamma_{v,n} = \mu_v e_{v,n} f_{v,n} + 
    \epsilon_{v,n} f_{v,n} f_{K_v/\Q_p} + z_n, 
$$
with $z_n$, $\mu_v$ and $\epsilon_{v,n}$ as in the proposition. 
(The proposition applies because 
either $v|l$ and the upper ramification groups at $v$ have finite index in $I_v$,
or $v\nmid l$ and $K_\infty/K_n$ are eventually tamely ramified.)
If $e_{v,n}\to\infty$, then $\dim I_v\ge 1$ and 
$\ord_p(\gamma_{v,n}^{n_{v,n}}) = C l^{dn} + O(l^{(d-1)n})$ for some $C\in\Q$.
Otherwise, $e_{v,n}$ is eventually constant, as are 
$z_n$ and $\epsilon_{v,n}$ by Remark \ref{gammaconst}.
In that case $\ord_p(\gamma_{v,n}^{n_{v,n}}) = C l^{dn} + C' l^{n(d-\dim D_v)}$
for some $C,C'\in\Q$.

Taking the product over all places $v$, and applying 
Theorem \ref{main}, we get the claim. 
(The term $\bigl(\frac{\Omega_{A'\!/K}}{\Omega_{A/K}}\bigr)^{[K_n:K]}$
gives a contribution of the form $p^{C l^{dn}}$ for some $C\in\Q$.)
\end{proof}

\begin{corollary}
\label{pf1.3}
Theorem \ref{iLie} holds.
\end{corollary}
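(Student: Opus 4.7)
The plan is to derive Theorem \ref{iLie} as a purely formal consequence of Theorem \ref{Lie}, by absorbing the additional factors (torsion ratios and divisible-Selmer ratios) into the stated error terms using the control results of Sections \ref{stors} and \ref{sdiv}.

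\emph{Part (1).} Apply the second assertion of Theorem \ref{Lie}, which states that the full product (Selmer-divisible quotient $\times$ $\smallshazero$ quotient $\times$ torsion quotient) is $p^{\mu l^{dn}+O(l^{(d-1)n})}$. The torsion ratio
\[
 \frac{|A(K_n)[p^\infty]||A^t(K_n)[p^\infty]|}{|A'(K_n)[p^\infty]||A'^t(K_n)[p^\infty]|}
\]
is bounded by $p^{O(1)}$ by Proposition \ref{torsstab}, and $O(1) \subset O(l^{(d-1)n})$ as soon as $d \ge 1$ (the case $d=0$ being vacuous). So moving the torsion to the opposite side leaves the stated asymptotic intact.

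\emph{Part (2).} Under the semistability/no-potentially-supersingular hypothesis, Theorem \ref{Lie}'s first assertion gives the exact polynomial
$p^{\,\mu l^{dn}+\mu_1 l^{(d-1)n}+\ldots+\mu_d}$ for the Selmer-Sha-torsion product. Divide both sides by the torsion quotient: by Proposition \ref{torsstab} it is $p^{O(1)}$, yielding the weaker form with $O(1)$ in place of $\mu_d$. If $A(K_n)[p^\infty]$ is bounded as $n\to\infty$, then by the second statement of Proposition \ref{torsstab} the torsion quotient is eventually constant, and absorbing this constant into $\mu_d$ produces an honest rational constant, as required.

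\emph{Part (3).} Here we want to drop the divisible-Selmer factor. The key input is Corollary \ref{rkpcor}, which gives
\[
\ord_p\frac{|\Sel^{\div}_{p^\infty}(A/K_n)[\phi]|}{|\Sel^{\div}_{p^\infty}(A'^t\!/K_n)[\phi^t]|}
 \;=\; O(\rk_p A/K_n).
\]
Under the hypothesis $\rk_p A/K_n=O(l^{(d-1)n})$ of case (1), this Selmer-divisible ratio is $p^{O(l^{(d-1)n})}$, which is absorbed into the error in (1). Under the hypothesis $\rk_p A/K_n=O(1)$ of case (2), it is $p^{O(1)}$, which is absorbed into the $O(1)$ error of (2); note that the polynomial part $\mu l^{dn}+\mu_1 l^{(d-1)n}+\cdots+\mu_{d-1}l^n$ is unaffected, since we are only subtracting a bounded quantity from the exponent. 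Either way, the quotient $\smash{\frac{|\smallshazero_{A/K_n}[p^\infty]|}{|\smallshazero_{A'\!/K_n}[p^\infty]|}}$ alone satisfies the same asymptotic.

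The main obstacle is really not analytic but combinatorial: to keep the polynomial terms $\mu_i l^{(d-i)n}$ in case (2) genuinely meaningful, one must notice that the divisible-Selmer correction is only $O(1)$ (not merely $O(l^{(d-1)n})$), which is exactly where the stronger hypothesis $\rk_p A/K_n=O(1)$ in (3) is needed, as opposed to the weaker hypothesis in case (1). Once this bookkeeping is set up, Theorem \ref{Lie} does all the work.
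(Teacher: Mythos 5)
Your overall strategy — read off parts (1) and (2) from Theorem~\ref{Lie} and then peel off the torsion and divisible-Selmer quotients via Proposition~\ref{torsstab} and Corollary~\ref{rkpcor} — is exactly the paper's route. But there are two issues, one cosmetic and one substantive.

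The cosmetic one: in your Part~(1) you assert that the second displayed formula of Theorem~\ref{Lie} contains the torsion quotient and then you "move it to the other side." It does not; the second assertion of Theorem~\ref{Lie} is word-for-word the statement of Theorem~\ref{iLie}(1), so no manipulation is needed. Your division by the torsion quotient happens to land on the same asymptotic (since it is $p^{O(1)}\subset p^{O(l^{(d-1)n})}$), so your conclusion is right, but your reading of the theorem is wrong.

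The substantive gap is in Part~(3). Theorem~\ref{iLie}(3) says \emph{all} conclusions of~(2) hold for $\smallshazero$ alone, including the final clause of~(2): "if $A(K_n)[p^\infty]$ is bounded, $O(1)$ may be replaced by a constant $\mu_d\in\Q$." To carry that exact-constant statement over to the $\smallshazero$-quotient you must show that the divisible-Selmer factor
\[
\frac{|\Sel^{\div}_{p^\infty}(A/K_n)[\phi]|}{|\Sel^{\div}_{p^\infty}(A'^t/K_n)[\phi^t]|}
\]
is \emph{eventually constant}, not merely $p^{O(1)}$; dividing by a factor that is only known to be bounded destroys the precise constant $\mu_d$. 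Corollary~\ref{rkpcor} gives only a bound and cannot produce stabilisation. What you need is Theorem~\ref{divselstab}(2) — applicable here because both $\rk_p A/K_n$ and $A(K_n)[p^\infty]$ are bounded — applied to $\phi$ and to $\phi^t$, which shows each of $|\Sel^{\div}_{p^\infty}(A/K_n)[\phi]|$ and $|\Sel^{\div}_{p^\infty}(A'^t/K_n)[\phi^t]|$ is eventually constant. You cite this lemma nowhere, so that portion of (3) is unproved in your argument.

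Everything else — the bookkeeping of $O(1)$ versus $O(l^{(d-1)n})$ in Part~(3), and the use of the second half of Proposition~\ref{torsstab} in Part~(2) — is correct and matches the paper.
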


\begin{proof}
(1) This is Theorem \ref{Lie} (2).

(2) Combine Theorem \ref{Lie} (1) and Proposition \ref{torsstab}.

(3) 
If $\rk_p A/K_n\!=\!O(l^{(d-1)n})$, then the divisible Selmer quotient
is $p^{\scriptscriptstyle O(l^{(d-1)n})}$ by Corollary \ref{rkpcor}, and the torsion
quotient is $p^{O(1)}$ by Proposition \ref{torsstab}. 
Moreover, if $\rk_p A/K_n$ and $A(K_n)[p^\infty]$ are bounded, then the torsion 
quotients stabilize by Proposition \ref{torsstab}, 
as does the divisible Selmer quotient by \linebreak Theorem \ref{divselstab}(2)
applied to $\phi$ and to $\phi^t$. 
This gives the results for $\shazero$ .
\end{proof}

\section{Appendix. Conductors of elliptic curves in extensions}
\label{sapp}

In this appendix we give bounds on the growth of conductors of 
elliptic curves in extensions of local fields. These results are independent
of the rest of the paper, and do not rely on the presence of an isogeny.

Let $\K$ be a finite extension of $\Q_l$, and $I_\K$ the inertia
subgroup of $\Gal(\bar \K/\K)$. We will be interested in 
continuous representations $\chi: I_\K\to\GL_d(\C)$,
in other words those with finite image.
We write $I^n$ for the upper ramification groups of $I_\K$.
We denote by $f_\chi, f_E, ...$ the conductor exponent of a representation, 
an elliptic curve etc.

\begin{notation}
For an irreducible continuous representation $\chi\!\ne\!\triv$ of $I_\K$, set
$$
  m_\chi=\max_{i\ge 1}\bigl\{i \bigm| \chi(I^i)\ne\id\bigr\}.
$$
We set $m_{\triv}=-1$.
\end{notation}

\begin{lemma}
\label{lemmchi}
Let $\chi$ and $\rho$ be irreducible continuous representations 
of $I_\K$.
\begin{enumerate}
\item 
$
  m_\chi=\frac{f_\chi}{\dim\chi}-1.
$
\item
$\chi$ factors through $I_\K/I^n$ if and only if $m_\chi<n$.
\item If $m_\rho<m_\chi$, 
  then $f_{\rho\tensor\chi}=f_\chi\dim\rho$.
\item If $m_\rho=m_\chi$, then 
  $f_{\rho\tensor\chi}\le f_\chi\dim\rho=f_\rho\dim\chi$.
\end{enumerate}
\end{lemma}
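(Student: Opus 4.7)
The common engine is the integral formula for the Artin conductor via upper ramification,
$$
  f_\pi \;=\; \int_{-1}^{\infty}\codim \pi^{I^u}\,du
$$
(with $I^u=I_\K$ for $u\in[-1,0]$), combined with the fact that for an irreducible representation $\pi$ of $I_\K$, the subspace $\pi^{I^u}$ is an $I_\K$-subrepresentation (because $I^u$ is normal in $I_\K$), hence equals $0$ or $\pi$ by Schur. Explicitly, $\pi^{I^u}=\pi$ if $\pi(I^u)=\id$ and $\pi^{I^u}=0$ otherwise. For a non-trivial irreducible $\chi$, this identifies the integrand as a step function: $\dim\chi$ on $[-1,m_\chi]$ and $0$ on $(m_\chi,\infty)$. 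Integrating gives $f_\chi=(m_\chi+1)\dim\chi$, which is (1). Part (2) is then immediate from the definition of $m_\chi$, since $\chi$ factors through $I_\K/I^n$ iff $\chi(I^n)=\id$ iff every $i\ge n$ satisfies $\chi(I^i)=\id$ iff $m_\chi<n$.

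For (3) and (4), the plan is to decompose $\rho\otimes\chi=\bigoplus_j\pi_j$ into irreducibles and compute $m_{\pi_j}$. The key observation is that since $\rho$ is trivial on $I^{m_\rho+1}$ and $\chi$ is trivial on $I^{m_\chi+1}$, the tensor product $\rho\otimes\chi$ is trivial on $I^{\max(m_\rho,m_\chi)+1}$, hence each $m_{\pi_j}\le \max(m_\rho,m_\chi)$. This already yields (4): summing $f_{\pi_j}=(m_{\pi_j}+1)\dim\pi_j\le(m_\chi+1)\dim\pi_j$ over $j$ gives $f_{\rho\otimes\chi}\le (m_\chi+1)\dim\rho\dim\chi=f_\chi\dim\rho$, with the symmetric bound $f_\rho\dim\chi$ equal to this since $m_\rho=m_\chi$.

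For (3), where $m_\rho<m_\chi$, I need the reverse inequality $m_{\pi_j}\ge m_\chi$ for every $j$. Here I exploit that $\rho|_{I^{m_\chi}}$ is trivial (as $I^{m_\chi}\subseteq I^{m_\rho+1}$), so as an $I^{m_\chi}$-representation, $\rho\otimes\chi\cong\chi^{\oplus\dim\rho}$. Taking $I^{m_\chi}$-invariants yields $(\rho\otimes\chi)^{I^{m_\chi}}=(\chi^{I^{m_\chi}})^{\oplus\dim\rho}=0$, since $\chi^{I^{m_\chi}}=0$ by the dichotomy used in (1). If any $\pi_j$ were trivial on $I^{m_\chi}$, it would inject into this zero space, a contradiction. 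Hence $m_{\pi_j}=m_\chi$ for all $j$, and summing $f_{\pi_j}=(m_\chi+1)\dim\pi_j$ gives the desired equality $f_{\rho\otimes\chi}=f_\chi\dim\rho$.

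The one delicate point I expect is (1): making the integral formula precise when the breaks of $I^u$ do not occur at integers (i.e. the interaction between the integer-valued quantity $m_\chi$ and the continuous upper-numbering filtration). This reduces to checking that the break of an irreducible $\chi$ lies at the integer $m_\chi$ on the nose, which follows from Schur together with the fact that the $I^u$ for $u\in(m_\chi,m_\chi+1)$ still lie inside the kernel of $\chi$ (since they are contained in $I^{m_\chi+1}$). Everything else is bookkeeping with the step-function integrand.
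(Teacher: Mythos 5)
Your proof is correct and follows essentially the same route as the paper's: parts (3) and (4) use the identical Schur-lemma dichotomy ($\tau^{I^u}$ is $0$ or all of $\tau$ because $I^u\normal I_\K$) to pin down $m_\tau$ for the irreducible constituents $\tau$ of $\rho\otimes\chi$ and then sum the resulting conductor formula. The only difference is cosmetic: for (1) the paper simply cites \cite{SerL} Ch.~6, \S 2, Exc.~2, whereas you spell out the underlying argument via the integral expression for the Artin conductor and the same dichotomy.
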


\begin{proof}
(1) See \cite{SerL} Ch. 6, \S2, Exc.~2. 
(This is formulated in \cite{SerL} on the level of finite extensions, but
the definition extends directly to the whole of $I_\K$, since upper 
ramification groups behave well under quotients.)

(2) Clear.

(3) By (2), the group $I^{m_\chi}$ acts trivially on 
$\rho$ and non-trivially on $\chi$. As $I^{m_\chi}\normal I_K$
and $\chi$ is irreducible, $\chi$ has no invariants under $I^{m_\chi}$. 
Therefore $\rho\tensor\chi$
has no invariants either, and so each irreducible constituent $\tau$ of 
$\rho\tensor\chi$ has $m_\tau\ge m_\chi$. On the other hand, clearly $I^m$
acts trivially on $\rho\tensor\chi$ for $m>m_\chi$, so $m_\tau=m_\chi$
and $f_\tau=\frac{f_\chi}{\dim\chi}\dim\tau$ by (1). 
Taking the sum over $\tau$, we deduce that
$$
  f_{\rho\tensor\chi}=f_\chi\frac{\dim(\chi\tensor\rho)}{\dim\chi}=f_\chi\dim\rho.
$$

(4) As in (3), let $\tau$ be an irreducible constituent of $\rho\tensor\chi$.
For $m>m_\chi$, the group $I^m$ acts trivially 
on $\chi$ and on $\rho$, and therefore on $\tau$. Therefore, by (2), 
$m_\tau<m$, and taking the limit $m\to m_\chi$ gives $m_\tau\le m_\chi$.
By (1), $f_{\tau}\le\frac{f_\chi}{\dim\chi}\dim\tau$, and taking the sum 
over $\tau$ as in (3) gives the claim.
\end{proof}

\begin{theorem}
\label{condmain}
Let $\K$ be an $l$-adic field, and $\rho: I_\K\to\GL_d(\C)$ a 
non-trivial continuous irreducible representation. 
Take $n>\frac{f_\rho}{\dim\rho}-1$, and write
$N=I^n$ for the $n$th ramification subgroup (in the upper numbering)
of $I_\K$.
Then for every finite extension $\F/\K$, 
$$
  f_{\Res_\F\rho} \>=\> f_{\Res_{\F^N}\rho} \>\le\> e_{\F^N/\K} (f_\rho-d)+d.
$$
\end{theorem}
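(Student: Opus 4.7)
The plan is to combine the factorisation from Lemma \ref{lemmchi} with a direct analysis of the Artin conductor inside a finite Galois realisation, followed by a Swan conductor estimate.

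Since $n > f_\rho/\dim\rho - 1 = m_\rho$ by Lemma \ref{lemmchi}(1), part (2) of that lemma gives $\rho|_N = \mathrm{id}$, so $\rho$ factors through $I_\K/N$. In particular $N\subseteq I_{\F^N}$, and $\Res_{\F^N}\rho$ also factors through the quotient by $N$.

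For the equality $f_{\Res_\F\rho} = f_{\Res_{\F^N}\rho}$, I would choose a finite Galois extension $L/\K$ containing $\F$ through which $\rho$ factors, and set $G = \Gal(L/\K)$, $H = \Gal(L/\F)$ and $H' = \Gal(L/\F^N) = H\cdot G^n$, where $G^n$ is the image of $N$, i.e.\ the $n$th upper ramification subgroup of $G$. Both conductors are then expressed by the lower-numbering Artin formula $f(\rho|_H) = \sum_{i\ge 0}\dim(V/V^{H_i})/[H_0:H_i]$ with $H_i = H\cap G_i$, and analogously for $H'_i = H'\cap G_i$. For indices $i$ with $G^n\subseteq G_i$, equivalently $i\le\psi_G(n)$ where $\psi_G$ is the Herbrand function of $L/\K$, the modular law combined with the normality of $G^n$ in $G$ gives $H'_i = H_i\cdot G^n$; the triviality of $\rho$ on $G^n$ then forces $V^{H'_i} = V^{H_i}$, while $|H'_i/H_i| = |G^n/(G^n\cap H)|$ is independent of $i$, so $[H'_0:H'_i] = [H_0:H_i]$. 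For $i > \psi_G(n)$, $G_i\subseteq G^n$ acts trivially on $V$ and both terms vanish. Hence the two sums agree term by term.

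For the bound $f_{\Res_{\F^N}\rho}\le e_{\F^N/\K}(f_\rho-d)+d$, I would pass via Swan conductors. As $\rho$ is irreducible and non-trivial on $I_\K$, it has no $I_\K$-invariants, and its Swan conductor is $\mathrm{Sw}(\rho) = f_\rho - d$. Using the upper-numbering integral $\mathrm{Sw}(\sigma) = \int_0^\infty\dim(V/V^{I^u})\,du$ together with the Herbrand relation $I_{\F^N}^u = I_{\F^N}\cap I_\K^{\phi(u)}$ (for $\phi = \phi_{\F^N/\K}$, reducing to a Galois closure if needed), the inclusion $V^{I_{\F^N}^u}\supseteq V^{I_\K^{\phi(u)}}$ and the change of variables $v = \phi(u)$, combined with the slope bound $\psi'(v)\le e_{\F^N/\K}$, give $\mathrm{Sw}(\Res_{\F^N}\rho)\le e_{\F^N/\K}(f_\rho-d)$. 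The passage from Swan to Artin costs at most $\dim V - \dim V^{I_{\F^N}}\le d$, completing the estimate.

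The principal technical difficulty is the interplay between upper numbering (in which $N$ and $G^n$ are defined) and lower numbering (in which the Artin conductor sum is natural): the modular-law step in the equality operates on exactly the range $i\le\psi_G(n)$ dictated by the Herbrand function, and the slope bound in the inequality is likewise a Herbrand calculation. Reducing the possibly non-Galois extension $\F/\K$ to its Galois closure, and checking that $\F^N$ is compatible with this reduction, is routine but requires care.
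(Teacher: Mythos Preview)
Your argument is correct and takes a genuinely different route from the paper's. The paper proves both assertions via the conductor--discriminant formula: it writes
\[
  f_{\Res_\F\rho}\;=\;\sum_{\tau}\bigl(f_{\rho\otimes\tau}-d\,f_\tau\bigr)
\]
over the irreducible constituents $\tau$ of $\Ind_{\F/\K}\triv$, and then applies Lemma~\ref{lemmchi}(3) to kill the terms with $m_\tau\ge n>m_\rho$ (yielding the equality with $\F^N$), and Lemma~\ref{lemmchi}(3),(4) together with $\dim\tau\le f_\tau$ to bound the remaining terms (yielding the inequality). Your approach bypasses the tensor-product conductor lemma entirely: for the equality you compare the lower-numbering Artin sums for $H$ and $H'=H\cdot G^n$ term by term using the modular law, and for the inequality you use the upper-numbering Swan integral $\mathrm{Sw}(\sigma)=\int_0^\infty\codim V^{I^u}\,du$, the Herbrand relation $H'^u=H'\cap G^{\phi_{\F^N/\K}(u)}$, and the slope bound $\psi'_{\F^N/\K}\le e_{\F^N/\K}$.

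The paper's method is more representation-theoretic and makes Lemma~\ref{lemmchi} do the work; yours is more hands-on and needs only Herbrand's theorem and the basic Artin/Swan formulae. One technical point worth flagging in your write-up: since $\rho$ is given only on $I_\K$, the phrase ``$\rho$ factors through $\Gal(L/\K)$'' should be read as ``$\rho$ is trivial on $I_L$'', so that $\rho$ descends to a representation of the inertia subgroup $G_0\subset G$; the conductor computations then take place entirely inside $G_0$ and $H_0$, which is all that is needed.
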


\begin{proof}
Because conductors and the upper numbering remain unchanged 
in unramified extensions, 
we may assume that $\F/\K$ is totally ramified. 
Decompose $\Ind_{\F/\K}\triv=\bigoplus_{\tau\in J}\tau$, with $\tau$ irreducible.
By the conductor-discriminant formula,
$$
\begin{array}{llllll}
  f_{\Res_\F\rho}  
       &=& f_{\Ind_{\F/\K}\Res_\F\rho} -  d\,v_\K(\Delta_{\F/\K}) \\[4pt]
       &=& \displaystyle f_{\rho\tensor\Ind_{\F/\K}\triv} -  d\,f_{\Ind_{\F/\K}\triv} 
       =\sum_{\tau\in J} f_{\rho\tensor\tau} - d\,f_{\tau}.\cr
\end{array}
$$
Let $J_N\subset J$ be the (multi-)set of $\tau$ which factor through $I_\K/N$.
Then
$$
  f_{\Res_{\F^N}\rho} =   \sum_{\tau\in\smash{J_N}} f_{\rho\tensor\tau} - d\,f_{\tau},
$$
by the same argument. However, $m_\tau\!\ge\! n\!>\!m_\rho$ for $\tau\notin J_N$ by 
Lemma \ref{lemmchi}(2), 
and so the terms $f_{\rho\tensor\tau} - d\,f_{\tau}$ are 0 by Lemma \ref{lemmchi}(3)
for such $\tau$. This proves the equality $f_{\Res_\F\rho} = f_{\Res_{\F^N}\rho}$.

For the inequality, first observe that by the argument above,
$$
  f_{\Res_\F\rho}  =       
    \sum_{\smash{J_\rho}} (f_{\rho\tensor\tau} - d\,f_{\tau}),
$$
where $J_\rho\subset J_N$ is the set of $\tau\in J$ for which $m_\tau\le m_\rho$.
By Lemma \ref{lemmchi}(3),(4), noting that 
$\dim\tau\le f_\tau$ for every $\tau\ne\triv$, we have
$$
\begin{array}{llllll}
  f_{\Res_\F\rho}  
       &=& \displaystyle
       \sum_{\tau\in J_\rho} (f_{\rho\tensor\tau} \!-\! d\,f_{\tau}) \le
       \sum_{\tau\in J_\rho} (f_\rho\dim\tau  \!-\! d\,f_{\tau})        \cr
       &\le& \displaystyle
       f_\rho + \sum_{\tau\in J_\rho\setminus\{\triv\}} (f_\rho\dim\tau  - d\,\dim\tau) \cr
       &=& \displaystyle
         f_\rho + (f_\rho-d)\bigl(\bigl(\sum_{\tau\in J_\rho}\dim\tau\bigr)-1\bigr) \cr
       &\le& \displaystyle
         f_\rho + (f_\rho-d)\bigl(\bigl(\sum_{\tau\in J_N}\dim\tau\bigr)-1\bigr) \cr
       &=& f_\rho + (f_\rho-d)([\F^N:\K]-1) 
       = d + (f_\rho-d)[\F^N:\K],
\end{array}
$$
which gives the claim, as we assumed that $\F/\K$ is totally ramified.
\end{proof}

\begin{theorem}
\label{ellcond}
Let $\K$ be an $l$-adic field, and $E/\K$ an elliptic curve with
additive reduction.
Take $n>\frac{f_E}{2}-1$ and write
$N=I^n$ for the $n$th ramification subgroup (in the upper numbering)
of $\Gal(\bar\K/\K)$.
Then for every finite extension $\F/\K$, 
$$
  f_{E/\F} \>=\> f_{E/\F^N} \>\le\> e_{\F^N/\K} (f_{E/\K}-2)+2.
$$
\end{theorem}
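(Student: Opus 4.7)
The plan is to reduce Theorem \ref{ellcond} to Theorem \ref{condmain} via the $\ell$-adic Tate module of $E$ for an auxiliary prime $\ell\ne l$. Writing $V=V_\ell(E)$, one has $f_{E/\F}=f(V|_{G_\F})$ as Artin conductors. First I reduce to the case $\F/\K$ totally ramified, since conductors and upper ramification are unchanged in unramified extensions. Then I split into two cases according to the reduction type of $E$.

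If $E$ has potentially good reduction, then $V|_{I_\K}$ has finite image, since $E$ acquires good reduction over some finite extension. Viewing it as a $2$-dimensional complex representation $\rho$, one has $f_\rho=f_E$ and $f(\rho|_{I_\F})=f_{E/\F}$. If $\rho$ is irreducible, Theorem \ref{condmain} with $d=2$ applies directly: the hypothesis $n>f_\rho/2-1=f_E/2-1$ is exactly the one given. If $\rho$ is reducible, it is semisimple (finite-image representations in characteristic $0$ are semisimple), so $\rho=\chi_1\oplus\chi_2$. The determinant $\det V|_{I_\K}$ is the $\ell$-adic cyclotomic character restricted to $I_\K$, which is trivial since $\K(\mu_{\ell^\infty})/\K$ is unramified when $\ell\ne l$; hence $\chi_2=\chi_1^{-1}$. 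Both characters are non-trivial because additive reduction forces $V^{I_\K}=0$, so $f_{\chi_i}=f_E/2$ and $m_{\chi_i}=f_E/2-1<n$. Applying Theorem \ref{condmain} to each character with $d=1$ and summing gives the bound and equality.

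If $E$ has potentially multiplicative reduction, there is a ramified quadratic character $\chi$ of $G_\K$ with $E^\chi$ of multiplicative reduction over $\K$; ramified because $E/\K$ is additive rather than multiplicative. Then $V_\ell(E^\chi)|_{I_\K}$ is a non-trivial unipotent extension of $\triv$ by $\triv$, so $V|_{I_\K}=V_\ell(E^\chi)|_{I_\K}\otimes\chi|_{I_\K}$ is a non-trivial extension of $\chi|_{I_\K}$ by itself. A direct computation (not via semisimplification, which would miss the unipotent contribution) gives $V^{I_\K}=0$, so $\epsilon=2$; and since the wild inertia $P_\K$ acts trivially on $V_\ell(E^\chi)$ (multiplicative reduction is tame), the Swan conductor of $V|_{P_\K}\cong \chi|_{P_\K}^{\oplus 2}$ is $2\delta_\chi$, giving $f_E=2+2\delta_\chi=2f_\chi$. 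If $\chi|_{I_\F}$ remains non-trivial, the same computation over $\F$ gives $f_{E/\F}=2\,f(\chi|_{I_\F})$, and Theorem \ref{condmain} applied to $\chi$ (with $n>f_\chi-1=m_\chi$) concludes. If $\chi|_{G_\F}$ is trivial, then $\F$ contains the quadratic extension cut out by $\chi$, over which $E$ has multiplicative reduction, so $f_{E/\F}=1$; since $N=I^n\subset\ker\chi$, also $\F^N=\F\cap\bar\K^N$ contains this quadratic extension, so $f_{E/\F^N}=1$, and $1\le e_{\F^N/\K}(f_E-2)+2$ holds trivially.

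The main obstacle is the case analysis itself, and particularly the potentially multiplicative case: one must compute the conductor directly rather than by semisimplifying (which would undercount the contribution of the non-trivial unipotent extension), and must handle separately the branch in which $\chi$ becomes trivial over $\F$. The essential coincidence making everything work is that in each case the controlling quantity, $f_\chi$ for a character or $f_\rho/\dim\rho$ for the irreducible $2$-dimensional $\rho$, equals $f_E/2$; so the hypothesis $n>f_E/2-1$ of Theorem \ref{ellcond} is exactly what is needed to invoke Theorem \ref{condmain}.
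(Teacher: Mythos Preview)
Your proof is correct and follows essentially the same strategy as the paper: pass to the $2$-dimensional Galois representation attached to $E$, apply Theorem~\ref{condmain} directly when it is irreducible, and in the reducible case apply it to each character separately (using that $\det$ is unramified to get $f_{\chi_i}=f_E/2$, so the hypothesis $n>f_E/2-1$ matches).

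The only notable difference is in the additive potentially multiplicative case. You work directly with the quadratic twist character $\chi$ and split on whether it survives over $\F$. The paper instead uses the semisimplification $\rho$ throughout and the observation that $f_E=f_\rho$ over any field where $E$ is additive; this makes the argument uniform, since once $f_{\Res_\F\rho}=f_{\Res_{\F^N}\rho}$ is known, one only needs to check $E$ remains additive over $\F$ (which follows from $f_{\Res_\F\rho}\ge 2$). Your worry that semisimplification ``would miss the unipotent contribution'' is unfounded here: the monodromy $N$ only alters the conductor through $\ker(N|_{V^I})$, and in the additive case $V^I=0$, so $f_E=f_{\rho^{\mathrm{ss}}}$ exactly. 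Your hands-on computation via $\chi$ is perfectly valid, just slightly longer.
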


\begin{proof}
Write $\L=\F^N$.
Let $V$ be the Weil-Deligne representation associated to the first \'etale
cohomology of $E/\K$, and $\rho$ its `Weil part' 
(i.e. the semisimplification of $H_{\text{\'et}}^1(E,\Q_\ell)\tensor\C$ for some $\ell\ne l$). 

Either $\rho$ is irreducible or it is a sum of two one-dimensional characters 
$\psi_1, \psi_2$ of the same conductor (because $\psi_1\psi_2=\det \rho$ is
the cyclotomic character, which is unramified). 
The conductor depends only on the action of the inertia group, so, 
applying Theorem \ref{condmain} either to $\rho$ directly or to the $\psi_i$,
we find that $f_{\Res_\F\rho}=f_{\Res_\L\rho}$. 

Now, if $E/\K$ has potentially good reduction, 
then $V$ is a Weil representatation and $f_E=f_\rho$, 
so the result follows from Theorem \ref{condmain}.
If $E/\L$ has multiplicative reduction, then $f_{E/\F}\!=\!f_{E/\L}\!=\!1$,
and the result again follows.

Finally, suppose $E/\L$ has additive potentially multiplicative reduction.
By Theorem \ref{condmain}, 
$$
  f_{\Res_\F\rho} \>=\> f_{\Res_{\L}\rho} \>\le\> e_{\L/\K} (f_\rho-2)+2.
$$
Over any field where $E$ has additive reduction, $f_E=f_\rho\ge 2$.
This fact over $\L$ together with the formula shows that $f_{\Res_\F\rho}\ge 2$,
in particular $E$ has additive reduction over $\F$.
Thus $E$ has additive reduction over all three fields, and the 
formula then translates to the claim in the theorem.
%
%
\end{proof}

\begin{corollary}
\label{condell}
Let $\K$ be an $l$-adic field, and $\K^\infty/\K$ a possibly infinite
Galois extension. Let $E/\K$ be an elliptic curve with additive reduction.
Suppose that for some $n\!>\!\frac{f_E}{2}\!-\!1$,
the upper ramification subgroup $I^n_{\K_\infty/\K}$ of the inertia group of $\K^\infty/\K$ 
has finite index $e$.
Then for every finite extension $\L$ of $\K$ in $\K^\infty$,
$$
  f_{E/\L} \le e (f_{E/\K}-2)+2.
$$
\end{corollary}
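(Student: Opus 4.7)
The plan is to reduce the corollary to Theorem \ref{ellcond} applied directly to $\L/\K$, and then bound the ramification index appearing there by the hypothesized index $e$ via the compatibility of upper numbering with quotients.

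First, let $N = I^n$ denote the $n$th upper ramification subgroup of $\Gal(\bar\K/\K)$. Since $n > \tfrac{f_E}{2} - 1$ and $E/\K$ has additive reduction, Theorem~\ref{ellcond} applied to the finite extension $\L/\K$ gives
$$
  f_{E/\L} \>=\> f_{E/\L^N} \>\le\> e_{\L^N/\K}(f_{E/\K}-2) + 2,
$$
where $\L^N = \L \cap \bar\K^N$. Thus the whole problem reduces to showing that $e_{\L^N/\K} \le e$.

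To establish this, I would invoke Herbrand's theorem, which says that upper ramification filtrations are compatible with quotients. Concretely, under the surjection $\Gal(\bar\K/\K) \twoheadrightarrow \Gal(\K^\infty/\K)$, the image of $N$ is exactly the upper ramification subgroup $I^n_{\K^\infty/\K}$, which by hypothesis has index $e$ in the inertia subgroup of $\Gal(\K^\infty/\K)$. Let $M \subseteq \K^\infty$ be the fixed field of $I^n_{\K^\infty/\K}$; then $M = \K^\infty \cap \bar\K^N$, and $M/\K$ has ramification degree exactly $e$.

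Finally, since $\L \subseteq \K^\infty$, we have
$$
  \L^N \>=\> \L \cap \bar\K^N \>=\> \L \cap M,
$$
so $e_{\L^N/\K} \le e_{M/\K} = e$, and substituting into the bound from Theorem~\ref{ellcond} yields the claim. The only real subtlety is the Herbrand compatibility step—being careful that ``index $e$'' in the statement really corresponds to ramification degree $e$ of the fixed field—but this is standard, and once it is in place, the rest is formal.
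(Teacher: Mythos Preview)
Your argument is correct and matches the paper's approach: apply Theorem~\ref{ellcond} to $\L/\K$, then bound $e_{\L^N/\K}$ by $e$ via Herbrand's compatibility of upper numbering with quotients, identifying $\L^N$ with $\L\cap(\K^\infty)^{I^n_{\K_\infty/\K}}$. The paper's proof is the same idea compressed to a single line.
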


\begin{proof}
Clear, since $e_{\L^{I_n}/\K}=e_{\L^I/\K}\le e$, where $I=I^n_{\K_\infty/\K}$.
\end{proof}

\begin{corollary}
\label{corliestab}
Let $\K$ be an $l$-adic field, and $\K^\infty/\K$ a Galois extension 
whose Galois group is an $l$-adic Lie group. 
Then there is a constant $C>0$ such that every elliptic curve $E/\K$ 
has conductor exponent $f_{E/\L}\le C$ over all finite extensions $\L$ 
of $\K$ in $\K^\infty$.
Moreover, if $\K^\infty=\bigcup_m \K_m$ with $\K_m/\K$ finite Galois,
then $f_{E/\K_m}$ stabilises as $m\to\infty$.
\end{corollary}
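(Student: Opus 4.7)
The plan is to deduce both assertions from Corollary \ref{condell} combined with Sen's theorem (\cite{Sen} \S4), which guarantees that upper ramification subgroups of an $l$-adic Lie group have finite index.

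For the \emph{uniform bound}, the key input is the classical fact that conductor exponents of elliptic curves over a fixed $l$-adic field $\K$ are uniformly bounded: there is an $M = M(\K)$ with $f_{E/\K} \le M$ for every elliptic curve $E/\K$. (For $l \ge 5$ one may take $M = 2$ since additive reduction is tame; for $l = 2, 3$ a bound of the form $2 + c\,v_\K(l)$ follows from analysing the Galois action on the Tate module, cf.\ \cite{LRS}.) Choosing any integer $n > M/2 - 1$, Sen's theorem applied to $\K^\infty/\K$ yields that the upper ramification subgroup $I^n_{\K^\infty/\K}$ of the inertia has finite index, call it $e$. Corollary \ref{condell} applied to each $E/\K$ with additive reduction then gives
\[
  f_{E/\L} \le e(f_{E/\K} - 2) + 2 \le e(M - 2) + 2
\]
uniformly in $E$ and in finite $\L \subseteq \K^\infty$; the good and multiplicative cases contribute $f_{E/\L}\le 1$ automatically, so some constant $C$ works.

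For \emph{stabilisation}, I fix $E$; the good and multiplicative cases are trivial, so assume $E/\K$ has additive reduction and pick $n > f_{E/\K}/2 - 1$. The idea is to set $\F := (\K^\infty)^{I^n_{\K^\infty/\K}}$, which is a finite extension of $\K$ by Sen's theorem, and choose $m_0$ with $\K_{m_0} \supseteq \F$. For every $m \ge m_0$, Theorem \ref{ellcond} applied to $\K_m/\K$ gives $f_{E/\K_m} = f_{E/\K_m^{I^n}}$, where $I^n \subseteq \Gal(\bar\K/\K)$ is the upper ramification subgroup. Since upper ramification commutes with quotients, the image of this $I^n$ in $\Gal(\K^\infty/\K)$ is precisely $I^n_{\K^\infty/\K}$, whose fixed field is $\F$. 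Hence $\K_m^{I^n} = \F \cap \K_m = \F$ for all $m \ge m_0$, so $f_{E/\K_m} = f_{E/\F}$ is constant.

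The main obstacle is the identification $\K_m^{I^n} = \F$ for large $m$, which rests on compatibility of upper ramification with quotients together with Sen's finite-index theorem; once this bookkeeping is in place, both statements fall out directly from the already-established Corollary \ref{condell}.
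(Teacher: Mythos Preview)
Your overall strategy matches the paper's: bound $f_{E/\K}$ uniformly (the paper cites \cite{Pap}, you cite \cite{LRS}; either works), then feed this into Sen's theorem and Corollary~\ref{condell}/Theorem~\ref{ellcond}. The uniform-bound half is fine.

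There is, however, a genuine gap in your stabilisation argument. You claim that $\F := (\K^\infty)^{I^n_{\K^\infty/\K}}$ is a \emph{finite} extension of $\K$ by Sen's theorem, and then pick $m_0$ with $\K_{m_0}\supseteq\F$. But Sen's theorem only gives that $I^n_{\K^\infty/\K}$ has finite index in the \emph{inertia} group $I_{\K^\infty/\K}$, not in all of $\Gal(\K^\infty/\K)$. If $\K^\infty/\K$ has an infinite unramified part (e.g.\ $\K^\infty$ contains the unramified $\Z_l$-extension of $\K$, which is perfectly allowed for an $l$-adic Lie group), then $[\Gal(\K^\infty/\K):I^n_{\K^\infty/\K}]=\infty$, so $\F/\K$ is infinite and no such $m_0$ exists. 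In that case the fields $\K_m^N=\K_m\cap\F$ genuinely keep growing with $m$.

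The fix is short. From Theorem~\ref{ellcond} you still get $f_{E/\K_m}=f_{E/\K_m^N}$, and what Sen's theorem \emph{does} give is that the ramification degrees $e_{\K_m^N/\K}=[I_{\K_m/\K}:I^n_{\K_m/\K}]$ are bounded by $e=[I_{\K^\infty/\K}:I^n_{\K^\infty/\K}]$. Since these ramification degrees form a nondecreasing sequence of integers bounded by $e$, they stabilise; once they do, each step $\K_{m+1}^N/\K_m^N$ is unramified and hence leaves the conductor unchanged. This is presumably what the paper's terse ``follows from Theorem~\ref{ellcond}'' intends.
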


\begin{proof}
This is clear for curves with good and multiplicative reduction.
If $E/\K$ has additive reduction, by \cite{Pap} Thm. 1 the 
conductor exponent $f_{E/\K}$ is at most $2$, $3v_\K(3)+2$ and $6v_\K(2)+2$ 
when $l\ge 5$, $l=3$ and $l=2$, respectively. 
By Sen's theorem (\cite{Sen} \S4, main Theorem), 
$I^n_{\K^\infty/\K}\normal I_{\K^\infty/\K}$ 
is of finite index for every~$n$, so the two claims follow from 
Corollary \ref{condell} and Theorem \ref{ellcond}.
\end{proof}

\end{document}